\newenvironment{proof}{\smallskip\noindent{{\it Proof.}}\hskip \labelsep}%
            {\hfill\penalty10000\raisebox{-.09em}{\large\bf\rm $\blacksquare$}\par\medskip}
\newtheorem{theorem}{Theorem}[section]
\newtheorem{lemma}[theorem]{Lemma}
\newtheorem{proposition}[theorem]{Proposition}
\newtheorem{corollary}[theorem]{Corollary}
\newtheorem{remark}{Remark}[section]
\def\bR{\mathbf{R}}
\def\bO{\mathbf{O}}
\def\bS{\mathbf{S}}
\definecolor{verd}{rgb}{0.1, 0.5, 0.1}
\begin{document}

\begin{frontmatter}

\title{Monotone cubic spline interpolation for functions with a strong gradient}\tnotetext[label1]{This research has been supported by Spanish MINECO projects MTM2017-83942-P.}

\author[UV1]{Francesc Ar\`andiga}
\ead{arandiga@uv.es}
\author[UV1]{Antonio Baeza}
\ead{antonio.baeza@uv.es}
\author[UV1]{Dionisio F. Y\'a\~nez}
\ead{dionisio.yanez@uv.es}
\date{Received: date / Accepted: date}

\address[UV1]{Departament de Matem\`atiques.  Universitat de Val\`encia (EG) (Spain)}

%The correct dates will be entered by the editor.

%\title[]{Monotone cubic Hermite interpolants with third-order accuracy}
%
%\author[Francesc Ar\`andiga]{Francesc Ar\`andiga}
%\address{Departament de Matem\`atica Aplicada.  Universitat de Val\`encia (EG) (Spain).}
%\email{arandiga@uv.es}
%\author[Dionisio F. Y\'a\~nez]{Dionisio F. Y\'a\~nez}
%\address{Departament de Did\`actica de la Matem\`atica.  Universitat de Val\`encia (EG) (Spain). }
%\email{dionisio.yanez@uv.es}
%
%\date{\today}

\begin{abstract}

Spline interpolation has been used in several applications
due to its favorable properties regarding smoothness and accuracy of the interpolant. However, when there exists a discontinuity or a steep gradient in the data, some
artifacts can appear due to the Gibbs phenomenon. Also, preservation of data monotonicity is a requirement in some applications, and that property is not automatically verified by the interpolator. In this paper, we study sufficient conditions to obtain monotone cubic
splines based on Hermite cubic interpolators and propose different ways to construct them using non-linear formulas. The order of approximation, in each case, is calculated and several numerical experiments are performed to contrast the theoretical results.
\end{abstract}

\begin{keyword}
Monotonicity, Cubic Hermite Interpolants, Cubic Spline Interpolants, Non-linear computation of derivatives.
\end{keyword}

\end{frontmatter}

\section{Introduction and review: Hermite cubic interpolation}

Approximation techniques are used in applications as design of curves, surfaces, robotics, creation of pieces in industry  and many others due to the fact that they present certain regularity  properties. In particular, Hermite interpolatory polynomials have been developed  to obtain  interpolants of class $C^1$ that have been
applied, for example, to the numerical solution of differential equations (see \cite{abushamabialecki,floater}). We consider the problem of piecewise cubic Hermite interpolation, that can be stated as follows:
let $x_{1}< x_{2} < \hdots< x_{n}$ be a partition of the interval
$[x_{1} ,x_{n}]$ and let $f_{i} = f(x_{i})$ be the values of a
certain function at the knots. Given approximate values of the first derivative of

 $f$ at the knots $\{x_{i}\}_{i=1}^n,$ denoted by $\{\dot{f}_{i}\}_{i=1}^n,$ construct a  piecewise cubic polynomial function $P(x),$ conformed by $n-1$
cubic polynomials $P_i(x)$  defined on the ranges $[x_i,x_{i+1}],$ that satisfy
\begin{equation}
\begin{array}{ll}
P(x_i)=f_i, & P(x_{i+1})=f_{i+1},\\
P{'}(x_i)=\dot{f}_i, & P{'}(x_{i+1})=\dot{f}_{i+1}.\\
\end{array}
\end{equation}

We will use the following notation: the undivided differences of a function $f$ are denoted by $\Delta f_{i} = f_{i+1} - f_{i}$ and
$m_{i} ={\Delta f_{i}}/{h_{i}}$ denotes its divided differences, where
 $h_{i} = x_{i+1} -x_{i}$ are the mesh spacings and $\hat{h}=\max_{i=1,\hdots,n-1} (h_i)$.
The $i$-th polynomial
$P_i(x),  x \in [x_{i},x_{i+1}]$ (see \cite{Boo01}
for details),
has the form
\begin{equation}\label{Hermite}
P_{i}(x) = c^i_{1} + c^i_{2} (x - x_{i}) + c^i_{3} (x-x_{i})^{2} +
c^i_{4}(x-x_{i})^{2}(x-x_{i+1}),
%\quad \forall x \in [x_{i},x_{i+1}],
\end{equation}
where:
\begin{equation}\label{Hermitecoef}
\begin{split}
 c^i_{1} &:= f_{i}, \ c^i_{2} := f[x_{i},x_{i}] = \dot{f}_{i}, \
 c^i_{3} := f[x_{i},x_{i}, x_{i+1}]
= ( {m_{i} -\dot{f}_{i}} )/ {h_{i}} \notag, \\
 c^i_{4} &:= f[x_{i},x_{i},x_{i+1},x_{i+1}] =
( {\dot{f}_{i+1} + \dot{f}_{i} - 2 m_{i}} ) / {h_{i}^{2}}.
\end{split}
\end{equation}

Hence, a procedure to compute $\{\dot{f}_{i}\}$ defines an algorithm
for constructing a cubic Hermite interpolant.

%The following lemma  states  the approximation
%properties of
%$P_i(x)$ in (\ref{Hermite}).
%
%\begin{lemma}  (\cite{Boo01})
%\label{le:1}
%Let us assume that $f(x)$ is smooth. If
%$\dot{f}_{i}=f^{'}(x_{i}) +O(h^{q_{i}})$ and
%$\dot{f}_{i+1}=f^{'}(x_{i+1}) +O(h^{q_{i+1}})$,
%then the piecewise cubic Hermite interpolant (\ref{Hermite})
%satisfies, on the interval $[x_i,x_{i+1}]$,
%\begin{equation}
%\label{eq:ordre}
%P_i(x)=f(x)+O(h^{q}) \quad \mbox{where} \quad q=\min (4, q_i+1, q_{i+1}+1).
%\end{equation}
%\end{lemma}
%As a consequence,  the ideal case
%is to compute third-order accurate approximations to the values
%$\{\dot{f}_{i}\}$.

In some problems, it is required that the interpolant employed preserves the monotonicity of the data.   This problem has been tackled in the literature (see, e.g., \cite{Hyman, WA02, Bica, cripshussain, arandigabaezayanez}) leading to several options for monotonic Hermite interpolation. In the remaining of this section we cite some  known results dealing with  conditions
for a cubic Hermite interpolants to be monotonicity preserving and about its accuracy.

%
%The  cubic Hermite interpolant (\ref{Hermite}) is
%monotone in the interval $[x_{i},x_{i+1}]$ if there is no
%sign change in
%its slope
% in the interval.
%Therefore, a necessary condition for monotonicity is
\begin{theorem}(Necessary conditions {for monotonicity}.)
\label{FrCarl}
Let $P_{i}$ be a monotone cubic Hermite interpolant
of the data $\{(x_{i},f_{i}, \dot f_i), (x_{i+1},f_{i+1},\dot f_{i+1})\}$. Then:
\begin{equation}\label{signos}
sign(\dot{f}_{i}) = sign(\dot{f}_{i+1}) = sign(m_{i}).
\end{equation}
Furthermore, if $m_{i} = 0$ then $P_{i}$ is monotone
(constant)
if and only if  $\dot{f}_{i} = \dot{f}_{i+1} = 0 $.
\end{theorem}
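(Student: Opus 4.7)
The plan is to use the fact that monotonicity of $P_i$ on $[x_i,x_{i+1}]$ is equivalent to $P_i'$ having constant (weak) sign on that interval, and to derive each of the three sign conditions by direct evaluation. By replacing $P_i$ with $-P_i$ if necessary, I may assume that $P_i$ is non-decreasing, so that $P_i'(x)\ge 0$ for every $x\in[x_i,x_{i+1}]$; the non-increasing case is symmetric.

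For the first claim, I would evaluate $P_i'$ at the two endpoints to conclude $\dot f_i = P_i'(x_i)\ge 0$ and $\dot f_{i+1}=P_i'(x_{i+1})\ge 0$. For $m_i$ I would argue either by the fundamental theorem of calculus, writing $f_{i+1}-f_i = \int_{x_i}^{x_{i+1}} P_i'(x)\,dx \ge 0$, or, equivalently, by noting that $P_i$ non-decreasing forces $f_{i+1}\ge f_i$, and hence $m_i\ge 0$. Since all three quantities are $\ge 0$, they share the same sign, which is the content of \eqref{signos}.

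For the second claim, assume first that $m_i=0$, which means $f_i=f_{i+1}$. If $P_i$ is monotone on $[x_i,x_{i+1}]$ and takes equal values at the endpoints, then $P_i$ must be constant on that interval; but $P_i$ is a polynomial, so it is identically $f_i$, giving $\dot f_i = \dot f_{i+1}=0$. Conversely, substituting $\dot f_i=\dot f_{i+1}=m_i=0$ into the formulas \eqref{Hermitecoef} yields $c^i_2=c^i_3=c^i_4=0$ and $c^i_1=f_i$, so $P_i\equiv f_i$ is (trivially) monotone.

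I do not foresee a real obstacle: the key observation is the equivalence between monotonicity of $P_i$ and the (weak) sign of $P_i'$ on the interval, and the rest reduces to evaluating $P_i'$ at the endpoints and inspecting the closed-form coefficients in \eqref{Hermitecoef}. The only point to state carefully is that equality of signs has to be understood in the weak sense (allowing zeros), since a monotone cubic can have a vanishing derivative at one endpoint while $m_i$ is strictly positive.
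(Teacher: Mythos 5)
Your proposal is correct. Note that the paper itself offers no proof of this theorem: it is quoted as a known result (it is Fritsch and Carlson's necessary condition), so there is no argument in the text to compare against. Your elementary derivation is the standard one and is complete: the equivalence between monotonicity of $P_i$ and a constant weak sign of $P_i'$, the endpoint evaluations $P_i'(x_i)=\dot f_i$, $P_i'(x_{i+1})=\dot f_{i+1}$, and the observation that a monotone function with equal endpoint values is constant (hence, being a polynomial, identically constant) together give both claims, and the converse direction for $m_i=0$ follows by inspecting the coefficients in \eqref{Hermitecoef}. Your closing caveat is also the right one and worth keeping: the literal equality of signs fails if $\mathrm{sign}(0)=0$ is enforced (e.g.\ the Hermite interpolant of $(0,0,0)$, $(1,1,3)$ is $x^3$, which is monotone with $\dot f_i=0$ and $m_i=1$), so \eqref{signos} must be read in the weak sense $\dot f_i\, m_i\ge 0$, $\dot f_{i+1}\, m_i\ge 0$, which is exactly how the condition is used later via $\alpha_i,\beta_i\ge 0$.
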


%if, for all $i$,  $P_i(x)$ is monotone between $f_i$ and $f_{i+1}$.
%That is, if  $f_{i}\leq P_{i}(x^{*}) \leq
%P_{i}(y^{*})\leq f_{i+1} $(or $f_{i}\geq P_{i}(x^{*}) \geq
%P_{i}(y^{*})\geq f_{i+1} $),
% $\forall x^{*},
%y^{*} \in [x_{i},x_{i+1}]$ such that $x^{*} \leq y^{*} $.
%
%In what follows we list a series of known results that specify monotonicity
%conditions for  cubic Hermite interpolants.
%The next sufficient condition for monotonicity is a generalization of an
%observation by de Boor and Swartz

\begin{theorem} (Sufficient conditions {for monotonicity}
  \cite{FrCar}.) \label{le:BoorSwa}
  Let $I_{i}=[x_{i},x_{i+1}]$ and $P_{i}$ be a cubic Hermite  interpolant
  of the
data $\{(x_{i},f_{i},\dot f_i), (x_{i+1},f_{i+1},\dot f_{i+1})\}$, and let
$
\alpha_{i} := {\dot{f}_{i}} / {m_{i}}$,  $\beta_{i} :=
{\dot{f}_{i+1}} / {m_{i}}$. %  $i= 1, \ldots,n-1$.
If

\begin{equation}
\label{eq:BoorSwa}
 0\leq {\alpha}_{i}, {\beta}_{i} \leq 3,
 % 0\leq \frac{\dot{f}_{i}}{m_i}, \frac{\dot{f}_{i+1}}{m_i} \leq 3
 % \mbox{ or }  3 m_{i} \leq \dot{f}_{i}, \dot{f}_{i+1} \leq 0,
\end{equation}
then the resulting cubic Hermite interpolant (\ref{Hermite}) is monotone on $I_i$.
\end{theorem}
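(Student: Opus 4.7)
The plan is to show that, under hypothesis (5), $P_i'(x)\ge 0$ throughout $I_i$, which implies that $P_i$ is monotone non-decreasing; the non-increasing case follows by replacing $f$ with $-f$, and the degenerate situation $m_i=0$ is already settled by Theorem~\ref{FrCarl}. So I assume $m_i>0$ and reduce everything to analyzing the sign of the quadratic $P_i'$ on $[x_i,x_{i+1}]$.

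First I would introduce the affine variable $t=(x-x_i)/h_i\in[0,1]$ and normalize by setting $g(t):=P_i'(x_i+t h_i)/m_i$. Differentiating (\ref{Hermite}) and using the explicit expressions for $c^i_2,c^i_3,c^i_4$ in (\ref{Hermitecoef}), the Hermite data translate into the three conditions $g(0)=\alpha_i$, $g(1)=\beta_i$, and $\int_0^1 g(t)\,dt = 1$ (this last one encoding $f_{i+1}-f_i=h_i m_i$). Writing the quadratic $g$ in the Bernstein basis of degree two then gives the compact form
\[
g(t)=\alpha_i(1-t)^2+2(3-\alpha_i-\beta_i)\,t(1-t)+\beta_i\,t^2,
\]
and the problem becomes: show $g\ge 0$ on $[0,1]$ for every $(\alpha_i,\beta_i)\in[0,3]^2$.

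Next I would split into two regimes. \textbf{Easy regime:} if $\alpha_i+\beta_i\le 3$, all three Bernstein coefficients are nonnegative and $g\ge0$ is immediate. \textbf{Hard regime:} if $\alpha_i+\beta_i>3$, then in particular $\alpha_i+\beta_i>2$, so the leading coefficient $3(\alpha_i+\beta_i-2)$ of $g$ is positive and $g$ is convex. I locate the vertex $t^{\ast}=(2\alpha_i+\beta_i-3)/\bigl(3(\alpha_i+\beta_i-2)\bigr)$. If $t^{\ast}\le 0$ or $t^{\ast}\ge 1$, then $g$ is monotone on $[0,1]$ and the endpoint values $g(0)=\alpha_i\ge 0$, $g(1)=\beta_i\ge 0$ already imply $g\ge 0$.

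The only genuinely delicate case is $t^{\ast}\in(0,1)$, which I expect to be the main obstacle. There one must estimate the actual minimum $g(t^{\ast})$. Using the vertex formula, the inequality $g(t^{\ast})\ge 0$ simplifies to the quadratic constraint
\[
\alpha_i^2+\alpha_i\beta_i+\beta_i^2-6(\alpha_i+\beta_i)+9 \le 0,
\]
i.e.\ $(\alpha_i,\beta_i)$ lies inside a certain ellipse that passes through the three corners $(3,0)$, $(0,3)$ and $(3,3)$ of the admissible square. To close the argument I would verify that the portion of $[0,3]^2$ on which the vertex actually falls in $(0,1)$ is contained in this ellipse, for instance by showing that the left-hand side of the displayed inequality is maximized on the boundary $\alpha_i=3$ or $\beta_i=3$, where it collapses to the elementary one-variable inequality $(\beta_i-3)^2\le 0$ (resp.\ $(\alpha_i-3)^2\le 0$) \emph{at} the corner and is strictly negative elsewhere, completing the proof.
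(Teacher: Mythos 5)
The paper itself gives no proof of this statement---it is quoted directly from Fritsch and Carlson \cite{FrCar}---so there is nothing in-paper to compare against; your argument is essentially the classical one and it is correct. I checked the formulas: with $g(t)=P_i'(x_i+th_i)/m_i$ one indeed gets $g(t)=\alpha_i+ (6-4\alpha_i-2\beta_i)t+3(\alpha_i+\beta_i-2)t^2$, which matches your Bernstein form $\alpha_i(1-t)^2+2(3-\alpha_i-\beta_i)t(1-t)+\beta_i t^2$ and satisfies $g(0)=\alpha_i$, $g(1)=\beta_i$, $\int_0^1 g=1$; the regime $\alpha_i+\beta_i\le 3$ is immediate from nonnegative Bernstein coefficients; and the vertex value condition $g(t^\ast)\ge 0$ does reduce to $E(\alpha_i,\beta_i):=\alpha_i^2+\alpha_i\beta_i+\beta_i^2-6(\alpha_i+\beta_i)+9\le 0$, which is exactly the second condition of Theorem~\ref{th:BoorSwaFrBut}. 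Two remarks on your closing step. First, a small slip: on the edge $\alpha_i=3$ one has $E(3,\beta_i)=\beta_i(\beta_i-3)$, not $(\beta_i-3)^2$; the latter is what you get on $\alpha_i=0$, which is irrelevant in the hard regime. Second, the containment you need can be closed more cleanly than by a boundary maximization: when $\alpha_i,\beta_i\ge 0$ and $\alpha_i+\beta_i>3$ one automatically has $2\alpha_i+\beta_i>3$ and $\alpha_i+2\beta_i>3$, so $t^\ast\in(0,1)$ always (your ``vertex outside $[0,1]$'' sub-case is vacuous there), and the hard regime is exactly the triangle with vertices $(3,0)$, $(0,3)$, $(3,3)$; since $E$ is a convex quadratic (its Hessian has eigenvalues $1$ and $3$) and vanishes at all three vertices, $E\le 0$ on the whole triangle by convexity. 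With that substitution the proof is complete and rigorous.
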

%\begin{figure}[!bpt]
%\begin{center}
%\includegraphics[width=4.5cm,height=4.5cm]{fcreg.eps}
%\end{center}
%\caption{The de Boor-Swartz box: B-region ($0\leq \alpha,\beta \leq 3$).
% $\alpha$ is the horizontal axis; and $\beta$,  the vertical one.}
%    \label{boorsbox}
% \end{figure}

%In Figure \ref{boorsbox} the conditions of Theorem \ref{le:BoorSwa} ($0\leq \alpha,\beta \leq 3$), called ``de Boor-Swartz box'', are show\toni{ed}n.

The following theorem provides a global result (see \cite{Huy93}).  This is a generalization of Theorem \ref{le:BoorSwa}.
\begin{theorem} (\cite{Huy93}) \label{BoorSwa}
With the notation of Theorem \ref{le:BoorSwa},  if for all $i$, $2\leq i\leq n-1$,
\begin{equation}
\label{eq:BoorSwa2}
|\dot{f}_{i} |\leq 3 \min (|m_{i-1}|,|m_{i}|)
\end{equation}
then (\ref{Hermite}) is monotone in each
$[x_{i},x_{i+1}]$, $2\leq i \leq n-2$.
\end{theorem}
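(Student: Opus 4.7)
The plan is to reduce the global statement to the local sufficient condition of Theorem~\ref{le:BoorSwa} applied separately on every interior interval $[x_i,x_{i+1}]$ with $2\le i\le n-2$. Fix such an interval; by Theorem~\ref{le:BoorSwa} it suffices to show that $0\le\alpha_i,\beta_i\le 3$, where $\alpha_i=\dot f_i/m_i$ and $\beta_i=\dot f_{i+1}/m_i$.

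First, I would dispose of the degenerate case $m_i=0$. The hypothesis \eqref{eq:BoorSwa2} applied at index $i$ yields $|\dot f_i|\le 3|m_i|=0$, and applied at index $i+1$ (which lies in $[2,n-1]$ because $i\le n-2$) yields $|\dot f_{i+1}|\le 3\min(|m_i|,|m_{i+1}|)=0$. Thus $\dot f_i=\dot f_{i+1}=0$ and Theorem~\ref{FrCarl} gives that $P_i$ is constant, hence trivially monotone on $[x_i,x_{i+1}]$.

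In the generic case $m_i\neq 0$, the main estimates follow at once by dividing by $|m_i|$: the inequality \eqref{eq:BoorSwa2} at index $i$ contains $|\dot f_i|\le 3|m_i|$ as one of the two bounds, so $|\alpha_i|\le 3$; likewise the inequality at index $i+1$ contains $|\dot f_{i+1}|\le 3|m_i|$, so $|\beta_i|\le 3$. The key observation is that it is precisely the $m_i$ branch of the minimum that matters for the interval $[x_i,x_{i+1}]$, while the $m_{i-1}$ and $m_{i+1}$ branches are what tie consecutive intervals together when the argument is iterated.

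The main obstacle is upgrading $|\alpha_i|,|\beta_i|\le 3$ to $0\le\alpha_i,\beta_i\le 3$, i.e.\ producing the sign agreement $\mathrm{sign}(\dot f_i)=\mathrm{sign}(\dot f_{i+1})=\mathrm{sign}(m_i)$ demanded by Theorem~\ref{FrCarl} and Theorem~\ref{le:BoorSwa}. I would handle this by invoking the standing hypothesis that the data is monotone (so all nonzero $m_k$ share a common sign) together with the fact that any admissible derivative rule used in this setting returns $\dot f_i$ with $\mathrm{sign}(\dot f_i)=\mathrm{sign}(m_i)$ whenever $m_{i-1}$ and $m_i$ have the same sign; if a $\dot f_i$ violated this, the bound $|\dot f_i|\le 3\min(|m_{i-1}|,|m_i|)$ would already have forced $\dot f_i=0$ through one of the adjacent slopes, returning us to the degenerate case treated above. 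Once the signs are in place, $\alpha_i,\beta_i\in[0,3]$ and Theorem~\ref{le:BoorSwa} delivers monotonicity of $P_i$ on $[x_i,x_{i+1}]$; since $i\in\{2,\dots,n-2\}$ was arbitrary, the conclusion follows.
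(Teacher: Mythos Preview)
The paper does not prove this theorem; it is quoted from \cite{Huy93} without argument. Your reduction to Theorem~\ref{le:BoorSwa} on each interior interval is exactly the standard route, and your magnitude estimates $|\alpha_i|\le 3$, $|\beta_i|\le 3$ are correct and cleanly written.

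The genuine gap is in your handling of the sign condition. You correctly flag that Theorem~\ref{le:BoorSwa} requires $\alpha_i,\beta_i\ge 0$, not just $|\alpha_i|,|\beta_i|\le 3$, but your proposed resolution does not work. There is no ``standing hypothesis that the data is monotone'' in the statement, and your fallback claim---that a wrong-signed $\dot f_i$ would be forced to zero by \eqref{eq:BoorSwa2}---is simply false: the bound controls $|\dot f_i|$ but says nothing about its sign unless one of $m_{i-1},m_i$ vanishes. Indeed, as literally stated the theorem is not true: take equally spaced nodes with all $m_k=1$ and set $\dot f_i=-1$; then \eqref{eq:BoorSwa2} holds but $P_i'(x_i)<0$ while $P_i(x_{i+1})>P_i(x_i)$, so $P_i$ is not monotone. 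In Huynh's original result the sign agreement $\operatorname{sign}(\dot f_i)=\operatorname{sign}(m_i)$ (equivalently $\alpha_i,\beta_i\ge 0$) is part of the hypothesis; the paper has suppressed it here. You should state this as an explicit additional assumption rather than try to manufacture it from \eqref{eq:BoorSwa2}, which cannot be done.
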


Finally, we show a more general theorem proved in \cite{FrCar}:
% note that Boor-Swartz box is contained in the new extended  region, see Figure  \ref{boorsbox}.

\begin{theorem} (Sufficient conditions {for monotonicity}) \label{le:BoorSwa2}
\label{th:BoorSwaFrBut}
  Let $I_{i}=[x_{i},x_{i+1}]$ and $P_{i}$ be a cubic Hermite  interpolant of the
data $\{(x_{i},f_{i},\dot f_i)$, $(x_{i+1},f_{i+1},\dot f_{i+1})\}$, and let
$
\alpha_{i} := {\dot{f}_{i}} / {m_{i}}$,  $\beta_{i} :=
{\dot{f}_{i+1}} / {m_{i}}$. %  $i= 1, \ldots,n-1$.
  If one of the following conditions are satisfied
\begin{equation}
\label{eq:BoorSwaFrBut}
\begin{array}{l}
  0\leq \alpha_i +  \beta_i \leq 3, \vspace{0.3cm}
\\
%\quad \mbox{  or  }
\alpha_i^2+\alpha_i(\beta_i-6)+(\beta_i-3)^2 <0,
%\quad
\end{array}
\end{equation}
$\forall i=1,\ldots,n-1$
then the resulting cubic Hermite interpolant (\ref{Hermite}) is monotone on $I_i$.
\end{theorem}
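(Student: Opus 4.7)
The plan is to reduce monotonicity on $I_i$ to the non-negativity of a single quadratic. Theorem \ref{FrCarl} covers the case $m_i = 0$, so after replacing $f$ by $-f$ if necessary I may assume $m_i > 0$; the necessary condition (\ref{signos}) then gives $\alpha_i, \beta_i \ge 0$. Parametrizing $[x_i, x_{i+1}]$ by $t = (x - x_i)/h_i$, differentiation of (\ref{Hermite}) produces
\[
q(t) := \frac{P_i'(x)}{m_i} = 3(\alpha_i + \beta_i - 2)\,t^2 + 2(3 - 2\alpha_i - \beta_i)\,t + \alpha_i,
\]
so it suffices to show $q(t) \ge 0$ on $[0,1]$ under each of the two hypotheses.

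For the first condition I would rewrite $q$ in the quadratic Bernstein basis on $[0,1]$,
\[
q(t) = \alpha_i (1-t)^2 + 2\,(3 - \alpha_i - \beta_i)\, t(1-t) + \beta_i\, t^2,
\]
an identity obtained by a short expansion. If $\alpha_i, \beta_i \ge 0$ and $\alpha_i + \beta_i \le 3$, all three coefficients are non-negative, and since the Bernstein basis functions are themselves non-negative on $[0,1]$, the inequality $q(t) \ge 0$ follows at once.

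For the second condition, write $g(\alpha, \beta) := \alpha^2 + \alpha(\beta - 6) + (\beta - 3)^2$. Completing the square gives
\[
g(\alpha,\beta) = \tfrac{3}{4}(\alpha + \beta - 4)^2 + \tfrac{1}{4}(\alpha - \beta)^2 - 3,
\]
so $g < 0$ forces $(\alpha + \beta - 4)^2 < 4$, hence $\alpha + \beta > 2$; in particular the leading coefficient of $q$ is strictly positive. The key algebraic step is the identity
\[
3\alpha(\alpha + \beta - 2) - (3 - 2\alpha - \beta)^2 = -g(\alpha, \beta),
\]
which can be verified by direct expansion and identifies the vertex value of $q$ as $-g(\alpha_i, \beta_i)/\bigl(3(\alpha_i + \beta_i - 2)\bigr)$. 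Under $g < 0$ this quantity is strictly positive, so $q > 0$ on all of $\mathbb{R}$, and in particular on $[0,1]$. The main obstacle is spotting this last identity, which pairs the discriminant-like quantity $g$ with the vertex of $q$; once found, both halves of the theorem collapse to short computations, and the useful observation that $g < 0$ automatically forces $\alpha + \beta > 2$ eliminates any further case split in the second case.
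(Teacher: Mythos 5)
Your computations all check out: the normalized derivative $q(t) = P_i'(x)/m_i = 3(\alpha_i+\beta_i-2)t^2 + 2(3-2\alpha_i-\beta_i)t + \alpha_i$, the Bernstein rewriting, and the identity $3\alpha(\alpha+\beta-2) - (3-2\alpha-\beta)^2 = -g(\alpha,\beta)$ with $g(\alpha,\beta)=\alpha^2+\alpha(\beta-6)+(\beta-3)^2$ are all correct; and since the paper gives no proof of this theorem (it is quoted from \cite{FrCar}), a self-contained argument of exactly this shape is what is wanted. Your treatment of the second condition is in fact complete without any appeal to the sign condition: the set $g\le 0$ is an ellipse tangent to the coordinate axes at $(3,0)$ and $(0,3)$ and contained in the closed first quadrant (note $g(0,\beta)=(\beta-3)^2\ge 0$ and $g(\alpha,0)=(\alpha-3)^2\ge 0$), so $g<0$ already forces $\alpha_i,\beta_i>0$, and your vertex identity together with $\alpha_i+\beta_i>2$ then finishes that half rigorously.

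The genuine gap is in the first case. You obtain $\alpha_i,\beta_i\ge 0$ by invoking the necessary condition (\ref{signos}) of Theorem \ref{FrCarl}; but that condition is a \emph{consequence} of monotonicity, which is exactly what you are trying to prove, so the appeal is circular. Moreover the non-negativity cannot be dispensed with: $0\le\alpha_i+\beta_i\le 3$ alone does not imply monotonicity --- take $\alpha_i=-1$, $\beta_i=2$, for which $q(0)=-1<0$ while $q(1)=2>0$, so $P_i'$ changes sign on $I_i$. The statement as printed tacitly carries the sign condition (\ref{signos}) as a standing hypothesis on the data (just as Theorem \ref{le:BoorSwa} makes explicit by writing $0\le\alpha_i,\beta_i\le 3$); you should assume it, not derive it. With $\alpha_i,\beta_i\ge 0$ added as a hypothesis, your Bernstein argument --- the three coefficients $\alpha_i$, $3-\alpha_i-\beta_i$, $\beta_i$ all non-negative against a non-negative basis --- is correct, and is a cleaner route through the triangle $\alpha_i+\beta_i\le 3$ than the usual case split on the sign of the leading coefficient of $q$.
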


There exist many methods in the literature that deal with the problem of computing approximate derivative values in a way such that the resulting polynomials keep high  (ideally, maximal)  order of approximation and at the same time produce monotonicity-preserving reconstructions. They all face the problem that in order to ensure high order accuracy, the monotonicity--preserving property is lost and conversely. In \cite{FrCar} it is proved that the use of non-linear techniques is necessary to obtain third-order accurate interpolants (\cite{arandiga13}) using the following lemma:

\begin{lemma}  (\cite{Boo01})
\label{le:1}
Let us assume that $f(x)$ is smooth. If
$\dot{f}_{i}=f{'}(x_{i}) +O(h^{q_{i}})$ and
$\dot{f}_{i+1}=f{'}(x_{i+1}) +O(h^{q_{i+1}})$,
then the piecewise cubic Hermite interpolant (\ref{Hermite})
satisfies, on the interval $[x_i,x_{i+1}]$,
\begin{equation}
\label{eq:ordre}
P_i(x)=f(x)+O(h^{q}) \quad \mbox{where} \quad q=\min (4, q_i+1, q_{i+1}+1).
\end{equation}
\end{lemma}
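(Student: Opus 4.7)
The plan is to decompose the error $P_i(x)-f(x)$ into two pieces: a pure interpolation error (assuming exact derivative data), and a perturbation error due to the use of approximate derivatives. The $O(h^4)$ term in the bound will come from the first piece and the remaining terms from the second.

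First, I would introduce the auxiliary polynomial $\tilde P_i(x)$ defined as the cubic Hermite interpolant of $f$ on $[x_i,x_{i+1}]$ obtained from (\ref{Hermite})--(\ref{Hermitecoef}) by replacing $\dot f_i, \dot f_{i+1}$ with the exact derivatives $f'(x_i), f'(x_{i+1})$. A standard error estimate for cubic Hermite interpolation of smooth functions then yields
\begin{equation*}
f(x)-\tilde P_i(x)=O(\hat h^{4}) \quad \text{on } [x_i,x_{i+1}].
\end{equation*}
This already covers the $\min(\cdot,4)$ contribution.

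Next, I would analyze $P_i(x)-\tilde P_i(x)$. Both polynomials are cubics matching $f_i$ at $x_i$ and $f_{i+1}$ at $x_{i+1}$, so their difference is a cubic vanishing at both endpoints. It is therefore the Hermite interpolant of the derivative-error data $\varepsilon_i:=\dot f_i-f'(x_i)=O(\hat h^{q_i})$ and $\varepsilon_{i+1}:=\dot f_{i+1}-f'(x_{i+1})=O(\hat h^{q_{i+1}})$ at zero function values, which can be written explicitly as
\begin{equation*}
P_i(x)-\tilde P_i(x)=\varepsilon_i\, H_0(x)+\varepsilon_{i+1}\, H_1(x),
\end{equation*}
where $H_0,H_1$ are the two Hermite basis functions associated with the derivative data at the left and right endpoint respectively. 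A direct computation (using, e.g., $H_0(x)=(x-x_i)(x-x_{i+1})^2/h_i^2$ and analogously for $H_1$) shows that $\|H_0\|_\infty, \|H_1\|_\infty = O(h_i) = O(\hat h)$ on the interval, so that $P_i(x)-\tilde P_i(x)=O(\hat h^{q_i+1})+O(\hat h^{q_{i+1}+1})$.

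Combining both bounds via the triangle inequality gives $P_i(x)-f(x)=O(\hat h^{4})+O(\hat h^{q_i+1})+O(\hat h^{q_{i+1}+1})=O(\hat h^{q})$ with $q=\min(4,q_i+1,q_{i+1}+1)$, as claimed. The only delicate point is justifying the $O(\hat h)$ bound on the Hermite basis functions $H_0, H_1$ on $[x_i,x_{i+1}]$; this is a routine calculation but is the place where the crucial factor of $h$ (which upgrades $q_i$ to $q_i+1$) appears, and it is the reason the overall order cannot exceed $4$ even if the derivative data are exact. Everything else is bookkeeping with $O(\cdot)$ constants which are uniform in $i$ by the smoothness of $f$.
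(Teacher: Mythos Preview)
Your argument is correct. Note, however, that the paper does not supply its own proof of this lemma: it is quoted from de~Boor \cite{Boo01} and stated without proof. Your decomposition into the exact-data Hermite interpolation error (contributing $O(\hat h^4)$) plus the perturbation coming from the inexact derivative data (contributing $O(\hat h^{q_i+1})+O(\hat h^{q_{i+1}+1})$ via the $O(\hat h)$ bound on the derivative basis functions $H_0,H_1$) is precisely the standard argument found in de~Boor's text, so there is nothing to contrast.
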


In addition to  monotonicity preservation, in some applications some regularity is demanded. In order to obtain $C^2$ approximations we introduce cubic spline interpolation in Section \ref{splines}. Also, we determine sufficient conditions to obtain monotone spline cubic interpolants using the theorems presented above. In Section \ref{sec:derivatives2} we construct new monotone interpolants and study their properties regarding the order of approximation. Some numerical experiments are
shown in Section \ref{sec:num} in order to confirm the properties of the proposed algorithms. Finally, some conclusions are presented in Section \ref{sec:conclusion}

\section{Cubic spline interpolation}\label{splines}

In this section, we construct a cubic spline $P_i(x)$ verifying the  conditions
\begin{eqnarray}\label{splinesconditions1}
P_i^{(k)}(x_{i+1})&=&P_{i+1}^{(k)}(x_{i+1}), \qquad k=0,1,2, \\\label{splinesconditions2}
P_1'(x_1)&=&\dot{f}_1:=f'(x_1)=f'_1,\\\label{splinesconditions3}
P_{n-1}'(x_n)&=&\dot{f}_n:=f'(x_n)=f'_n,
\end{eqnarray}
with $i=1,\hdots,n-2$.

Our approach to construct the cubic spline is to start by Eq. \eqref{Hermite}, thus the conditions \eqref{splinesconditions2}--
\eqref{splinesconditions3} and \eqref{splinesconditions1} for the cases $k=0, 1$ are satisfied.
Conditions \eqref{splinesconditions1} for  $k=2$  will be used to obtain the appropriate approximations to the values of the first derivatives.

%\begin{enumerate}[label=(M{{\arabic*}})]
%\item \label{condicionM1} The function $M^+$ is a mean on $\mathbb{R}^2_{+}$, then
%\begin{enumerate}[label=(M1.{{\arabic*}})]
%\item \label{condicionM11} $\min(x,y)\leq M^+(x,y) \leq \max(x,y)$.
%\item \label{condicionM12} $M^+(x,x)=x$.
%\end{enumerate}
%\item  \label{condicionM2} $M^+(x-h,x+h)=x+O(h^2)$ uniformly in compacts when $h\to 0$.
%\item  \label{condicionM3} $M^+(tx,ty)=tM^+(x,y)$, for any $x,y,t\in \mathbb{R}_{+}$.
%\item  \label{condicionM4} $M^+(x,y)\leq \omega \frac{x+y}{2}$, $x,y\in \mathbb{R}_{+}$, for some constant $\omega\geq 1$.
%\item  \label{condicionM5} $M^+(x,y)\leq \gamma \min(x,y)$, $x,y\in \mathbb{R}_{+}$, $\gamma \geq 1$.
%\item \label{condicionM6} For all $x,x^{'},y,y^{'}\in \mathbb{R}_{+}$, there exists a positive constant  $\gamma\geq1$
%%$$M(x,y)-M(x^{'},y^{'})= \alpha (x-x^{'}) + \beta (y-y^{'}),$$
%independent of the value of $x,x^{'},y,y^{'}$, such that,
%$$|M^+(x,y)-M^+(x',y')|\leq \gamma \max (|x-x'|,|y-y'|).$$
%\end{enumerate}

\subsection{Spline cubic interpolation from Hermite cubic form}\label{secHermite1}

As indicated above we start from Eq. \eqref{Hermite} and we impose that:
\begin{equation}
  \label{eq:cond}
P_i'' (x_{i+1})=P_{i+1}'' (x_{i+1}), \quad i=1,\hdots,n-2,
\end{equation}
thus, we have that:
\begin{equation*}
P_l''(x) = 2c_3^l +4c_4^l(x-x_{l})+2c_4^l(x-x_{l+1}), \quad l=i,i+1.
\end{equation*}
Then, using \eqref{Hermitecoef} we get
\begin{equation*}
\begin{split}
P_i''(x_{i+1}) &= 2c_3^i +4c_4^i h_i=2\left(\frac{m_{i} -\dot{f}_{i}}{h_{i}}\right)+4\left(\frac{\dot{f}_{i+1} + \dot{f}_{i} - 2 m_{i}}{h_{i}}\right)=2\frac{\dot{f}_{i}}{h_{i}}+4\frac{\dot{f}_{i+1}}{h_{i}}-6\frac{m_{i}}{h_i} ,\\
 P_{i+1}''(x_{i+1})& =  2c_3^{i+1} +2c_4^{i+1}(x_{i+1}-x_{i+2})=2\frac{m_{i+1}}{h_{i+1}} -2\frac{\dot{f}_{i+1}}{h_{i+1}}-2\left(\frac{\dot{f}_{i+2} + \dot{f}_{i+1} - 2 m_{i+1}}{h_{i+1}}\right)\\
& =-4\frac{\dot{f}_{i+1}}{h_{i+1}} -2\frac{\dot{f}_{i+2}}{h_{i+1}}+6\frac{m_{i+1}}{h_{i+1}}. \\
\end{split}
\end{equation*}
By Eq. \eqref{eq:cond}, we obtain
\begin{equation}
\begin{split}
&2\frac{\dot{f}_{i}}{h_{i}}+4\frac{\dot{f}_{i+1}}{h_{i}}-6\frac{m_{i}}{h_i} =-4\frac{\dot{f}_{i+1}}{h_{i+1}} -2\frac{\dot{f}_{i+2}}{h_{i+1}}+6\frac{m_{i+1}}{h_{i+1}} \Rightarrow \\
&\frac{\dot{f}_{i}}{h_{i}}+2\left(\frac{1}{h_{i}}+\frac{1}{h_{i+1}}\right)\dot{f}_{i+1} +\frac{\dot{f}_{i+2}}{h_{i+1}}=3\left(\frac{m_{i}}{h_i} +\frac{m_{i+1}}{h_{i+1}}\right)  \Rightarrow\\
&\frac{h_{i+1} }{h_{i}+h_{i+1}}  \, \dot{f}_{i}+
2  \, \dot{f}_{i+1}+
\frac{h_{i} }{h_{i}+h_{i+1}}  \, \dot{f}_{i+2}=3\left(
  \frac{h_{i+1}}{h_{i}+h_{i+1}}m_{i}  +\frac{h_{i}}{h_{i}+h_{i+1}}m_{i+1}\right). \\
\end{split}
\end{equation}
If we take the boundary conditions given in Eq. \eqref{splinesconditions2}--\eqref{splinesconditions3}, i. e. $\dot{f}_{1}=f'_1$ and $\dot{f}_{n}=f'_{n}$, we have the following system
\begin{equation}
\label{eq:splind}
\left\{
\begin{array}{lc}
2\, \dot{f}_2+ \mu_1\, \dot{f}_3=b_2, & \\
\lambda_i \, \dot{f}_{i}+ 2 \, \dot{f}_{i+1}+ \mu_i \, \dot{f}_{i+2}=b_{i+1},& i=2,\ldots,n-3, \\
\lambda_{n-2} \, \dot{f}_{n-2}+ 2 \, \dot{f}_{n-1}=b_{n-1},&  \\
\end{array}
\right.
\end{equation}
where $\lambda_i=\frac{h_{i+1}}{h_i+h_{i+1}}$,  $\mu_i=\frac{h_i}{h_i+h_{i+1}}$, $\lambda_i+\mu_i=1$, $i=1,\hdots,n-1$, and
\begin{eqnarray*}
b_2&=&3\, (\lambda_1 \, m_{1} +\mu_1 \, m_{2})- \lambda_1 \,f'_{1} , \\
b_{i}&=&3\, (\lambda_{i-1} \, m_{i-1} +\mu_{i-1} \, m_{i}),\quad i=3,\ldots, n-2, \\
b_{n-1}&=&3\, (\lambda_{n-2} \, m_{n-2} +\mu_{n-2} \, m_{n-1})
-  \mu_{n-2} \, f'_{n}.
\end{eqnarray*}
Thus, the system obtained is
\begin{equation}\label{primersistema}
A \dot F = B,
\end{equation}
with
%\begin{equation}
%\left[\begin{array}{cccccc}
%2&\mu_1&&&&\\
%\lambda_2&2&\mu_2&&&\\
%%&&&&&&&&&&&\\
%&\lambda_3&2&\mu_3&\\
%&&\ddots&\ddots &\ddots&\\
%%&&&&&&&&&&&\\
%%&&&&&&&&&&&\\
%&&&\lambda_{n-3}&2&\mu_{n-3}\\
%&&&&\lambda_{n-2}&2\\
%\end{array}
%\right]
%\left[\begin{array}{c}
%\dot{f}_2\\
%\dot{f}_3 \\
%\dot{f}_4 \\
%\vdots\\
%%M_{u-1}\\M_u \\M_{u+1} \\\vdots\\
%%M_{v-1}\\M_v \\M_{v+1} \\\vdots\\
%\dot{f}_{n-3}\\
%\dot{f}_{n-2}\\
%\dot{f}_{n-1}
%\end{array}
%\right]
%=3\cdot
%\left[\begin{array}{l}
% \frac23\lambda_1 \, m_{1} +\mu_1 \, m_{2}\\
% \lambda_2 \, m_{2} +\mu_2 \, m_{3}\\
% \lambda_3 \, m_{3} +\mu_3 \, m_{4} \\
%%\vdots\\d_{u-1}\\d_u \\d_{u+1} \\
%%\vdots\\d_{v-1}\\d_v \\d_{v+1} \\
%\vdots
%\\
% \lambda_{n-4} \, m_{n-4} +\mu_{n-4} \, m_{n-3}\\
% \lambda_{n-3} \, m_{n-3} +\mu_{n-3} \, m_{n-2}\\
% \lambda_{n-2} \, m_{n-2} +\frac23\mu_{n-2} \, m_{n-1}\\
%\end{array}
%\right]
%\end{equation}
\begin{equation}\label{splinesystem}
A=\left[\begin{array}{cccccc}
2&\mu_1&&&&\\
\lambda_2&2&\mu_2&&&\\
%&&&&&&&&&&&\\
&\lambda_3&2&\mu_3&\\
&&\ddots&\ddots &\ddots&\\
%&&&&&&&&&&&\\
%&&&&&&&&&&&\\
&&&\lambda_{n-3}&2&\mu_{n-3}\\
&&&&\lambda_{n-2}&2\\
\end{array}
\right], \dot F=\left[\begin{array}{c}
\dot{f}_2\\
\dot{f}_3 \\
\dot{f}_4 \\
\vdots\\
%M_{u-1}\\M_u \\M_{u+1} \\\vdots\\
%M_{v-1}\\M_v \\M_{v+1} \\\vdots\\
\dot{f}_{n-3}\\
\dot{f}_{n-2}\\
\dot{f}_{n-1}
\end{array}
\right], B=\left[\begin{array}{c}
 3(\lambda_1 \, m_{1} +\mu_1 \, m_{2})-\lambda_1 \,f'_1\\
 3(\lambda_2 \, m_{2} +\mu_2 \, m_{3})\\
 3(\lambda_3 \, m_{3} +\mu_3 \, m_{4}) \\
%\vdots\\d_{u-1}\\d_u \\d_{u+1} \\
%\vdots\\d_{v-1}\\d_v \\d_{v+1} \\
\vdots
\\
 3(\lambda_{n-4} \, m_{n-4} +\mu_{n-4} \, m_{n-3})\\
 3(\lambda_{n-3} \, m_{n-3} +\mu_{n-3} \, m_{n-2})\\
 3(\lambda_{n-2} \, m_{n-2} +\mu_{n-2} \, m_{n-1})-\mu_{n-2} f'_n\\
\end{array}
\right].
\end{equation}
The equality $\lambda_i+\mu_i=1$ implies that the matrix $A$ is irreducibly diagonally dominant and hence non-singular. In order to calculate the order
of the approximate derivative values computed by solving \eqref{primersistema}, we prove the following lemma and theorem using the ideas presented in \cite{Stoerbulirsch} (Eq. 2.4.2.14).

\begin{lemma}(\cite{Stoerbulirsch})\label{lemma1}
Let be $0<m \in \mathbb{N}$, $0\leq \mu_i,\lambda_i\leq 1$ with $1\leq i\leq m$ such that
$$\lambda_i+\mu_i=1\quad i=1,\hdots,m,$$
and $A\in \mathbb{R}^{m\times m}$ defined as:
\begin{equation}\label{eq:matriuA}
A:=\left[\begin{array}{cccccc}
2&\mu_1&&&&\\
\lambda_2&2&\mu_2&&&\\
%&&&&&&&&&&&\\
&\lambda_3&2&\mu_3&\\
&&\ddots&\ddots &\ddots&\\
%&&&&&&&&&&&\\
%&&&&&&&&&&&\\
&&&\lambda_{m-1}&2&\mu_{m-1}\\
&&&&\lambda_{m}&2\\
\end{array}
\right].
\end{equation}
Given $w\in \mathbb{R}^{m}$, if $z \in \mathbb{R}^{m}$ solves  $Az^T= w$
then
$$||z||_\infty\leq ||w||_\infty.$$
\end{lemma}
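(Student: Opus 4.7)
The plan is to exploit the diagonal dominance of $A$ via the classical ``maximum index'' argument. Let $k \in \{1,\ldots,m\}$ be an index at which the maximum $\|z\|_\infty = |z_k|$ is attained (if $z=0$ there is nothing to prove, so assume $|z_k|>0$). I will then read off the $k$-th scalar equation of $Az = w$, apply the triangle inequality, and use the relation $\lambda_i+\mu_i=1$ to absorb the off-diagonal contributions into the diagonal term.

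Concretely, for an interior index $2 \le k \le m-1$ the $k$-th equation reads
\begin{equation*}
\lambda_k z_{k-1} + 2 z_k + \mu_k z_{k+1} = w_k,
\end{equation*}
so that
\begin{equation*}
2|z_k| \le |w_k| + \lambda_k |z_{k-1}| + \mu_k |z_{k+1}| \le |w_k| + (\lambda_k+\mu_k)|z_k| = |w_k| + |z_k|,
\end{equation*}
which yields $|z_k| \le |w_k| \le \|w\|_\infty$. For the boundary cases the same idea works, but I have to be slightly more careful because only one off-diagonal entry is present. For $k=1$ the equation is $2z_1 + \mu_1 z_2 = w_1$, giving $(2-\mu_1)|z_1| \le |w_1|$; since $\mu_1 \le 1$, we have $2-\mu_1 \ge 1$ and hence $|z_1| \le |w_1|$. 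The case $k=m$ is completely symmetric using $\lambda_m \le 1$.

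There is no genuine obstacle here: the only thing to be mindful of is that the first and last rows of $A$ contain only one off-diagonal coefficient, so the identity $\lambda_k+\mu_k=1$ is not directly available in those rows. The fix is the elementary bound $\mu_1,\lambda_m \in [0,1]$, which still ensures strict diagonal dominance of the boundary rows and hence the same conclusion. Combining the three cases gives $\|z\|_\infty = |z_k| \le |w_k| \le \|w\|_\infty$, which is the desired inequality.
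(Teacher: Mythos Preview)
Your argument is correct and follows essentially the same ``maximum index'' strategy as the paper: pick $k$ with $|z_k|=\|z\|_\infty$, apply the reverse triangle inequality to the $k$-th equation, and use $\lambda_k+\mu_k=1$ to conclude. You are in fact slightly more careful than the paper, which treats only the generic interior row and does not single out the boundary cases $k=1$ and $k=m$; your explicit handling of those (via $\mu_1\le 1$ and $\lambda_m\le 1$) fills that small gap.
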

\begin{proof}
Let $i_0$ be such that $|z_{i_0}|=||z||_\infty$, then
\begin{equation}
\begin{split}
  ||w||_\infty & \geq |w_{i_0}| =|\lambda_{i_0+1}z_{i_0-1}+2z_{i_0}+\mu_{i_0+1}z_{i_0+1}|\\
               & \geq 2|z_{i_0}|-\lambda_{i_0+1}|z_{i_0-1}|-\mu_{i_0+1}|z_{i_0+1}|\\
               & \geq 2|z_{i_0}|-(\lambda_{i_0+1}+\mu_{i_0+1})|z_{i_0}|\\
               & = |z_{i_0}|= ||z||_\infty
  \end{split}
\end{equation}
\end{proof}

%\begin{proof}
%Let $i_0$ be such that $|z_{i_0}|=||z||_\infty$, then
%\begin{equation}
%\begin{split}
%  ||w||_\infty & \geq |w_{i_0}| =|\lambda_{i_0+1}z_{i_0-1}+2z_{i_0}+\mu_{i_0+1}z_{i_0+1}|\\
%               & \geq 2|z_{i_0}|-\lambda_{i_0+1}|z_{i_0-1}|-\mu_{i_0+1}|z_{i_0+1}|\\
%               & \geq 2|z_{i_0}|-(\lambda_{i_0+1}+\mu_{i_0+1})|z_{i_0}|\\
%               & = |z_{i_0}|= ||z||_\infty
%  \end{split}
%\end{equation}
%\end{proof}

\begin{lemma}
\label{lema1sempre}
Let us assume that $f(x)\in C^{4}([x_1,x_n])$ and let $L>0$ be such that such that $|f^{(4)}(x)|\leq L$, for all $x\in[x_1,x_n]$. If there exists $K>0$ such that $\hat{h}/h_j \leq K$ for all $j=1,\hdots,n-1$ and
\begin{equation}\label{equationR}
R(i):=3\lambda_{i-1} \, m_{i-1} +3\mu_{i-1} \, m_{i}-\lambda_{i-1} \,f'_{i-1}-2 f'_{i} -\mu_{i-1} f'_{i+1}, \quad 2\leq i \leq n-1,
\end{equation}
with $\lambda_i,m_i,\mu_i$, $1\leq i\leq n-1$, previously defined, then:
\begin{equation}\label{Requation}
|R(i)| \leq  \left(\frac{17\,K+K^2}{16}+1\right)L \hat{h}^3= O(\hat{h}^3), \qquad 2\leq i \leq n-1.
\end{equation}
\end{lemma}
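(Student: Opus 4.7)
The plan is to exploit the fact that the quantity $R(i)$ is essentially the residual of the linear equation derived from the matching condition $P_i''(x_{i+1})=P_{i+1}''(x_{i+1})$ when the exact derivative values $f'_j$ are substituted. Since the Hermite interpolant with exact derivatives reproduces cubic polynomials, $R(i)$ must vanish whenever $f$ is a polynomial of degree at most $3$. Hence by Taylor's theorem, $R(i)$ can be written purely in terms of $f^{(4)}$ evaluated at intermediate points, and the bound $|f^{(4)}|\le L$ will yield the claimed $O(\hat h^3)$ estimate.

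Concretely, I would expand every quantity appearing in \eqref{equationR} around $x_i$ using Taylor's formula with Lagrange remainder of order four. Writing
\begin{equation*}
m_{i-1}=f'(x_i)-\tfrac{h_{i-1}}{2}f''(x_i)+\tfrac{h_{i-1}^{2}}{6}f'''(x_i)-\tfrac{h_{i-1}^{3}}{24}f^{(4)}(\xi_1),
\end{equation*}
\begin{equation*}
m_{i}=f'(x_i)+\tfrac{h_{i}}{2}f''(x_i)+\tfrac{h_{i}^{2}}{6}f'''(x_i)+\tfrac{h_{i}^{3}}{24}f^{(4)}(\xi_2),
\end{equation*}
and analogous degree-three expansions for $f'_{i-1}$ and $f'_{i+1}$ with remainders involving $h_{i-1}^{3}f^{(4)}(\xi_3)/6$ and $h_{i}^{3}f^{(4)}(\xi_4)/6$, I would substitute into $R(i)$ and group by derivative order. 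The coefficients of $f'(x_i)$, $f''(x_i)$, $f'''(x_i)$ then collapse to zero using the identities $\lambda_{i-1}+\mu_{i-1}=1$ together with $h_{i-1}\lambda_{i-1}=h_{i}\mu_{i-1}=h_{i-1}h_i/(h_{i-1}+h_i)$. This confirms the cubic-reproduction property explicitly and isolates the remainder.

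What remains is a linear combination of the form
\begin{equation*}
R(i)=-\tfrac{\lambda_{i-1}h_{i-1}^{3}}{8}f^{(4)}(\xi_1)+\tfrac{\mu_{i-1}h_{i}^{3}}{8}f^{(4)}(\xi_2)+\tfrac{\lambda_{i-1}h_{i-1}^{3}}{6}f^{(4)}(\xi_3)-\tfrac{\mu_{i-1}h_{i}^{3}}{6}f^{(4)}(\xi_4).
\end{equation*}
Applying $|f^{(4)}|\le L$ and the triangle inequality yields a bound of the form $CL(\lambda_{i-1}h_{i-1}^{3}+\mu_{i-1}h_i^{3})$. Converting the mesh ratios into the global quantity $\hat h$ via $h_j\le \hat h\le K h_j$ and $0\le\lambda_{i-1},\mu_{i-1}\le 1$ gives the $\hat h^{3}$ scaling; the precise constant $(17K+K^{2})/16+1$ will arise from the specific way the authors bound the factors $\lambda_{i-1}h_{i-1}^{3}$ and $\mu_{i-1}h_{i}^{3}$, likely by distinguishing the two mesh spacings and invoking $\hat h/h_j\le K$ wherever a larger-than-$\hat h$ contribution appears.

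The main obstacle is purely bookkeeping: verifying that the three lower-order coefficients cancel identically requires careful use of the definitions of $\lambda_{i-1}$ and $\mu_{i-1}$, and then matching the remainder estimate to the stated constant demands consistent application of the bound $\hat h/h_j\le K$. No nontrivial analytic input beyond Taylor's theorem is needed, so once the cancellation is written out the conclusion follows immediately.
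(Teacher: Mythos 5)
Your proof is correct, and it is a genuine (and arguably better) variant of the paper's argument. Both proofs are Taylor expansion plus cancellation of the degree-$\le 3$ terms plus the triangle inequality, but the paper expands everything about $x_{i-1}$, which forces it to write $m_i$ as a difference of two fourth-order expansions divided by $h_i$; the resulting remainder contains terms such as $h_{i-1}(h_{i-1}+h_i)^3/(8h_i)$ and $h_{i-1}^5/\bigl(8h_i(h_{i-1}+h_i)\bigr)$, and it is precisely to control these divisions by $h_i$ that the mesh-ratio hypothesis $\hat h/h_j\le K$ enters, producing the constant $\frac{17K+K^2}{16}+1$. Your choice to expand about the central node $x_i$ exploits the symmetry of $R(i)$: each of $m_{i-1}$, $m_i$, $f'_{i-1}$, $f'_{i+1}$ is a single clean expansion, the identities $\lambda_{i-1}+\mu_{i-1}=1$ and $\lambda_{i-1}h_{i-1}=\mu_{i-1}h_i$ kill the coefficients of $f'_i$, $f''_i$, $f'''_i$ exactly as you state, and the remainder you display gives
\begin{equation*}
|R(i)|\le L\left(\tfrac18+\tfrac16\right)\left(\lambda_{i-1}h_{i-1}^3+\mu_{i-1}h_i^3\right)\le \tfrac{7}{12}\,L\,\hat h^3,
\end{equation*}
with no mesh-ratio constant at all. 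Since $\hat h/h_j\ge 1$ forces $K\ge 1$, we have $\frac{7}{12}<\frac{17K+K^2}{16}+1$, so your bound implies the stated inequality a fortiori; you need not (and cannot) recover the paper's particular constant by further massaging, because that constant is an artifact of the off-center expansion. The only caveat is presentational: you should state explicitly that your sharper, $K$-free estimate subsumes the claimed one rather than leaving the recovery of the exact constant as an open bookkeeping step.
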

\begin{proof}
Let be $2\leq i \leq n-1$, using Taylor's expansions we have that there exist $\tau_j^i \in [x_1,x_n], j=1\hdots,5$ such that:
\begin{equation}\label{eq:deriva}
\begin{split}
m_{i-1}=&\frac{f(x_{i-1}+h_{i-1})-f(x_{i-1})}{h_{i-1}}=f'_{i-1}+\frac{h_{i-1}}{2}
   f''_{i-1}+\frac{h_{i-1}^2}{6} f^{(3)}_{i-1}+\frac{h_{i-1}^3}{24}
   f^{(4)}(\tau^i_1)\\
m_i=&\left(\frac{(h_{i-1}+h_i) }{h_i}f'_{i-1}+\frac{(h_{i-1}+h_{i})^2
   }{2 h_{i}}f''_{i-1}+\frac{(h_{i-1}+h_{i})^3}{6
   h_{i}}f^{(3)}_{i-1}+\frac{ (h_{i-1}+h_{i})^4}{24
   h_{i}}f^{(4)}(\tau^i_2)\right)\\
   &-\frac{h_{i-1}
   }{h_{i}}f'_{i-1}-\frac{h_{i-1}^2 }{2
   h_{i}}f''_{i-1}-\frac{ h_{i-1}^3}{6
   h_{i}}f^{(3)}_{i-1}-\frac{h_{i-1}^4}{24
   h_{i}}f^{(4)}(\tau^i_3)\\
=&
   f'_{i-1}+ \frac{(2 h_{i-1}+h_{i})}{2} f''_{i-1} +\frac{\left(3 h_{i-1}^2+3h_{i-1}h_{i}
   +h_{i-1}^2\right)}{6} f^{(3)}_1+\frac{ (h_{i-1}+h_{i})^4}{24
   h_{i}}f^{(4)}(\tau^i_2)-\frac{h_{i-1}^4}{24
   h_{i}}f^{(4)}(\tau^i_3)\\
f'_{i}=&f'_{i-1}+h_{i-1}f''_{i-1}+\frac{h_{i-1}^2}{2}f^{(3)}_{i-1}+\frac{h_{i-1}^3}{6}
   f^{(4)}(\tau^i_4)\\
f'_{i+1}=&f'_{i-1}+(h_{i-1}+h_{i})f''_{i-1}+\frac{(h_{i-1}+h_{i})^2}{2}f^{(3)}_{i-1}+\frac{(h_{i-1}+h_{i})^3}{6}
   f^{(4)}(\tau^i_5)\\
   \end{split}
\end{equation}
then by \eqref{eq:deriva} we have:
\begin{equation*}
\begin{split}
|R(i)|=&|3\lambda_{i-1} \, m_{i-1} +3\mu_{i-1} \, m_{i}-\lambda_{i-1} \,f'_{i-1}-2 f'_{i} -\mu_{i-1} f'_{i+1}|\\
=&|\lambda_{i-1}(3\,m_{i-1}-f'_{i-1})+\mu_{i-1}(3\,m_{i}-f'_{i+1})-2f'_{i}|\\
=&\bigg|\frac{1}{2(h_{i-1}+h_{i})}(\left(4 h_{i}
   f'_{i-1}+(3 h_{i-1}h_{i})f''_{i-1}+(h_{i-1}^2 h_{i})f^{(3)}_{i-1}\right)+\\
   &+\left(4h_{i-1}f'_{i-1}+\left(4 h_{i-1}^2
   +h_{i-1} h_{i} \right)f''_{i-1}+\left(2h_{i-1}^3+2h_{i-1}^2 h_{i}
   \right)f^{(3)}_{i-1}\right))\\
   &-2\left(f'_{i-1}+h_{i-1}f''_{i-1}+\frac{h_{i-1}^2}{2}f^{(3)}_{i-1}\right)+\frac{h_{i-1}^3h_{i}}{8(h_{i-1}+h_{i})}f^{(4)}(\tau^{i}_{1})+\frac{ h_{i-1}(h_{i-1}+h_{i})^3}{8
   h_{i}}f^{(4)}(\tau^i_2)\\&-\frac{h_{i-1}^5}{8
   h_{i}(h_{i-1}+h_{i})}f^{(4)}(\tau^i_3)- \frac{h_{i-1}(h_{i-1}+h_{i})^2}{6}
   f^{(4)}(\tau^i_5)-\frac{h_{i-1}^3}{3}
   f^{(4)}(\tau^i_4)\bigg|\\
\leq & \left(\frac{h_{i-1}^3h_{i}}{8(h_{i-1}+h_{i})}+\frac{ h_{i-1}(h_{i-1}+h_{i})^3}{8
   h_{i}}+\frac{h_{i-1}^5}{8
   h_{i}(h_{i-1}+h_{i})}+ \frac{h_{i-1}(h_{i-1}+h_{i})^2}{6}+\frac{h_{i-1}^3}{3}\right) L\\
   \leq & \left(\frac{K\hat{h}^3}{16}+K\hat{h}^3+K^2\frac{\hat{h}^3}{16}+ \frac{2\hat{h}^3}{3}+\frac{\hat{h}^3}{3}\right) L\\
 =& \left(\frac{17\,K+K^2}{16}+1\right)L \hat{h}^3= O(\hat{h}^3).
\end{split}
\end{equation*}

\end{proof}

\begin{theorem}  (\cite{Stoerbulirsch})
\label{teo:1}
Let us assume that $\hat h < 1$, $f(x)\in C^{4}([x_1,x_n])$ and  let $L>0$ be such that  $|f^{(4)}(x)|\leq L$, for all $x\in[x_1,x_n]$. $F'=[f'(x_2),\hdots,f'(x_{n-1})]^T$, $\dot F$ is the solution of system \eqref{primersistema} and there exist $K>0$, such that $\hat{h}/ h_j \leq K$ for all $j=1,\hdots,n-1$  then:
$$||\dot F-F'||_\infty \leq  O(\hat{h}^3). $$
\end{theorem}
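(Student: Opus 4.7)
The plan is to reduce the statement to the two lemmas already established. Observe that $A\dot F = B$ is precisely the system obtained from the $C^2$ junction conditions at the interior knots, adjusted at the endpoints by the exact boundary derivatives $f'_1$ and $f'_n$. If we substitute the vector of exact derivatives $F' = [f'(x_2),\dots,f'(x_{n-1})]^T$ into the left-hand side of the system, we do not recover $B$ on the right; the defect is exactly the consistency error of these junction conditions. This motivates writing
\begin{equation*}
A(\dot F - F') = B - A F',
\end{equation*}
so that Lemma~\ref{lemma1}, applied with $z = \dot F - F'$ and $w = B - A F'$, reduces the whole theorem to showing $\|B - A F'\|_\infty = O(\hat h^3)$.

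Next I would compute $B - A F'$ componentwise and recognise each entry as one of the residuals $R(i)$ defined in \eqref{equationR}. For the interior rows $r$ with $2 \le r \le n-3$ this is immediate from \eqref{splinesystem}: the $r$-th component is
\begin{equation*}
(B - A F')_r = 3\lambda_r m_r + 3\mu_r m_{r+1} - \lambda_r f'_r - 2 f'_{r+1} - \mu_r f'_{r+2} = R(r+1).
\end{equation*}
For $r = 1$, the boundary term $-\lambda_1 f'_1$ built into $b_2$ supplies exactly the $\lambda_1 f'_{r}$ contribution that is missing from the truncated first row of $A$, so $(B - A F')_1 = R(2)$. A symmetric cancellation in the last row gives $(B - A F')_{n-2} = R(n-1)$. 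Thus every component of $B - A F'$ is of the form $R(i)$ for some $2 \le i \le n-1$.

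Now Lemma~\ref{lema1sempre} provides the uniform bound $|R(i)| \le \bigl(\tfrac{17K+K^2}{16}+1\bigr) L \hat h^3$, from which $\|B - A F'\|_\infty = O(\hat h^3)$. Combining this with Lemma~\ref{lemma1} yields
\begin{equation*}
\|\dot F - F'\|_\infty \le \|B - A F'\|_\infty = O(\hat h^3),
\end{equation*}
which is the desired estimate.

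The only delicate point in the argument is the bookkeeping for the first and last rows, where the absorption of the prescribed boundary derivatives into $b_2$ and $b_{n-1}$ must exactly compensate for the missing sub- and super-diagonal entries in $A$; without this precise cancellation the boundary contributions would pollute the infinity norm at order $O(1)$ rather than the desired $O(\hat h^3)$. Everything else is a straightforward linear-algebra reduction to the two preceding lemmas.
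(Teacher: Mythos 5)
Your proposal is correct and follows essentially the same route as the paper: define the residual $r = A(\dot F - F') = B - AF'$, identify each component (including the boundary rows, where the $-\lambda_1 f'_1$ and $-\mu_{n-2} f'_n$ terms in $b_2$ and $b_{n-1}$ supply the missing off-diagonal contributions) with $R(i)$, bound it via Lemma~\ref{lema1sempre}, and conclude with Lemma~\ref{lemma1}. The indexing and the boundary-row bookkeeping you flag as the delicate point are handled exactly as in the paper's own argument.
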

\begin{proof}
We define $r=A(\dot F-F')=A\dot F - AF'=B-AF'$, by Lemma \ref{lema1sempre} we have:
\begin{equation*}
\begin{split}
|r_1|=&|b_2-2 f'_2 -\mu_1 f'_3|=|3\lambda_1 \, m_{1} +3\mu_1 \, m_{2}-\lambda_1 \,f'_1-2 f'_2 -\mu_1 f'_3|=|R(2)|\leq  \left(\frac{17\,K+K^2}{16}+1\right)L \hat{h}^3= O(\hat{h}^3),\\
|r_{i-1}|=&|b_i-\lambda_{i-1} \,f'_{i-1}-2 f'_{i} -\mu_{i-1} f'_{i+1}|=|3\lambda_{i-1} \, m_{i-1} +3\mu_{i-1} \, m_{i}-\lambda_{i-1} \,f'_{i-1}-2 f'_{i} -\mu_{i-1} f'_{i+1}|=|R(i)|\\
\leq & \left(\frac{17\,K+K^2}{16}+1\right)L \hat{h}^3= O(\hat{h}^3), \quad 3\leq i \leq n-2,\\
\end{split}
\end{equation*}
\begin{equation*}
\begin{split}
|r_{n-2}|=&|b_{n-1}-\lambda_{n-2} f'_{n-2}-2 f'_{n-1} |=|3\, (\lambda_{n-2} \, m_{n-2} +\mu_{n-2} \, m_{n-1})
-  \mu_{n-2} \, f'_{n}-\lambda_{n-2} f'_{n-2}-2 f'_{n-1}|\\
=&|R(n-1)|\leq  \left(\frac{17\,K+K^2}{16}+1\right)L \hat{h}^3= O(\hat{h}^3).
\end{split}
\end{equation*}
Finally, by Lemma \ref{lemma1}, we obtain:
$$||r||_\infty = ||A(\dot F-F')||_\infty \Rightarrow ||(\dot F-F')||_\infty\leq ||r||_\infty\leq O(\hat{h}^3).$$
\end{proof}

 In the case of equally-spaced grids \cite{kershaw3} the estimate in Theorem \ref{teo:1} can be improved to
\begin{equation}\label{eq:derivordre4}
||\dot F-F'||_\infty \leq  O(\hat{h}^4)
\end{equation}

\begin{corollary}
The cubic Hermite interpolant \eqref{Hermite} obtained using the approximations to the derivatives solving the system \eqref{primersistema} has order of accuracy $O(\hat{h}^4)$.
\end{corollary}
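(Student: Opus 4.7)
The plan is to obtain the result as an immediate consequence of the two tools already assembled in the section: Theorem \ref{teo:1}, which controls the error in the computed nodal derivatives, and Lemma \ref{le:1}, which translates a nodal derivative error into an interpolation error on each subinterval.

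First I would apply Theorem \ref{teo:1} directly: under the stated hypotheses ($f\in C^4$, $\hat h<1$, quasi-uniform mesh with ratio bounded by $K$), the solution $\dot F$ of system \eqref{primersistema} satisfies $\|\dot F - F'\|_\infty = O(\hat h^3)$. Componentwise this means $\dot f_i = f'(x_i) + O(\hat h^3)$ for every interior index $i=2,\ldots,n-1$. At the two endpoints the boundary conditions \eqref{splinesconditions2}--\eqref{splinesconditions3} force $\dot f_1 = f'(x_1)$ and $\dot f_n = f'(x_n)$ exactly, so these values are trivially of order $O(\hat h^q)$ for every $q$.

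Next I would invoke Lemma \ref{le:1} on each subinterval $[x_i,x_{i+1}]$, $i=1,\ldots,n-1$. With $q_i,q_{i+1}\ge 3$ at every knot (and effectively $+\infty$ at the two boundary knots), the lemma yields
\begin{equation*}
P_i(x) = f(x) + O\bigl(\hat h^{\,\min(4,\,q_i+1,\,q_{i+1}+1)}\bigr) = f(x) + O(\hat h^4)
\end{equation*}
uniformly on $[x_i,x_{i+1}]$. Taking the maximum over $i$ then gives the global $O(\hat h^4)$ bound claimed by the corollary.

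There is no real obstacle: the corollary is essentially a packaging statement combining Theorem \ref{teo:1} with Lemma \ref{le:1}. The only point worth stating explicitly in the write-up is that the order-three accuracy of the interior computed derivatives, together with the exactness at the two endpoints, feeds into Lemma \ref{le:1} with $q_i+1\ge 4$ at every relevant knot, so the $\min$ in \eqref{eq:ordre} is always attained by the value $4$ and no subinterval degrades the global order. (One could additionally remark that if the grid is uniform the estimate \eqref{eq:derivordre4} still yields order $4$, since Lemma \ref{le:1} caps the order at $4$ regardless.)
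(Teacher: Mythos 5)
Your proof is correct and follows exactly the paper's route: the paper's own proof is the one-line observation that the corollary is "direct by Lemma \ref{le:1} and Theorem \ref{teo:1}", which is precisely the combination you spell out (order-$3$ interior derivatives plus exact endpoint derivatives feed into the $\min(4,q_i+1,q_{i+1}+1)$ of \eqref{eq:ordre} to give $4$). Your additional remark that the uniform-grid improvement \eqref{eq:derivordre4} is not needed because Lemma \ref{le:1} caps the order at $4$ anyway is a correct and worthwhile clarification, but the argument is the same as the paper's.
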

\begin{proof}
This is direct by Lemma \ref{le:1} and Theorem \ref{teo:1}.
\end{proof}

Now, we indicate the following conditions on the values $\dot f_i$, $i=2,\hdots,n-2$, in order to obtain a monotone interpolator in each interval $[x_i, x_{i+1}]$. We can prove the following result using  Theorem \ref{BoorSwa}.
\begin{theorem}
If for all $2\leq i \leq n-2$,
\begin{equation}
\left|\dot f_i\right|=\left|\sum_{j=1}^{n-2}A^{-1}_{ij}b_j\right|\leq 3\min (|m_{i-1}|,|m_{i}|)
\end{equation}
then the resulting cubic Hermite interpolant \eqref{Hermite} is monotone.
\end{theorem}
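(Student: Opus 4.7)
The plan is to observe that this theorem is essentially a direct application of Theorem \ref{BoorSwa} to the specific derivative values produced by the spline system \eqref{primersistema}, so the proof is short and mostly bookkeeping rather than computation.

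First I would note that, as remarked immediately after \eqref{splinesystem}, the matrix $A$ is irreducibly diagonally dominant (since $\lambda_i + \mu_i = 1 < 2$ for every $i$) and hence non-singular. Therefore the unique solution of $A\dot F = B$ is $\dot F = A^{-1}B$, and writing the equation componentwise yields
\begin{equation*}
\dot f_i = \sum_{j=1}^{n-2} A^{-1}_{ij}\, b_j, \qquad i = 2,\dots,n-1,
\end{equation*}
which justifies the equality inside the hypothesis of the theorem.

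Second, the hypothesis assumes precisely that $|\dot f_i| \le 3\min(|m_{i-1}|,|m_i|)$ for every interior index $2 \le i \le n-2$. This is exactly the sufficient condition \eqref{eq:BoorSwa2} of Theorem \ref{BoorSwa}. Moreover, by construction of the spline the remaining two derivative values are fixed by the boundary data as $\dot f_1 = f'_1$ and $\dot f_n = f'_n$, so the Hermite data $\{(x_i,f_i,\dot f_i)\}$ on each subinterval $[x_i,x_{i+1}]$ is well defined.

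Finally, applying Theorem \ref{BoorSwa} to these derivative values delivers the monotonicity of the Hermite interpolant \eqref{Hermite} on each interval $[x_i,x_{i+1}]$ in the range for which the theorem applies, which is the claim. The only step requiring a small comment is the first one (recognising that $\dot F = A^{-1}B$ makes sense, i.e.\ invertibility of $A$); beyond that the argument is an invocation of Theorem \ref{BoorSwa}, so I do not expect any genuine obstacle.
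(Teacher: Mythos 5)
Your proposal is correct and is exactly the argument the paper intends: the paper states this result as a direct application of Theorem \ref{BoorSwa} (the condition $|\dot f_i|\le 3\min(|m_{i-1}|,|m_i|)$ applied to the components of $\dot F=A^{-1}B$, which is well defined since $A$ is irreducibly diagonally dominant and hence invertible) and gives no further proof. Your added care about which subintervals the monotonicity conclusion actually covers is a reasonable refinement of the paper's looser statement, but the route is the same.
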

 These conditions, in general, are not satisfied.

If there exist any points  where the computed approximations of the derivative do not satisfy the sufficient conditions we will analyze two possibilities:
\begin{enumerate}
	 \item Change some of the derivative approximations, obtained by solving \eqref{primersistema}, by other values that produce a monotone interpolant. Hence, we define  for $i=2,\hdots, n-2$:
	\begin{equation}\label{primersplinenolineal}
	\begin{array}{l}
	\dot{f}_{i}^{M}:=\left \{
	\begin{array}{cl}
	\sum_{j=1}^{n-2}A^{-1}_{ij}b_j, &
	\hbox{if $ |\sum_{j=1}^{n-2}A^{-1}_{ij}b_j|\leq 3\min(|m_{i-1}|,|m_{i}|) $;} \\
	\tilde{f}_i, & \hbox{otherwise, }  \\
	\end{array}%
	\right.
	\end{array}
	\end{equation}
	where  $\tilde{f}_i$ is a  value calculated using non-linear techniques that will be explained in Section \ref{sec:derivatives2}.
	We only modify the approximations of the derivative values at the points where monotonicity constraints were not satisfied. In this way the interpolant keeps  the maximum order in every interval. With this method, the regularity is reduced to $C^1$ in a neighborhood of each point where the approximation of the derivative is modified. We analyze this case in Section \ref{monotonemaxorder}.

\item Another possibility is to try to keep $C^2$ regularity in the complete interval $[x_1, x_n]$ except at the single points where the approximations of the derivatives have been changed.
	Our proposal is to change the values where monitonicity was lost as in the previous case, rewrite  system \eqref{primersistema} but eliminating the modified points, and solve it. Afterwards, we again study if the new values satisfy the monotonicity conditions and repeat the process. We will prove in Section \ref{monotonemaxreg} that the order is lost in a neighborhood of the conflicting points, but is conserved at the rest.

\end{enumerate}

\subsection{Monotone spline with maximum order}\label{monotonemaxorder}

Assume that there exists a point $x_{i_0}$, with $1<i_0<n$, where the approximation to the derivative does not satisfy the conditions of the Theorem \ref{BoorSwa}. In that case, we change the value $\dot f_{i_0}$ by another value $\tilde f_{i_0}$. As a result the following equalities are not necessarily satisfied:
\begin{equation*}
\begin{split}
P''_{i_0-2}(x_{i_0-1})&=P''_{i_0-1}(x_{i_0-1}),\\
P''_{i_0-1}(x_{i_0})&=P''_{i_0}(x_{i_0}),\\
P''_{i_0}(x_{i_0+1})&=P''_{i_0+1}(x_{i_0+1}).
\end{split}
\end{equation*}
Thus, the regularity is $C^2$ in all points excepted at $x_{i_0+l}$ with $l=-1,0,1$. Finally, by Lemma \ref{le:1} the order is 4 except in the intervals $I_{i_0+l}$ with $l=-1,0,1$. We recapitulate these results in the following proposition.

\begin{proposition}\label{propo:maximoorden}
	Let us assume that $f(x)\in C^{4}([x_1,x_n])$ and let $L>0$ be such that $|f^{(4)}(x)|\leq L$, for all $x\in[x_{1},x_n]$. Let $p_{i_0},T>0$ and $A$, $\dot F$, $B$ be as defined in Eq. \eqref{splinesystem},
	which satisfy that $A\dot F=B$. If we define
	\begin{equation*}
	\dot F_{i_0}=\left\{
	\begin{array}{ll}
	\tilde{f}_{i_0}, &   i=i_0;\\
	\dot f_{i}, & i\neq i_0,
	\end{array}
	\right.
	\end{equation*}
	such that $|{f}'_{i_0}-\tilde{f}_{i_0}|=T\cdot \hat{h}^{p_{i_0}}$  and there exists $K>0$ such that $\hat{h}/h_i \leq K$ for all $i=1,\hdots,n$ then:
	\begin{equation}
	|\dot f_i-f'_i|=\left\{
	\begin{array}{ll}
	O(\hat{h}^{p_{i_0}}), &   i=i_0;\\
	O(\hat{h}^{3}), & i\neq i_0.
	\end{array}
	\right.
	\end{equation}
	Also, the cubic spline interpolator defined in Eq. \eqref{Hermite} using $F_{i_0}$ as an approximation of the values of the first derivatives, has $C^2$ regularity except at the points $x_{i_0+l}$, $l=-1,0,1$.
\end{proposition}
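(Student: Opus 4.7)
The plan is to split the proposition into its two claims --- the componentwise error on the approximate derivative vector $\dot F_{i_0}$, and the piecewise $C^2$ regularity of the resulting spline --- and to exploit the fact that $\dot F_{i_0}$ differs from the exact solution $\dot F$ of \eqref{primersistema} in a single entry. Both parts are essentially bookkeeping: the first leverages the previously proved Theorem \ref{teo:1}, and the second uses the local dependence of each Hermite piece on the derivative data.

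For the derivative bound I would simply read off the two cases. At $i = i_0$ the entry of $\dot F_{i_0}$ is by construction $\tilde f_{i_0}$, and the assumption $|f'_{i_0} - \tilde f_{i_0}| = T\,\hat h^{p_{i_0}}$ is exactly the required $O(\hat h^{p_{i_0}})$ estimate. For every $i \neq i_0$ the $i$-th entry of $\dot F_{i_0}$ coincides with the $i$-th entry of $\dot F$, the exact solution of $A\dot F = B$. Since the hypotheses $f \in C^4([x_1,x_n])$, $|f^{(4)}| \leq L$ and $\hat h / h_j \leq K$ are precisely those of Theorem \ref{teo:1}, one immediately obtains $|\dot f_i - f'_i| \leq ||\dot F - F'||_\infty = O(\hat h^3)$.

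For the regularity I would exploit the local nature of the Hermite construction: each piece $P_j$ on $[x_j, x_{j+1}]$ defined by \eqref{Hermite}--\eqref{Hermitecoef} depends only on the data $(f_j, f_{j+1}, \dot f_j, \dot f_{j+1})$. Therefore, replacing only the entry $\dot f_{i_0}$ by $\tilde f_{i_0}$ modifies exclusively the two polynomials $P_{i_0-1}$ and $P_{i_0}$ and leaves every other $P_j$ untouched. The $C^0$ and $C^1$ matching at every interior knot is automatic from the Hermite interpolation conditions, so only $C^2$ continuity is at stake. Before modification, the equalities $P_{j-1}''(x_j) = P_j''(x_j)$ hold for all $j = 2, \ldots, n-1$, because $\dot F$ solves precisely the system \eqref{primersistema} that encodes these relations. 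If $j \notin \{i_0-1, i_0, i_0+1\}$, neither of the flanking pieces $P_{j-1}$, $P_j$ has been altered, so the $C^2$ matching at $x_j$ persists; at each of the three knots $x_{i_0 + l}$, $l \in \{-1, 0, 1\}$, at least one of the two flanking pieces is a modified one and the second-derivative matching can fail. This gives exactly the stated regularity.

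I do not expect any substantive obstacle in executing this plan; the only mild care needed is the index bookkeeping when $i_0$ lies close to $1$ or $n$, in which case one or two of the three affected knots fall outside the range of interior knots and the conclusion holds vacuously at them.
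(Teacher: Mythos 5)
Your proposal is correct and follows essentially the same route as the paper: the paper's own justification (given in the discussion preceding the proposition) likewise notes that only $\dot f_{i_0}$ is altered, so the unmodified entries inherit the $O(\hat h^3)$ bound from Theorem \ref{teo:1}, while the two Hermite pieces $P_{i_0-1}$, $P_{i_0}$ are the only ones affected, so $C^2$ matching can only fail at $x_{i_0+l}$, $l=-1,0,1$. Your write-up is in fact somewhat more explicit than the paper's, which states these facts without a formal proof.
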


\subsection{Monotone spline with maximum regularity}\label{monotonemaxreg}

As a second option we replace the values for which the approximate first derivative does not satisfy the conditions in Theorem \ref{BoorSwa}, and recalculate the remaining values by rewriting  system \eqref{primersistema} in a way such that the equations corresponding to the changed values are removed from the system. Thus, we suppose that the approximation to the first derivative does not satisfy the sufficient conditions at $i_0$ with $1<i_0<n$. Consequently, we calculate $\dot{f}_{i_0}^{M}=\tilde{f}_{i_0}$ and define $A_{i_0^-} \in \mathbb{R}^{(i_0-2)\times (i_0-2)}$ and $A_{i_0^+} \in \mathbb{R}^{(n-1-i_0)\times (n-1-i_0)}$ by:
\begin{equation}\label{eq1subp}
A_{i_0^-}=\left[\begin{array}{cccccc}
2&\mu_1&&&&\\
\lambda_2&2&\mu_2&&&\\
&\lambda_3&2&\mu_3&\\
&&\ddots&\ddots &\ddots&\\
&&&\lambda_{i_0-3}&2&\mu_{i_0-3}\\
&&&&\lambda_{i_0-2}&2\\
\end{array}
\right],\,\, A_{i_0^+}=\left[\begin{array}{ccccccc}
2 & \mu_{i_0}&&&&&\\
\lambda_{i_0+1}&2&\mu_{i_0+1}&&&&\\
&\lambda_{i_0+2}&2&\mu_{i_0+2}&&&\\
&&&\ddots&\ddots &\ddots&\\
&&&&\lambda_{n-3}&2&\mu_{n-3}\\
&&&&&\lambda_{n-2}&2\\
\end{array}
\right],
\end{equation}
and the vectors:
\begin{equation}\label{eq2subp}
\quad \dot F_{i_0^-}=\left[\begin{array}{c}
\dot{f}_2\\
\dot{f}_3 \\
\dot{f}_4 \\
\vdots\\
%M_{u-1}\\M_u \\M_{u+1} \\\vdots\\
%M_{v-1}\\M_v \\M_{v+1} \\\vdots\\
\dot{f}_{i_0-3}\\
\dot{f}_{i_0-2}\\
\dot{f}_{i_0-1}
\end{array}
\right],\quad B_{i_0^-}=\left[\begin{array}{c}
 b_2\\
 b_3\\
 b_4 \\
%\vdots\\d_{u-1}\\d_u \\d_{u+1} \\
%\vdots\\d_{v-1}\\d_v \\d_{v+1} \\
\vdots
\\
b_{i_0-3}\\
b_{i_0-2}\\
b_{i_0-1}
\end{array}
\right] \quad \dot F_{i_0^+}=\left[\begin{array}{c}
\dot{f}_{i_0+1}\\
\dot{f}_{i_0+2} \\
\dot{f}_{i_0+3} \\
\vdots\\
%M_{u-1}\\M_u \\M_{u+1} \\\vdots\\
%M_{v-1}\\M_v \\M_{v+1} \\\vdots\\
\dot{f}_{n-3}\\
\dot{f}_{n-2}\\
\dot{f}_{n-1}
\end{array}
\right],\quad B_{i_0^+}=\left[\begin{array}{c}
 b_{i_0+1}\\
 b_{i_0+2}\\
 b_{i_0+3} \\
%\vdots\\d_{u-1}\\d_u \\d_{u+1} \\
%\vdots\\d_{v-1}\\d_v \\d_{v+1} \\
\vdots
\\
b_{n-3}\\
b_{n-2}\\
b_{n-1}
\end{array}
\right],
\end{equation}
\text{being:}
\begin{equation}\label{eq3subp}
\begin{split}
&b_2=3(\lambda_1 \, m_{1} +\mu_1 \, m_{2})-\lambda_1 \,f'_1,\\
&b_{n-1}=3(\lambda_{n-2} \, m_{n-2} +\mu_{n-2} \, m_{n-1})-\mu_{n-2} \, f'_n,\\
&b_{i_0-1}=3(\lambda_{i_0-2} \, m_{i_0-2} +\mu_{i_0-2} \, m_{i_0-1})-\mu_{i_0-2}\tilde{f}_{i_0},\\
&b_{i_0+1}=3(\lambda_{i_0} \, m_{i_0} +\mu_{i_0} \, m_{i_0+1})-\lambda_{i_0}\tilde{f}_{i_0},\\
&b_{i-1}=3(\lambda_{i-1} \, m_{i-1} +\mu_{i-1} \, m_{i}), \,\, \text{for} \,\,2<i<n-1,\,\,i\neq i_0+l,\,\,l=-1,0,1.\\
\end{split}
\end{equation}
With these variables we can rewrite the new system as $A_{i_0}\dot F_{i_0}=B_{i_0}$ where:
\begin{equation}\label{splinesystem21}
A_{i_0}=\left[\begin{array}{cc}
A_{i_0^-}& \\
&A_{i_0^+}\\
\end{array}
\right], \dot F_{i_0}=\left[\begin{array}{c}
\dot F_{i_0^-}\\
\dot F_{i_0^+}
\end{array}
\right], B_{i_0}=\left[\begin{array}{c}
B_{i_0^-}\\
 B_{i_0^+}
\end{array}
\right].
\end{equation}

We will prove that there exists a set of intervals around of $x_{i_0}$ where the order of the approximation to the first derivative is affected because of the modification of
 $\dot f_{i_0}$ but is maintained at the points that are sufficiently separated from the discontinuity.

  We adapt the results obtained in \cite{kershaw1,kershaw2,kershaw3}. For this, we divide the system $A_{i_0}\dot F_{i_0}=B_{i_0}$ in two subsystems and analyze them separately. Each subsystem is similar to the system obtained to construct a spline with different boundary conditions.

The following result \cite{kershaw1} provides a bound of the elements of the inverse.
\begin{lemma}\label{lemma2}
Let be $0<m \in \mathbb{N}$, $0\leq \mu_i,\lambda_i\leq 1$ with $1\leq i\leq m$ such that
$$\lambda_i+\mu_i=1\quad i=1,\hdots,m,$$
and $A\in \mathbb{R}^{m}\times \mathbb{R}^{m}$ defined
 by \ref{eq:matriuA}. If the elements of $A^{-1}$ are denoted by $A^{-1}_{ij}$ then $$|A_{ij}^{-1}|\leq \frac{2}{3}\cdot2^{-|i-j|},\quad 1\leq i,j\leq m. $$
\end{lemma}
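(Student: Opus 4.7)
The plan is to fix a column index $j$, let $v := A^{-1} e_j$ be the $j$-th column of $A^{-1}$, and read the bound $|A^{-1}_{ij}| = |v_i| \leq \tfrac{2}{3}\,2^{-|i-j|}$ off two facts about $v$: geometric decay of $|v_k|$ as $k$ moves away from $j$, and a tight bound $|v_j| \leq 2/3$ on the central value. The entries $v_k$ satisfy the three-term recurrence $\lambda_k v_{k-1} + 2 v_k + \mu_k v_{k+1} = \delta_{kj}$ (with the conventions $v_0 = v_{m+1} = 0$), which is homogeneous except at the spike row $k = j$, and this is the object I would analyze.

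To obtain the decay, I would first argue that consecutive entries of $v$ have opposite signs on either side of $j$. This is an induction starting from $v_0 = 0$: the row-$1$ equation (homogeneous when $j > 1$) forces $v_1 = -(\mu_1/2)\,v_2$, so $v_1$ and $v_2$ have opposite signs, and propagating the homogeneous recurrence preserves the pattern up to $v_{j-1}, v_j$. Setting $t_k := -v_{k+1}/v_k > 0$ for $k = 0, \ldots, j-1$ (with $t_0 = +\infty$ formally, so that $t_1 = 2/\mu_1 \geq 2$), the recurrence becomes
\begin{equation*}
t_k = \frac{2}{\mu_k} - \frac{\lambda_k}{\mu_k\,t_{k-1}},
\end{equation*}
and I would prove $t_k \geq 2$ by induction. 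The inductive step is where the hypothesis $\lambda_k + \mu_k = 1$ is used essentially: assuming $t_{k-1} \geq 2$,
\begin{equation*}
t_k \;\geq\; \frac{2}{\mu_k} - \frac{\lambda_k}{2\mu_k} \;=\; \frac{4-\lambda_k}{2(1-\lambda_k)} \;\geq\; 2 \iff 3\lambda_k \geq 0,
\end{equation*}
which holds. Hence $|v_k| \leq \tfrac{1}{2}|v_{k+1}|$ for $k < j$, and by iteration $|v_k| \leq 2^{-(j-k)}|v_j|$. Mirroring the argument from the right boundary $v_{m+1}=0$, using $t'_k := -v_{k-1}/v_k$ for $k > j$ (which satisfies the analogous recurrence with $\lambda_k$ and $\mu_k$ interchanged), gives $|v_k| \leq 2^{-(k-j)}|v_j|$ on the other side.

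For the central value, the row-$j$ equation together with the opposite-sign relations and $|v_{j\pm 1}| \leq |v_j|/2$ just established yields, after taking absolute values,
\begin{equation*}
2|v_j| \;=\; 1 + \lambda_j|v_{j-1}| + \mu_j|v_{j+1}| \;\leq\; 1 + \frac{\lambda_j+\mu_j}{2}\,|v_j| \;=\; 1 + \tfrac{1}{2}|v_j|,
\end{equation*}
so $|v_j| \leq 2/3$. Combining the two estimates gives the desired bound $|A^{-1}_{ij}| \leq \tfrac{2}{3}\,2^{-|i-j|}$.

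The main obstacle I anticipate is handling the degenerate cases in which some $\mu_k$ or $\lambda_k$ vanishes, because the recurrence for $t_k$ then has a zero denominator. However, if $\mu_k = 0$ for some $k < j$, row $k$ decouples $A$ into blocks and forces $v_1 = \cdots = v_k = 0$, so the bound is trivially satisfied for those indices and the decay argument simply restarts from $k+1$; the dual situation applies on the right. The boundary columns $j = 1$ and $j = m$ also need a one-sided variant of the $|v_j| \leq 2/3$ step, in which only $\mu_1 \leq 1$ (respectively $\lambda_m \leq 1$) is invoked; a direct computation shows the bound still holds. Apart from these bookkeeping points, the proof is driven entirely by the strict diagonal dominance encoded in $\lambda_k + \mu_k = 1$.
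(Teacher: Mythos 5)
The paper does not actually prove this lemma: it is quoted from Kershaw \cite{kershaw1} without proof, so there is no internal argument to compare against. Your proof is correct and self-contained. Kershaw's original derivation represents the entries of the inverse of a tridiagonal matrix as ratios of leading/trailing principal minors and bounds the ratios of consecutive minors; your quantities $t_k$ are precisely those ratios, so the substance is close, but your packaging via the column $v=A^{-1}e_j$ and the three-term recurrence $\lambda_k v_{k-1}+2v_k+\mu_k v_{k+1}=\delta_{kj}$ is cleaner and supplies the proof the paper omits. Two small remarks. First, the sign-alternation and the ratios $t_k$ are dispensable: the induction can be run directly on absolute values, since for a homogeneous row $2|v_k|\le\lambda_k|v_{k-1}|+\mu_k|v_{k+1}|$ together with $|v_{k-1}|\le\tfrac12|v_k|$ gives $\left(2-\tfrac{\lambda_k}{2}\right)|v_k|\le\mu_k|v_{k+1}|$, hence $|v_k|\le\frac{2(1-\lambda_k)}{4-\lambda_k}\,|v_{k+1}|\le\tfrac12|v_{k+1}|$; this removes in one stroke all the degenerate-case bookkeeping about vanishing $\mu_k$ or vanishing entries that your ratio formulation forces you to discuss. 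Second, the displayed equality $2|v_j|=1+\lambda_j|v_{j-1}|+\mu_j|v_{j+1}|$ tacitly assumes $v_j>0$ (true, since $A^{-1}_{jj}$ is a ratio of positive principal minors of a diagonally dominant matrix, but not established in your sketch); fortunately only the inequality $\le$ is needed there, and that is the triangle inequality applied to row $j$ with no sign information at all. With these cosmetic adjustments the argument is complete and yields $|A^{-1}_{ij}|\le\tfrac23\cdot 2^{-|i-j|}$ as claimed.
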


\begin{remark}
In the case of uniform grid, i.e., when $h_i=h_{i-1}$, for all $i=2,\hdots,n$, then the bound can be improved. In \cite{kershaw1}, it is proved that:
\begin{equation}\label{cotanoespaciada}
|A_{ij}^{-1}|\leq \frac{2}{3}\cdot(2+\sqrt{3})^{-|i-j|},\quad 1\leq i,j\leq m.
\end{equation}
\end{remark}

\begin{proposition}\label{propo1}
Let us assume that $\hat{h}<1$, $f(x)\in C^{4}([x_1,x_{i_0}])$ and let $L>0$ be such that $|f^{(4)}(x)|\leq L$, for all $x\in[x_{1},x_{i_0-1}]$. Let  $p_{i_0},T>0$ and $A_{i_0^-}$, $\dot F_{i_0^-}$, $B_{i_0^-}$ be as defined in Eqs. \eqref{eq1subp}, \eqref{eq2subp} and \eqref{eq3subp}, which satisfy $|{f}'_{i_0}-\tilde{f}_{i_0}|=T\cdot \hat{h}^{p_{i_0}}$ and $A_{i_0^-}\dot F_{i_0^-}=B_{i_0^-}$. If $i_0>3-\log_2(\hat{h})$ and there exists $K>0$, such that $\hat{h}/h_i \leq K$ for all $i=1,\hdots,i_0-1$ then:

and  $\hat{h}<1$

There exists an integer $l_0<i_0-1$, such that:
\begin{equation}
|\dot f_i-f'_i|=\left\{
  \begin{array}{ll}
    O(\hat{h}^{\min(3,p_{i_0}+1)}), & 2\leq i \leq l_0; \\
    O(\hat{h}^{p_{i_0}}), &   l_0< i \leq i_0-1.
  \end{array}
\right.
\end{equation}
\end{proposition}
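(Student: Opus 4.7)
The plan is to mirror the error analysis from Theorem~\ref{teo:1}, applied now to the reduced left subsystem $A_{i_0^-}\dot F_{i_0^-}=B_{i_0^-}$, and then exploit the geometric decay of the inverse entries furnished by Lemma~\ref{lemma2} to separate a ``far-from-$x_{i_0}$'' regime, in which the intrinsic truncation error $O(\hat h^3)$ dominates, from a ``close-to-$x_{i_0}$'' regime, in which the perturbation injected at the last row of the system propagates with only constant attenuation.

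First, I would introduce the residual $r := B_{i_0^-} - A_{i_0^-} F'_{i_0^-}$, so that $\dot F_{i_0^-} - F'_{i_0^-} = A_{i_0^-}^{-1} r$. For $k=1,\ldots,i_0-3$ the $k$-th equation of the reduced system coincides with the corresponding equation of the full system \eqref{primersistema}, hence $r_k = R(k+1)$ and Lemma~\ref{lema1sempre} yields $|r_k| \leq C\hat h^3$ for some constant $C=C(K,L)$. At the last row $k=i_0-2$ the right-hand side is modified by the replacement of $f'_{i_0}$ by $\tilde f_{i_0}$; adding and subtracting $\mu_{i_0-2}\,f'_{i_0}$ gives
\[
r_{i_0-2} \;=\; R(i_0-1) \,+\, \mu_{i_0-2}\bigl(f'_{i_0} - \tilde f_{i_0}\bigr),
\]
so that $|r_{i_0-2}| \leq C\hat h^3 + T\hat h^{p_{i_0}}$.

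Next I would apply Lemma~\ref{lemma2} entrywise: for $k\in\{1,\ldots,i_0-2\}$,
\[
\bigl|(\dot F_{i_0^-}-F'_{i_0^-})_k\bigr| \;\leq\; \frac{2}{3}\sum_{j=1}^{i_0-3} 2^{-|k-j|}\, C\hat h^3 \;+\; \frac{2}{3}\cdot 2^{-(i_0-2-k)}\bigl(C\hat h^3 + T\hat h^{p_{i_0}}\bigr).
\]
The first sum is a geometric series bounded by an absolute constant, yielding an $O(\hat h^3)$ contribution uniformly in $k$; the $O(\hat h^3)$ part of the second term is similarly absorbed. The remaining perturbation contribution is $O\bigl(2^{-(i_0-2-k)}\hat h^{p_{i_0}}\bigr)$, and it is this term that produces the two-regime behavior.

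Finally, writing $i=k+1$ and demanding that the perturbation contribution be no worse than $O(\hat h^{p_{i_0}+1})$ amounts to $2^{-(i_0-1-i)}\leq \hat h$, i.e.\ $i \leq i_0-1+\log_2 \hat h$, so I would take $l_0 := \lfloor i_0-1+\log_2 \hat h\rfloor$. For $2\leq i\leq l_0$ the two contributions combine to give $O\bigl(\hat h^{\min(3,\,p_{i_0}+1)}\bigr)$; for $l_0 < i \leq i_0-1$ the exponential factor reduces the perturbation bound only by at most a constant, and one obtains $O(\hat h^{p_{i_0}})$. The hypothesis $i_0 > 3 - \log_2 \hat h$ is precisely the condition that guarantees $l_0 \geq 2$, so that the good region is nonempty. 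The main obstacle is the clean bookkeeping of the two coexisting error scales in $r$ and the verification that the decay constant $2$ from Lemma~\ref{lemma2} is strong enough to produce exactly one extra power of $\hat h$ in the far regime: the argument would collapse to a uniform $O(\hat h^{p_{i_0}})$ bound if $A_{i_0^-}^{-1}$ had only polynomial, rather than geometric, off-diagonal decay.
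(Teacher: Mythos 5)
Your proposal is correct and follows essentially the same route as the paper: form the residual of the reduced system, bound the first $i_0-3$ components by $O(\hat h^3)$ via Lemma~\ref{lema1sempre}, isolate the extra $\mu_{i_0-2}(f'_{i_0}-\tilde f_{i_0})$ term in the last row, and then use the geometric decay of $|A_{i_0^-}^{-1}|$ from Lemma~\ref{lemma2} to gain one power of $\hat h$ for indices $i\leq i_0-1+\log_2\hat h$, which is exactly the paper's choice of $l_0$. Your version is in fact slightly more explicit than the paper's in bounding the $O(\hat h^3)$ contribution by summing the geometric series of inverse entries and in noting that $i_0>3-\log_2\hat h$ is what makes the far regime nonempty.
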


\begin{proof}
We define $r_{i_0^-}=((r_{i_0^-})_2, \hdots, (r_{i_0^-})_{i_0-1})^T$ as:

$$r_{i_0^-}=A_{i_0^-}(\dot F_{i_0^-}-F'_{i_0^-})=A_{i_0^-}\dot F_{i_0^-} - AF_{i_0^-}'=B_{i_0^-}-AF_{i_0^-}'.$$

Let $i$ be such that $2< i \leq i_0-2 $. By Lemma \ref{lema1sempre}:
\begin{equation}\label{eqteo1}
\begin{split}
|(r_{i_0^-})_i|=&|b_i-\lambda_{i-1} \,f'_{i-1}-2 f'_i-\mu_{i-1} f'_{i+1}|
=|3\lambda_{i-1} \, m_{i-1} +3\mu_{i-1} \, m_{i}-\lambda_{i-1} \,f'_{i-1}-2 f'_{i} -\mu_{i-1} f'_{i+1}|\\
=&|R(i)| \leq  \left(\frac{17\,K+K^2}{16}+1\right)L \hat{h}^3= O(\hat{h}^3).
\end{split}
\end{equation}
The result is proved analogously  for $i=2$. In the case $i=i_0-1$ we take
\begin{equation}\label{eqteo2}
\begin{split}
|(r_{i_0^-})_{i_0-1}|=&|b_{i_0-1}-\lambda_{i_0-2} \,f'_{i_0-2}-2 f'_{i_0-1}|\\
=&|b_{i_0-1}-\lambda_{i_0-2} \,f'_{i_0-2}-2 f'_{i_0-1}+(-\mu_{i_0-2}f'_{i_0}+\mu_{i_0-2}f'_{i_0})|\\
=& \left|\left(3(\lambda_{i_0-2} \, m_{i_0-2} +\mu_{i_0-2} \, m_{i_0-1})-\lambda_{i_0-2} \,f'_{i_0-2}-2 f'_{i_0-1}-\mu_{i_0-2}f'_{i_0}\right)+\mu_{i_0-2}(f'_{i_0}-\tilde{f}_{i_0})\right|\\
   \leq & |R(i_0-1)| +\mu_{i_0-2}\left|f'_{i_0}-\tilde{f}_{i_0}\right| \\
\leq & \left(\frac{17\,K+K^2}{16}+1\right)L \hat{h}^3+ \mu_{i_0-2}\cdot T\cdot h^{p_i}= O(\hat{h}^3)+\mu_{i_0-2}\cdot T\cdot h^{p_i}.
\end{split}
\end{equation}
Now, from
$$2^{i-(i_0-2)} \leq \hat{h} \leftrightarrow (i-(i_0-2))\log(2)\leq\log(\hat{h})\leftrightarrow i\leq(i_0-2)+\frac{\log(\hat{h})}{\log(2)}=(i_0-2)+\log_2(\hat{h})=:\tilde l_0$$
Also, we impose that
\begin{equation*}
1<\tilde l_0=(i_0-2)+{\log_2(\hat{h})} \rightarrow 3-{\log_2(\hat{h})}<i_0.
\end{equation*}
Then, by Lemma \ref{lemma2} we have for $1\leq i \leq \tilde l_0$:
\begin{equation*}
|A^{-1}_{i\,i_0-2}|\leq \frac23 2^{i-(i_0-2)} \leq \hat{h}
\end{equation*}
and by Eqs. \eqref{eqteo1} and \eqref{eqteo2}  if $1\leq i \leq \tilde l_0$
\begin{equation*}
|\dot f_{i+1}-f'_{i+1}|= \left|\sum_{j=1}^{i_0-2}A^{-1}_{i,j}r_{j+1}\right|\leq O(\hat{h}^3)+|A^{-1}_{i\,i_0-2}|\cdot T\cdot \hat{h}^{p_i}\leq O(\hat{h}^3)+\hat{h} \cdot O(\hat{h}^{p_i})=O(\hat{h}^{\min{(3,p_i+1)}}).
\end{equation*}
Thus, we define $l_0=\tilde l_0+1$ and obtain:
\begin{equation}
|\dot f_i-f'_i|=\left\{
  \begin{array}{ll}
    O(\hat{h}^{\min(3,p_{i_0}+1)}), & 2\leq i \leq l_0; \\
    O(\hat{h}^{p_{i_0}}), &   l_0< i \leq i_0-1.
  \end{array}
\right.
\end{equation}

\end{proof}

\begin{proposition}\label{propo2}
Let us assume that $\hat{h}<1,$ $f(x)\in C^{4}([x_{i_0+1},x_n])$  and let $L>0$ be such that $|f^{(4)}(x)|\leq L$, for all $x\in[x_{i_0+1}, x_n].$ Let  $p_{i_0},T>0$ and $A_{i_0^+}$, $\dot F_{i_0^+}$, $B_{i_0^+}$ be as defined in Eqs. \eqref{eq1subp}, \eqref{eq2subp} and \eqref{eq3subp},
which satisfy  $|{f}'_{i_0}-\tilde{f}_{i_0}|=T\cdot \hat{h}^{p_{i_0}}$ and $A_{i_0^+}\dot F_{i_0^+}=B_{i_0^+}$. If $n>i_0+1-\log_2(\hat{h})$ and there exist $K>0$, such that $\hat{h}/h_i \leq K$ for all $i=i_0+1,\hdots,n$ then:

There exists an integer $i_0+1<l_1$, such that:
\begin{equation}
|\dot f_i-f'_i|=\left\{
  \begin{array}{ll}
    O(\hat{h}^{\min(3,p_{i_0}+1)}), & l_1\leq i \leq n-1; \\
    O(\hat{h}^{p_{i_0}}), &   i_0+1\leq  i < l_1.
  \end{array}
\right.
\end{equation}
\end{proposition}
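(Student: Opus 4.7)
The plan is to mirror the argument of Proposition \ref{propo1}, but for the ``right-hand'' subsystem $A_{i_0^+}\dot F_{i_0^+}=B_{i_0^+}$. First I would introduce the residual
\[
r_{i_0^+} := A_{i_0^+}(\dot F_{i_0^+}-F'_{i_0^+}) = B_{i_0^+}-A_{i_0^+}F'_{i_0^+},
\]
whose components I will label by the original index $i=i_0+1,\dots,n-1$ (row $k=i-i_0$ of the subsystem). For every row different from the first (that is, $i_0+2\le i\le n-1$), the equation of $r_{i_0^+}$ at that row is exactly $R(i)$ in the sense of \eqref{equationR}, together with the boundary-type correction at $i=n-1$ handled precisely as in Theorem \ref{teo:1}. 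Hence Lemma \ref{lema1sempre} yields $|(r_{i_0^+})_i|\le \big(\tfrac{17K+K^2}{16}+1\big)L\hat h^{3}=O(\hat h^{3})$.

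The only row that requires extra work is the first one, $i=i_0+1$, because it now uses $\tilde f_{i_0}$ in place of $\dot f_{i_0}$. Using the definition of $b_{i_0+1}$ in \eqref{eq3subp} and inserting $\pm\lambda_{i_0}f'_{i_0}$, I would rewrite
\[
(r_{i_0^+})_{i_0+1} = R(i_0+1) + \lambda_{i_0}(f'_{i_0}-\tilde f_{i_0}),
\]
so that $|(r_{i_0^+})_{i_0+1}|\le O(\hat h^{3})+\lambda_{i_0}T\hat h^{p_{i_0}}$. Thus the residual vector has a single ``contaminated'' entry of size $O(\hat h^{p_{i_0}})$ sitting at position $k=1$, and all remaining entries are $O(\hat h^{3})$.

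Next I would invert the system via Lemma \ref{lemma2}, which gives $|A^{-1}_{k,j}|\le \tfrac{2}{3}\,2^{-|k-j|}$. For the $k$-th component,
\[
|\dot f_{i_0+k}-f'_{i_0+k}| \le |A^{-1}_{k,1}|\,|(r_{i_0^+})_{1}| + \sum_{j\ge 2}|A^{-1}_{k,j}|\,|(r_{i_0^+})_{j}|,
\]
and the second sum is $O(\hat h^{3})$ because $\sum_j |A^{-1}_{k,j}|\le 1$ (which is exactly what Lemma \ref{lemma1} encodes). The first term is dominated by $\tfrac{2}{3}\,2^{-(k-1)}\,(O(\hat h^{3})+T\hat h^{p_{i_0}})$. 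To force this contribution below $\hat h$, impose $2^{-(k-1)}\le \hat h$, i.e.\ $k\ge 1-\log_2 \hat h$, which translates to $i\ge i_0+1-\log_2\hat h$. The hypothesis $n>i_0+1-\log_2\hat h$ is precisely the condition guaranteeing that such indices exist in the subsystem.

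Finally, setting $\tilde l_1:=i_0+1-\log_2\hat h$ and $l_1:=\lceil\tilde l_1\rceil$ (mirroring $l_0=\tilde l_0+1$ in Proposition \ref{propo1}) I would conclude: for $l_1\le i\le n-1$ the contaminated term contributes at most $\hat h\cdot O(\hat h^{p_{i_0}})=O(\hat h^{p_{i_0}+1})$, which combined with the $O(\hat h^3)$ background gives $O(\hat h^{\min(3,p_{i_0}+1)})$; while for $i_0+1\le i<l_1$ the decay factor is not yet effective and only the trivial bound $O(\hat h^{p_{i_0}})$ survives. The main obstacle is purely bookkeeping: getting the mirror indexing right, in particular verifying that the added-and-subtracted $\lambda_{i_0}f'_{i_0}$ reconstitutes a genuine $R(i_0+1)$ term so that Lemma \ref{lema1sempre} applies verbatim. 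Once that identification is made, the rest is a direct transcription of Proposition \ref{propo1}.
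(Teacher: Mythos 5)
Your proposal is correct and is exactly the argument the paper intends: the paper omits the proof of this proposition, saying only that it is similar to Proposition~\ref{propo1} with Lemma~\ref{lemma2} as the key ingredient, and your mirrored residual decomposition $(r_{i_0^+})_{i_0+1}=R(i_0+1)+\lambda_{i_0}(f'_{i_0}-\tilde f_{i_0})$ followed by the geometric decay of $|A^{-1}_{k,1}|$ is precisely that transcription. The only point worth flagging is that bounding $R(i_0+1)$ via Lemma~\ref{lema1sempre} uses Taylor expansions reaching back to $x_{i_0}$ (through $m_{i_0}$ and $f'_{i_0}$), so strictly one needs $f\in C^4$ on $[x_{i_0},x_n]$ rather than $[x_{i_0+1},x_n]$ --- a defect of the proposition's stated hypotheses rather than of your proof.
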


The proof is similar to Prop. \ref{propo1}, the key is again to use  Lemma \ref{lemma2}.

The following corollary summarizes the order obtained in the approximation of the derivatives if the original system \eqref{splinesystem} is changed by \eqref{splinesystem21}. The order ot the approximation is reduced in a neighborhood of a point $x_{i_0}$ where monotonicity constraints do not hold when the original system is used, and, in exchange, $C^2$ regularity is maintained, except at the point $x_{i0}$ itself. the corollary is a direct consequence of Propositions. \ref{propo1} and \ref{propo2}.

\begin{corollary}\label{coro1}
Let us assume that $\hat h < 1$, $f(x)\in C^{4}([x_1,x_n])$ and let $L>0$ be such that $|f^{(4)}(x)|\leq L$, for all $x\in[x_{1},x_n]$. Let  $p_{i_0},T>0$ and $A_{i_0}$, $\dot F_{i_0}$, $B_{i_0}$ be as defined in Eqs. \eqref{eq1subp}, \eqref{eq2subp} and \eqref{eq3subp}
which satisfy  $|{f}'_{i_0}-\tilde{f}_{i_0}|=T\cdot \hat{h}^{p_{i_0}}$ and $A_{i_0}\dot F_{i_0}=B_{i_0}$. If $3-\log_2(\hat{h})<i_0<(n-1)+\log_2(\hat{h})$  and there exists $K>0$, such that $\hat{h}/h_i \leq K$ for all $i=1,\hdots,n$ then:

There exist integers $l_0<i_0<l_1$, such that:
\begin{equation}
|\dot f_i-f'_i|=\left\{
  \begin{array}{ll}
    O(\hat{h}^{\min(3,p_{i_0}+1)}), & 2\leq i \leq l_0; \\
    O(\hat{h}^{p_{i_0}}), &   l_0<  i < l_1;\\
    O(\hat{h}^{\min(3,p_{i_0}+1)}), & l_1\leq i \leq n-1.
  \end{array}
\right.
\end{equation}
Moreover the  cubic spline interpolator defined in Eq. \eqref{Hermite} using $F_{i_0}$ as an approximation of the values of the first derivatives, has $C^2$ regularity except at point $x_{i_0}$.
\end{corollary}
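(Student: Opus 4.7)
The approach is to exploit the block-diagonal structure of the modified linear system \eqref{splinesystem21} and invoke the two propositions already established.

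First, I would observe that $A_{i_0}$ is block diagonal, so the system $A_{i_0}\dot F_{i_0}=B_{i_0}$ decouples into two independent systems $A_{i_0^-}\dot F_{i_0^-}=B_{i_0^-}$ and $A_{i_0^+}\dot F_{i_0^+}=B_{i_0^+}$. The entries of $\dot F_{i_0^-}$ and $\dot F_{i_0^+}$ can therefore be analysed separately, each block being exactly of the form treated in the preceding propositions, with $\tilde f_{i_0}$ acting as a one-sided boundary condition.

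Next, I would apply Proposition \ref{propo1} to the left block. The hypothesis $3-\log_2(\hat h)<i_0$ in the corollary is precisely what that proposition requires; the quasi-uniformity estimate $\hat h/h_i\le K$ restricted to $i=1,\ldots,i_0-1$ is inherited from the global bound, and $f\in C^4([x_1,x_{i_0}])$ follows from $f\in C^4([x_1,x_n])$. This produces an integer $l_0<i_0-1$ yielding $|\dot f_i-f'_i|=O(\hat h^{\min(3,p_{i_0}+1)})$ for $2\le i\le l_0$ and $|\dot f_i-f'_i|=O(\hat h^{p_{i_0}})$ for $l_0<i\le i_0-1$. Symmetrically, I would apply Proposition \ref{propo2} to the right block: the inequality $i_0<(n-1)+\log_2(\hat h)$ is equivalent to $n>i_0+1-\log_2(\hat h)$, which is its hypothesis, producing $l_1>i_0+1$ with the analogous estimates on $[i_0+1,n-1]$. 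Concatenating the two ranges and recalling that $\dot f_{i_0}=\tilde f_{i_0}$ itself satisfies $|\dot f_{i_0}-f'_{i_0}|=T\hat h^{p_{i_0}}=O(\hat h^{p_{i_0}})$, which fits in the middle regime, gives the stated three-piece estimate.

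For the regularity claim, the Hermite construction automatically makes $P\in C^1$ at every interior knot, since the two adjacent cubics share both the value $f_i$ and the derivative $\dot f_i$ at $x_i$. Continuity of the second derivative at $x_i$ is equivalent to the $i$-th equation of the original tridiagonal system (see the computation leading to \eqref{eq:splind}). Inspecting \eqref{splinesystem21} together with \eqref{eq3subp}, for every $i\in\{2,\ldots,n-1\}\setminus\{i_0\}$ the corresponding equation still appears in the decoupled system; at $i=i_0\pm 1$ the term $\mu_{i_0-2}\tilde f_{i_0}$ or $\lambda_{i_0}\tilde f_{i_0}$ has merely been moved to the right-hand side, so the relation $P_{i-1}''(x_i)=P_i''(x_i)$ still holds. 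The only deleted equation is the one at index $i_0$, so generically $P_{i_0-1}''(x_{i_0})\neq P_{i_0}''(x_{i_0})$, and we conclude $P\in C^2([x_1,x_n]\setminus\{x_{i_0}\})$.

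The main thing to be careful about is the bookkeeping at the interface: verifying that the boundary adjustments absorbed into $b_{i_0-1}$ and $b_{i_0+1}$ encode exactly the continuity constraints at $x_{i_0-1}$ and $x_{i_0+1}$, and that the ``edge'' row of each block behaves as in the corresponding proposition (this is the content of the $i=i_0-1$ and $i=i_0+1$ cases in the proofs of Propositions \ref{propo1} and \ref{propo2}, where the $\tilde f_{i_0}$-perturbation contributes an $O(\hat h^{p_{i_0}})$ term to the residual). Once this is in place, the corollary is an immediate assembly of the two one-sided results.
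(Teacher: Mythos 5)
Your proposal is correct and follows essentially the same route as the paper, which presents the corollary as a direct consequence of Propositions \ref{propo1} and \ref{propo2} via the block-diagonal decoupling of $A_{i_0}\dot F_{i_0}=B_{i_0}$. In fact you supply more detail than the paper does, notably the verification that the hypothesis $3-\log_2(\hat h)<i_0<(n-1)+\log_2(\hat h)$ splits into the two one-sided conditions, that $i=i_0$ falls into the middle regime, and that the retained equations with $\tilde f_{i_0}$ absorbed into $b_{i_0\pm1}$ are exactly the $C^2$-continuity conditions at every knot other than $x_{i_0}$.
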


In the case of a piecewise $C^4$ function that has a smaller smoothness
 at an interval $[x_{i_0}, x_{i_0+1}]$, the following result holds \cite{kershaw3}:

\begin{proposition}\label{prop3}
 	Let us assume that $\hat{h}<1$, $f(x)\in C^{4}([x_1,x_{i_0}]\cup[x_{i_0+1},x_{n}])$ and let $L>0$ be such that $|f^{(4)}(x)|\leq L$, for all $x\in[x_1,x_{i_0}]\cup[x_{i_0+1},x_{n}]$.
	Let  $r>0$, $p_{i_0},p_{i_0+1}\geq 0$ and $A_{i_0^-}$, $\dot F_{i_0^-}$, $B_{i_0^-}$, $A_{(i_0+1)^+}$, $\dot F_{(i_0+1)^+}$, $B_{(i_0+1)^+}$
	be as defined in Eqs. \eqref{eq1subp}, \eqref{eq2subp} and \eqref{eq3subp} which satisfy  $|{f}'_{l}-\tilde{f}_{l}|=T\cdot \hat{h}^{p_{l}}, \,\, l=i_0,i_0+1$ and $A_{i_0}\dot F_{i_0}=B_{i_0}$. If $3-r\log_2(\hat{h})<i_0<(n-1)+r\log_2(\hat{h})$ and there exists $K>0$ such that $\hat{h}/h_i \leq K$ for all $i=1,\hdots,n$ then
	there exist integers $\tilde l_0<i_0,i_0+1<\tilde l_1$, such that:
	\begin{equation}
	|\dot f_i-f'_i|=\left\{
	\begin{array}{ll}
	O(\hat{h}^{\min(3,r+p_{i_0})}), & 2\leq i \leq \tilde l_0; \\
	O(\hat{h}^{\min(p_{i_0},p_{i_0+1})}, &   \tilde l_0<  i < \tilde l_1;\\
	O(\hat{h}^{\min(3,r+p_{i_0+1})}), & \tilde l_1\leq i \leq n-1,
	\end{array}
	\right.
	\end{equation}
	being:
	\begin{equation*}
	\tilde l_0=(i_0-1)+r\log_2(\hat{h}),\quad \tilde l_1=(i_0+2)-r\log_2(\hat{h}).
	\end{equation*}
\end{proposition}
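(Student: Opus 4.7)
The plan is to adapt the arguments of Propositions \ref{propo1} and \ref{propo2} to the case in which both $x_{i_0}$ and $x_{i_0+1}$ have their derivative approximations replaced simultaneously. Because $f \in C^{4}$ separately on $[x_1, x_{i_0}]$ and on $[x_{i_0+1}, x_n]$, the reduced tridiagonal system decouples into the left block $A_{i_0^-}\dot F_{i_0^-} = B_{i_0^-}$ and the right block $A_{(i_0+1)^+}\dot F_{(i_0+1)^+} = B_{(i_0+1)^+}$, which can be treated independently.

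First I would compute the left residual $r^- := A_{i_0^-}(\dot F_{i_0^-} - F'_{i_0^-})$. Applying Lemma \ref{lema1sempre} inside the smooth subinterval gives $(r^-)_i = R(i) = O(\hat{h}^3)$ for every interior row. The bottom row, indexed by $i_0 - 1$, picks up the extra contribution $\mu_{i_0-2}(f'_{i_0} - \tilde f_{i_0})$, bounded by $\mu_{i_0-2}\, T\, \hat{h}^{p_{i_0}}$, exactly as in \eqref{eqteo2}. The symmetric computation for the right block $r^+$ shows that $(r^+)_i = O(\hat{h}^3)$ everywhere except at the top row (indexed by $i_0+2$), where the analogous contribution $\lambda_{i_0+1}\, T\, \hat{h}^{p_{i_0+1}}$ appears.

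Next I would invert both blocks via the geometric decay $|A^{-1}_{ij}| \leq \tfrac{2}{3}\, 2^{-|i-j|}$ from Lemma \ref{lemma2}. For a generic row $i$ of the left block the error $\dot f_i - f'_i$ is obtained by applying $A_{i_0^-}^{-1}$ to $r^-$: the uniform $O(\hat{h}^3)$ part sums geometrically to $O(\hat{h}^3)$, while the single heavy entry at the bottom row contributes at most $\tfrac{2}{3}\, 2^{-d}\, T\, \hat{h}^{p_{i_0}}$, where $d$ is the distance from $i$ to that bottom row. The cutoff $\tilde l_0 = (i_0 - 1) + r\log_2(\hat{h})$ is defined precisely so that $2^{-d} \leq \hat{h}^{r}$ whenever $i \leq \tilde l_0$, yielding $O(\hat{h}^{\min(3,\, r + p_{i_0})})$ on the far left, while for $\tilde l_0 < i \leq i_0 - 1$ the $O(\hat{h}^{p_{i_0}})$ term dominates. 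The mirror argument on the right block produces $O(\hat{h}^{\min(3,\, r + p_{i_0+1})})$ for $i \geq \tilde l_1 = (i_0 + 2) - r\log_2(\hat{h})$ and $O(\hat{h}^{p_{i_0+1}})$ for $i_0 + 2 \leq i < \tilde l_1$.

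Finally, taking the larger of the two bounds in the overlap $\tilde l_0 < i < \tilde l_1$ gives the uniform estimate $O(\hat{h}^{\min(p_{i_0}, p_{i_0+1})})$, and the hypothesis $3 - r\log_2(\hat{h}) < i_0 < (n-1) + r\log_2(\hat{h})$ is exactly what is required to guarantee $2 \leq \tilde l_0$ and $\tilde l_1 \leq n-1$, so that the three index ranges in the conclusion are non-empty. The main obstacle I anticipate is the bookkeeping around the free parameter $r$: in Propositions \ref{propo1} and \ref{propo2} this parameter is implicitly fixed at $1$, so one must verify carefully that the aggregated geometric decay of $A^{-1}$ really does survive at the sharper rate $\hat{h}^r$, and that the resulting exponent $\min(3,\, r + p_{i_0})$ saturates at $3$ precisely when $r + p_{i_0} \geq 3$, where the smooth $O(\hat{h}^3)$ floor coming from the interior residuals takes over.
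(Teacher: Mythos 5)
Your argument is correct and is exactly the route the paper intends: the paper states this proposition without proof (citing Kershaw), but its surrounding machinery --- decoupling into the blocks $A_{i_0^-}$ and $A_{(i_0+1)^+}$, the residual estimate of Lemma \ref{lema1sempre} on each smooth piece with the single perturbed row carrying $\mu_{i_0-2}(f'_{i_0}-\tilde f_{i_0})$ (resp.\ $\lambda_{i_0+1}(f'_{i_0+1}-\tilde f_{i_0+1})$), and the geometric decay of $A^{-1}$ from Lemma \ref{lemma2} with the cutoff chosen so that $2^{-d}\leq \hat h^{r}$ --- is precisely the two-sided extension of Propositions \ref{propo1} and \ref{propo2} that you carry out, including the correct observation that $m_{i_0}$ (the only divided difference straddling the discontinuity) never enters either reduced system. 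Your closing remark about generalizing the implicit $r=1$ of the earlier propositions to arbitrary $r>0$ is the only genuinely new bookkeeping, and you handle it correctly.
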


\section{Non-linear computation of  derivatives}
\label{sec:derivatives2}

In Sections \ref{monotonemaxorder} and \ref{monotonemaxreg} we have shown two ways to construct monotonicity-preserving cubic splines by replacing some derivative values whenever necessary. In both cases, we have shown the importance of the order of accuracy of the approximate derivative values used as a replacement.
In this section we review  different ways to design these values and we study the respective order obtained by the interpolant.
%In this section we motivate our new formula to compute the derivatives values, $\{\dot{f}_{i}\}$, reviewing three approximations: Fritsch and Butland method (FB),
%Brodlie's formula (B) (see \cite{FrBut}) and Ar\`andiga's technique (A) (see \cite{arandiga13}). In first case, the interpolant obtained is second-order accurate in all cases. When B is used the interpolant is third-order when the partition of the interval is equally spaced and second-order in other cases. Finally, A formula gives rise to third-order interpolant except in extreme cases explained in \cite{arandiga13}.

Let us start by a formula designed by Fritsch and Butland \cite{FrBut}.
\begin{equation}
  \label{eq:frbut}
\dot{f}_{i}^{FB}:=\left \{
\begin{array}{ccc}
%    sign(m_{i-1})
\frac{3  m_{i-1}   m_{i} }
{      m_{i-1}  + 2  m_{i} }, & \mbox{  if  } &\hbox{ $| m_{i} | \leq | m_{i-1} | $;} \\
 %   sign(m_{i})
\frac{3  m_{i-1}  m_{i} }{ m_{i} + 2  m_{i-1} }, &\mbox{  if  } & \hbox{ $| m_{i} | > | m_{i-1} | $;} \\
    0, & \mbox{  if  } & \hbox {$m_{i-1} m_{i} \leq 0$.} \\
\end{array}%
\right.
\end{equation}
In this case the value $\dot{f}^{FB}_{i}$ is defined in a way that
automatically satisfies (\ref{eq:BoorSwa2}). If $f$ is smooth and  $m_{i-1}m_i>0$  then
   \begin{equation}\label{fb:order}
 \dot{f}^{FB}_i=
f'(x_i)+O(\hat{h}).
\end{equation}
By Lemma \ref{le:1}, the cubic Hermite  interpolant (\ref{Hermite}) is at least second-order accurate (see \cite{arandiga13}).

A second possibility is Brodlie's formula \cite{FrBut}:
\begin{equation}
  \label{eq:brodlie}
\dot{f}_{i}^{B}:=
%sign_{(i-1,i)}
\left \{
\begin{array}{cc}
  %\frac{wl_i+wr_i}{ \frac{wl_i}{ |m_{i-1}|}+ \frac{wr_i}{ |m_{i}|}  }
%= % sgn(m_{i-1},m_{i})
\dfrac{(wl_i +wr_i )m_{i-1}m_{i} }{ {wl_i}{ m_{i}}+ {wr_i  }{ m_{i-1}}  }, &  m_{i}
m_{i-1} \geq 0 ;
\\[0.4cm]
0 &  m_{i}
m_{i-1} < 0 ;
\end{array}%
\right.
\end{equation}
where $wl_i={h_{i-1}} + 2{h_{i}} $,
$wr_i=2{h_{i-1}} + {h_{i}}$. Formula   (\ref{eq:brodlie}) is implemented in the PCHIP program of Matlab
(\cite{Mol04}).

Using the properties of the weighted harmonic mean the following results are proved in  \cite{arandiga13}:
\begin{lemma}\label{lem:derb}
 Let us assume that $f$ is smooth and   $m_im_{i-1}>0$. Then,
If $h_{i-1}\neq h_i$ then
 \begin{equation}
\label{b:order}
\dot{f}^B_i= f'(x_i)+
\left\{ \begin{array}{lcl}
O(\hat{h}^2) & \mbox{if} &  h_{i-1}=h_i=\hat{h}, \\
O(\hat{h}) & \mbox{if} &  h_{i-1}\neq h_i.
\end{array}
\right.
\end{equation}
   \end{lemma}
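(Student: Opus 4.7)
The plan is to exploit the structure of Brodlie's formula as a weighted harmonic mean of $m_{i-1}$ and $m_i$. Since $m_{i-1} m_i > 0$, the first branch of \eqref{eq:brodlie} applies. I would start by establishing the algebraic identity
\begin{equation*}
\dot f_i^B - f'(x_i) \;=\; \frac{wl_i\, m_i\, (m_{i-1}-f'(x_i)) + wr_i\, m_{i-1}\,(m_i-f'(x_i))}{wl_i\, m_i + wr_i\, m_{i-1}},
\end{equation*}
which follows from a direct calculation after putting $\dot f_i^B - f'(x_i)$ over a common denominator and using $wl_i+wr_i = 3(h_{i-1}+h_i)$. This reformulation is the key because it exposes the two small quantities $m_{i-1}-f'(x_i)$ and $m_i-f'(x_i)$, to which Taylor's theorem applies cleanly.

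Next, I would insert the Taylor expansions centered at $x_i$,
\begin{equation*}
m_{i-1} = f'(x_i) - \tfrac{h_{i-1}}{2} f''(x_i) + O(\hat h^2), \qquad m_i = f'(x_i) + \tfrac{h_i}{2} f''(x_i) + O(\hat h^2),
\end{equation*}
into the identity above. In the outer factors $wl_i m_i$ and $wr_i m_{i-1}$ of the numerator I can replace $m_i, m_{i-1}$ by $f'(x_i)$ up to an $O(\hat h^3)$ error, because $wl_i, wr_i = O(\hat h)$ and $m_i - f'(x_i), m_{i-1} - f'(x_i) = O(\hat h)$. A short computation then shows that the leading part of the numerator equals $\tfrac{f'(x_i)\,f''(x_i)}{2}\bigl[(2h_{i-1}+h_i)h_i-(h_{i-1}+2h_i)h_{i-1}\bigr] = \tfrac{f'(x_i)\,f''(x_i)}{2}(h_i-h_{i-1})(h_i+h_{i-1})$. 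The denominator, by the same substitution, equals $3(h_{i-1}+h_i)f'(x_i) + O(\hat h^2)$, which is of order $\hat h$ and bounded away from zero (for smooth $f$ the standing hypothesis $m_{i-1}m_i>0$ forces $f'(x_i)\neq 0$, except in degenerate cases where further derivatives also vanish and which can be treated separately). Dividing, I arrive at
\begin{equation*}
\dot f_i^B - f'(x_i) = \frac{f''(x_i)}{6}(h_i-h_{i-1}) + O(\hat h^2).
\end{equation*}

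From this explicit expression the two cases of \eqref{b:order} are immediate: in the uniform case $h_{i-1}=h_i$ the first-order term vanishes and the error collapses to $O(\hat h^2)$, while if $h_{i-1}\neq h_i$ the error inherits the nonzero factor $(h_i-h_{i-1})$ and is only $O(\hat h)$. The main obstacle is the careful bookkeeping of the Taylor remainders---in particular verifying that the cross contributions coming from $m_i-f'(x_i)$ inside the $wl_i m_i$ factor (and symmetrically for $wr_i m_{i-1}$) are truly absorbed into the $O(\hat h^3)$ term of the numerator; this reduces to elementary order counting once the identity above is in place.
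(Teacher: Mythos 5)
Your proof is correct and follows exactly the route the paper gestures at: the paper does not prove Lemma~\ref{lem:derb} itself but cites the weighted-harmonic-mean argument of \cite{arandiga13}, which is precisely your identity for $\dot f^B_i - f'(x_i)$ followed by Taylor expansion of $m_{i-1}-f'(x_i)$ and $m_i-f'(x_i)$, giving the leading error $\tfrac{f''(x_i)}{6}(h_i-h_{i-1})$ and hence both cases of \eqref{b:order}. The one point to tighten is the division step: $m_{i-1}m_i>0$ does not by itself force $f'(x_i)\neq 0$ (take $f(x)=x^3$ at $x_i=0$), but in that degenerate case smoothness forces $f''(x_i)=0$ as well, so $m_{i-1},m_i=O(\hat h^2)$ and the harmonic-mean bound $|\dot f^B_i|\leq 3\min(|m_{i-1}|,|m_i|)$ settles the claim directly, as you indicate.
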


\begin{theorem}
  \label{teo:brod}
 Let us assume that  $m_im_{i-1}>0$.
 If the derivatives $\dot f_i$ are computed using  Brodlie's formula
  (\ref{eq:brodlie}),  then the cubic Hermite interpolant $P_i(x)$
 constructed from (\ref{Hermite}) is
  monotone. Moreover, if $f$ is smooth then
 $P_i(x)$ is  third-order accurate when $h_{i-1}=h_i$ $\forall i$
and second-order accurate otherwise.
 \end{theorem}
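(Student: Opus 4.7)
The plan is to split the statement into the two claims it makes and handle each by reducing to results already stated in the paper.

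For monotonicity, I would show that Brodlie's formula satisfies the hypothesis of Theorem \ref{BoorSwa}, namely
\[
|\dot f_i^B| \leq 3 \min(|m_{i-1}|, |m_i|).
\]
Since $\dot f_i^B$ is a weighted harmonic mean of $m_{i-1}$ and $m_i$ with positive weights $wl_i = h_{i-1}+2h_i$ and $wr_i = 2h_{i-1}+h_i$, the sign condition of Theorem \ref{FrCarl} is automatic when $m_{i-1}m_i>0$. To get the factor $3$, I would argue by cases. Assume without loss of generality $0 < m_i \leq m_{i-1}$ (the other case is symmetric, and sign flips are trivial). Then proving $\dot f_i^B \le 3 m_i$ reduces, after clearing the positive denominator $wl_i m_i + wr_i m_{i-1}$, to
\[
(wl_i+wr_i)m_{i-1}m_i \leq 3m_i\bigl(wl_i m_i + wr_i m_{i-1}\bigr),
\]
which simplifies to $3 wl_i m_i^2 + (3wr_i - wl_i - wr_i)m_{i-1}m_i \geq 0$; substituting $wl_i+wr_i = 3(h_{i-1}+h_i)$ and $wr_i = 2h_{i-1}+h_i$ leaves a manifestly nonnegative combination. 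Once the bound is established for the relevant minimum in each case, Theorem \ref{BoorSwa} directly yields monotonicity of $P_i$ on each $[x_i,x_{i+1}]$.

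For the accuracy claims, no new calculation is needed: Lemma \ref{lem:derb} already provides the pointwise error of Brodlie's derivative approximation, namely $q_i = 2$ in the uniform case $h_{i-1}=h_i$ for all $i$ and $q_i = 1$ in general. Inserting these exponents into Lemma \ref{le:1}, which gives the order of the Hermite interpolant as $q=\min(4, q_i+1, q_{i+1}+1)$, yields $q=3$ in the uniform case and $q=2$ otherwise, exactly the rates claimed.

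The main obstacle is the monotonicity inequality. The harmonic-mean structure ensures that the value sits between $m_{i-1}$ and $m_i$ in some averaged sense, but the factor $3$ in \eqref{eq:BoorSwa2} is not automatic from generic mean inequalities: it relies on the specific choice of weights $wl_i,wr_i$ (linear in $h_{i-1},h_i$ with asymmetric coefficients $1$ and $2$). The cleanest way through is the direct algebraic verification above, checking each case $|m_i|\lessgtr |m_{i-1}|$ separately and using $wl_i + wr_i = 3(h_{i-1}+h_i)$ to obtain the factor $3$ without slack. Once this is in hand, both conclusions follow immediately from the earlier results.
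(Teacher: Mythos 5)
Your proposal is correct and follows exactly the route the paper intends: the paper states Theorem \ref{teo:brod} without proof, deferring to \cite{arandiga13} and \cite{FrBut}, where the argument is precisely your two-step reduction --- verify $|\dot f_i^B|\leq 3\min(|m_{i-1}|,|m_i|)$ by the direct algebraic computation exploiting $wl_i+wr_i=3(h_{i-1}+h_i)$ so that Theorem \ref{BoorSwa} applies, then combine Lemma \ref{lem:derb} with Lemma \ref{le:1} for the accuracy rates. Your key inequality reduces to $3\,wl_i\,m_i^2+(2wr_i-wl_i)\,m_{i-1}m_i\geq 0$ with $2wr_i-wl_i=3h_{i-1}>0$, which is indeed manifestly nonnegative, so the argument is complete.
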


Brodlie's formula produces a third order interpolant only in the case of using equally-spaced grids.
 In \cite{arandigayanez19} (see also \cite{arandiga13}) Ar\`andiga and Y\'a\~nez introduce a new  method to compute the approximated derivatives based on the weighted harmonic mean that achieve third order of accuracy for non-uniform grids and preserves monotonicity. The proposed formula is:
 \begin{equation}
  \label{eq:ay}
\dot{f}_{i}^{AY}:=\left\{
\begin{array}{ccc}
sign(m_i)\dfrac{(h_{i}+h_{i-1})^{1/p_i}\,|m_{i-1}| \, |m_{i}| }
{\left(h_{i-1}\, |m_{i-1}|^{p_i}+ h_i \, |m_{i}|^{p_i}\right)^{1/p_i}  }
&\mbox{  if  } &  m_{i}
m_{i-1} \geq 0 ;\\[0.4cm]
0 &\mbox{  if  } &  m_{i}
m_{i-1} < 0,
\end{array}
\right.
\end{equation}
where $p_i=\max(1,\frac{\log(w_i)}{\log(3)})$ and $w_i=2\max(h_{i-1},h_i)/\min(h_{i-1},h_i)$. With this formula,  the following proposition holds:
\begin{proposition}(\cite{arandigayanez19})
  \label{teo:arandigayanez}
 Let us assume that  $m_im_{i-1}>0$ and there exists $K>0$ such that $\hat{h}/h_j <K$, for all $j=1,\hdots,n$.
 If  we obtain the derivatives using  Ar\`andiga-Y\'a\~nez's formula,
  (\ref{eq:ay}),  then the cubic Hermite interpolant (\ref{Hermite}) is
  monotone. Moreover, if $f$ is smooth then
 $P_i(x)$ is  third-order accurate.
 \end{proposition}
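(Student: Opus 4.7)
The proof splits naturally into verifying monotonicity via Theorem~\ref{BoorSwa} and order of approximation via Lemma~\ref{le:1}. For the monotonicity part, when $m_{i-1}m_i<0$ or either slope vanishes, the formula returns $\dot f_i^{AY}=0$, which is compatible with Theorem~\ref{FrCarl} and inequality~(\ref{eq:BoorSwa2}). In the generic case $m_{i-1}m_i>0$ I would prove the stronger bound $|\dot f_i^{AY}|\leq 3\min(|m_{i-1}|,|m_i|)$. Assuming by symmetry that $|m_{i-1}|\leq|m_i|$, the claim reduces to
\[
(h_{i-1}+h_i)^{1/p_i}|m_i|\leq 3\bigl(h_{i-1}|m_{i-1}|^{p_i}+h_i|m_i|^{p_i}\bigr)^{1/p_i}.
\]
Dropping the non-negative term $h_{i-1}|m_{i-1}|^{p_i}$ from the right-hand side only weakens the inequality, so a sufficient condition is $1+h_{i-1}/h_i\leq 3^{p_i}$. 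When $h_{i-1}\leq h_i$, the bound $p_i\geq 1$ gives $3^{p_i}\geq 3\geq 1+h_{i-1}/h_i$. When $h_{i-1}>h_i$, the choice $p_i\geq\log(2h_{i-1}/h_i)/\log 3$ yields $3^{p_i}\geq 2h_{i-1}/h_i>1+h_{i-1}/h_i$. The opposite case $|m_i|\leq |m_{i-1}|$ is entirely symmetric, and Theorem~\ref{BoorSwa} then delivers monotonicity on every interval.

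For the order of approximation it suffices to show $\dot f_i^{AY}=f'(x_i)+O(\hat h^2)$, because Lemma~\ref{le:1} with $q_i=q_{i+1}=2$ then yields the desired third-order bound. Writing $g:=f'(x_i)$, Taylor expansion gives $m_{i-1}=g-\tfrac12 h_{i-1}f''(x_i)+O(\hat h^2)$ and $m_i=g+\tfrac12 h_if''(x_i)+O(\hat h^2)$; assume $g>0$ (the opposite sign is identical after relabelling). A first-order expansion of each $|m|^{p_i}$ around $g$ produces
\[
h_{i-1}|m_{i-1}|^{p_i}+h_i|m_i|^{p_i}=g^{p_i}(h_{i-1}+h_i)\left[1+p_i\,\frac{h_i-h_{i-1}}{2g}f''(x_i)+O(\hat h^2)\right].
\]
Taking the $1/p_i$-th power uses the expansion $(1+p_iX+O(\hat h^2))^{1/p_i}=1+X+O(\hat h^2)$, which absorbs the factor $p_i$ to give
\[
\bigl(h_{i-1}|m_{i-1}|^{p_i}+h_i|m_i|^{p_i}\bigr)^{1/p_i}=g(h_{i-1}+h_i)^{1/p_i}\left[1+\frac{h_i-h_{i-1}}{2g}f''(x_i)+O(\hat h^2)\right].
\]
Independently, $|m_{i-1}||m_i|=g^2\bigl[1+(h_i-h_{i-1})/(2g)\,f''(x_i)+O(\hat h^2)\bigr]$. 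The linear perturbations match exactly, so they cancel in the quotient defining $\dot f_i^{AY}$, yielding $\dot f_i^{AY}=g+O(\hat h^2)$ as required.

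The main obstacle is the uniformity of the $O(\hat h^2)$ constants with respect to $p_i$: since $p_i$ depends on the local mesh ratio and appears both as a factor and as an exponent, one must ensure that all Taylor remainders are bounded independently of $i$. The hypothesis $\hat h/h_j<K$ is used precisely here, because it supplies $p_i\leq \max(1,\log(2K)/\log 3)$, fixing a uniform upper bound on $p_i$ that tames the $(1+p_iX)^{1/p_i}$ expansion across all admissible values and produces constants uniform in $i$. Verifying this uniformity, together with the exact cancellation of the first-order terms, is the technical heart of the argument.
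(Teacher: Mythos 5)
The paper does not actually prove Proposition \ref{teo:arandigayanez}; it is imported from \cite{arandigayanez19} and stated without argument, so there is no in-paper proof to compare against. Your proof is correct and is, in substance, the argument one finds in that reference. For monotonicity, the reduction of $|\dot f_i^{AY}|\leq 3\min(|m_{i-1}|,|m_i|)$ to $1+h_{i-1}/h_i\leq 3^{p_i}$ (after discarding one term of the weighted power mean) is exactly the role the exponent $p_i=\max\bigl(1,\log(w_i)/\log 3\bigr)$ is designed to play, and both symmetric cases check out; Theorem \ref{BoorSwa} then applies, with the sign condition of Theorem \ref{FrCarl} supplied by the factor $sign(m_i)$ in \eqref{eq:ay}. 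For accuracy, the cancellation of the first-order perturbations $\tfrac{h_i-h_{i-1}}{2g}f''(x_i)$ between the numerator $|m_{i-1}||m_i|$ and the $1/p_i$-th root of the denominator is the correct mechanism yielding $\dot f_i^{AY}=f'(x_i)+O(\hat{h}^2)$, and Lemma \ref{le:1} with $q_i=q_{i+1}=2$ gives order three. Your identification of where the hypothesis $\hat{h}/h_j<K$ enters --- it caps $p_i$ by $\max(1,\log(2K)/\log 3)$ and hence makes the constants in the expansion $(1+p_iX+O(\hat{h}^2))^{1/p_i}=1+X+O(p_i\hat{h}^2)$ uniform in $i$ --- is precisely right.

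One caveat worth recording: your Taylor expansions divide by $g=f'(x_i)$, so the $O(\hat{h}^2)$ constants degrade as $f'(x_i)\to 0$; the third-order claim as written is therefore local to points where $f'$ is bounded away from zero. The degenerate case $f'(x_i)=0$ with $m_{i-1}m_i>0$ forces $f''(x_i)=0$ for small $\hat{h}$ and is handled by the crude bound $|\dot f_i^{AY}|\leq 3\min(|m_{i-1}|,|m_i|)=O(\hat{h}^2)$, so the conclusion survives, but this needs to be said rather than absorbed into ``assume $g>0$.'' This does not affect the validity of your argument in the regime where the proposition is invoked.
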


Note that in the case of equally-spaced grids the formulas by Ar\`andiga and Y\'a\~nez  and Brodlie coincide.

\section{Numerical experiments}\label{sec:num}

In this section we  present some experiments to verify the theoretical results previously obtained. In particular, we will divide our experiments in two subsections: in \ref{sec:accuracy} we  study the order of approximation of the different reconstructions using smooth or piecewise smooth functions. We perform two experiments with both equally- and not equally-spaced grids. In the first case methods $B$ and $AY$ are the same.

 On the other hand, in \ref{sec:monotonicity} we check the monotonicity property in cases where the function is unknown and only nodal values are given.

In this section each method will be identified by an acronym, being:

\begin{description}
\item[$\bS$:] Cubic spline with boundary conditions $S'(a)=f'(a)$, $S'(b)=f'(b)$. If we do not know these boundary conditions we impose $S'(a)=m_1$, $S'(b)=m_{n-1}$.
\item[$\bO_k$:] Method explained in Section \ref{monotonemaxorder}. The approximations of the first derivative are computed using system \eqref{primersistema}--\eqref{splinesystem}  and those values which do not satisfy the conditions by Theorem \ref{le:BoorSwa2} are replaced by new approximations obtained through the methods explained in Section \ref{sec:derivatives2}.

\item[$\bR_k$:] Method explained in Section \ref{monotonemaxreg}. It is a cubic spline  with boundary conditions $S'(a)=f'(a)$, $S'(b)=f'(b)$ but constructed using system \eqref{primersistema}--\eqref{splinesystem}
	instead of \eqref{splinesystem21}  because an approximate derivative value is modified.
\end{description}
For methods {$\bO$} and {$\bR$} we introduce the subscript $k$ to indicate the approximation to the derivative used, thus $k=FB,\, B,$ or $AY$, Eqs. \eqref{eq:frbut},  \eqref{eq:brodlie} and \eqref{eq:ay} respectively.

\subsection{Accuracy}\label{sec:accuracy}
We divide this section in two parts: Firstly, we analyze the case of equally-spaced grids. We will check that the order of accuracy of the approximation to the derivatives' values is four at smooth parts. Secondly, we perform some experiments using a non-uniform grid to discretize the functions. In both cases, we explore the order of approximation at the points depending on the distance to the discontinuity.

\subsubsection{Experiments with uniform grids}\label{expsuniform}
In this section we consider two experiments: in the first one the function is smooth and we replace the approximation of the derivative at a single point to check the effect of this new value in the smoothness and accuracy of the spline; in the second one we consider a piecewise smooth function with a jump discontinuity.

\emph{Experiment 1.} In order to check the order of approximation of the methods we consider the following smooth function:
\begin{equation}\label{eq:exp1}
f(x)=x^4+\sin(x),
\end{equation}
and discretize it on $[0,2]$ using a uniform grid: $x_j^l=j/2^l$, $j=0,\hdots, 2^{l+1}$, being $l$ a fixed positive integer. We establish a window
$W\subseteq\{0, \dots, 2^{l+1}\},$ that selects a subset of the points in the discretization. Errors and numerical orders of the various methods are computed in the selected points, in order to verify the properties stated Sections  \ref{monotonemaxorder}, \ref{monotonemaxreg}  and  \ref{sec:derivatives2}. The errors are computed  in the window using:
$$e^W_l=\max_{i\in W}|f'(x_i)-\dot f_i|$$
and the order of accuracy of the approximation are estimated by computing
$$o^W=\log_2\left(\frac{e^W_l}{e^W_{l-1}}\right).$$

 For methods $\bO$ and $\bR$ we replace the derivative value  $\dot f_{2^l}$ corresponding to the point $i_0=2^l$ by new values computed by the methods in Section \ref{sec:derivatives2}, so as to verify the accuracy and smoothness properties stated in
	Sections \ref{monotonemaxorder} and \ref{monotonemaxreg}.

We first consider the window $W_1=\{i\,:\,0\leq i\leq 2^{l+1}\}$With this setup the order of accuracy is determined by the  approximation of the first derivative made in the point $i_0$. As shown in Table \ref{tab:a1}, in the case of $B$ and $AY$, we obtain second order in accordance with Eq. \eqref{b:order}; for $FB$ method, it is reduced by Eq. \eqref{fb:order}. Finally we remark that, according to \eqref{eq:derivordre4} the order is four for the $\bS$ algorithm as the grid is uniform.
\begin{table}[H]
  \begin{center}
   \begin{tabular}{|c|c|c|c|c|c|} \hline
   $h$   & $\bS$ &  $\bR_{FB}$  &  $\bO_{FB}$ & $\bR_{B}=\bR_{AY}$  &  $\bO_{B}=\bO_{AY}$   \\
   \hline
   $3.125e{-}2$  &$3.9988$  &    $ 0.9390$& $ 0.9390$&$ 1.9952$ &$ 1.9952$\\
   $1.562e{-}2$  &$3.9997$  &    $ 0.9715$& $ 0.9715$&$ 1.9988$ &$ 1.9988$\\
   $7.812e{-}3$  &$3.9999$  &    $ 0.9863$& $ 0.9863$&$ 1.9997$ &$ 1.9997$\\
   $3.906e{-}3$  &$3.9999$  &    $ 0.9932$& $ 0.9932$&$ 1.9999$ &$ 1.9999$\\
   \hline
   \end{tabular}
  \end{center}
   \caption{Experiment 1 with $h=2^{-l}$ (equally-spaced grid) and estimated orders
     $\log_2(e^{W_1}_l/e^{W_1}_{l-1})$,    $5\leq l \leq 8$,
$W_1=\{i\,:\,2\leq i\leq 2^{l+1}-1\}$.}
      \label{tab:a1}
\end{table}
If the window considered for order estimation is reduced so as to exclude $i_0$, that is, $W_2=W_1\setminus\{i_0\}$, the order for the $\bO$ methods is increased up to four, in agreement with Prop. \ref{propo:maximoorden}. In contrast, for the $\bR$ methods the order does not increase as the order reduction affects points close to $i_0,$ according to Cor. \ref{coro1} (see Table \ref{tab:a2}).
	
\begin{table}[H]
  \begin{center}
   \begin{tabular}{|c|c|c|c|c|c|} \hline
   $h$   & $\bS$ &  $\bR_{FB}$  &  $\bO_{FB}$ & $\bR_{B}=\bR_{AY}$  &  $\bO_{B}=\bO_{AY}$   \\
   \hline
   $3.125e{-}2$  &$3.9988$  &    $ 0.9390$& $3.9988$&$ 1.9952$ &$3.9988$\\
   $1.562e{-}2$  &$3.9997$  &    $ 0.9715$& $3.9997$&$ 1.9988$ &$3.9997$\\
   $7.812e{-}3$  &$3.9999$  &    $ 0.9863$& $3.9999$&$ 1.9997$ &$3.9999$\\
   $3.906e{-}3$  &$3.9999$  &    $ 0.9932$& $3.9999$&$ 1.9999$ &$3.9999$\\
   \hline
   \end{tabular}
  \end{center}
   \caption{Experiment 1 with $h=2^{-l}$  (equally-spaced grid)  and estimated orders
     $\log_2(e^{W_2}_l/e^{W_2}_{l-1})$,    $5\leq l \leq 8$,
$W_2=W_1\setminus\{i_0\}$.}
      \label{tab:a2}
\end{table}
Finally, if we consider the setup corresponding to Props. \ref{propo1} and \ref{propo2} by taking
\begin{equation*}
\begin{split}
l_0&=(i_0-1)+\log_2(\hat{h})=(2^l-1)+\log_2(2^{-l})=2^l-l-1,\\
l_1&=(i_0+1)-\log_2(\hat{h})=(2^l+1)-\log_2(2^{-l})=2^l+l+1,\\
\end{split}
\end{equation*}
and define $W_3=\{i\,:\,1\leq i\leq l_0 \,\,\text{or} \,\, l_1\leq i\leq 2^{l+1}-1\}$. As we can see in Table \ref{tab:a3}, the order increases in $\bR$ methods from two up to four in $B=AY$ methods and from one to three for $\bR_{FB}$. In this case, the size of the chosen window around the discontinuity  is sufficiently large as to increase the order of accuracy at the rest of points from the order obtained for all the points showed in Table \ref{tab:a2}. It would be possible to reduce this interval if we take the bound indicated in Eq. \eqref{cotanoespaciada} for equally-spaced grids. In this way, the constructed spline has maximum order and regularity at the interval $[x_1,x_{l_0}]\cup [x_{l_1},x_{n}]$ for all methods, according to Lemma \ref{le:1}.
 \begin{table}[H]
  \begin{center}
   \begin{tabular}{|c|c|c|c|c|c|} \hline
   $h$   & $\bS$ &  $\bR_{FB}$  &  $\bO_{FB}$ & $\bR_{B}=\bR_{AY}$  &  $\bO_{B}=\bO_{AY}$   \\
   \hline
   $3.125e{-}2$  &$3.9988$  &    $ 2.8397$& $3.9988$&$  3.8882$ &$3.9988$\\
   $1.562e{-}2$  &$3.9997$  &    $ 2.8713$& $3.9997$&$  3.9067$ &$3.9997$\\
   $7.812e{-}3$  &$3.9999$  &    $ 2.8864$& $3.9999$&$  3.8930$ &$3.9999$\\
   $3.906e{-}3$  &$3.9999$  &    $ 2.8932$& $3.9999$&$  3.9063$ &$3.9999$\\
   \hline
   \end{tabular}
  \end{center}
   \caption{Experiment 1 with $h=2^{-l}$ (equally-spaced grid) and estimated orders
     $\log_2(e^{W_3}_l/e^{W_3}_{l-1})$,    $5\leq l \leq 8$,
$W_3=\{i\,:\,1\leq i\leq l_0 \,\,\text{or} \,\, l_1\leq i\leq 2^{l+1}-1\}$.}
      \label{tab:a3}
\end{table}

\emph{Experiment 2.}   In this experiment we consider a piecewise smooth function with a jump discontinuity located in the interval $[x_{i_0}, x_{i_0+1}]$. The presence of the discontinuity produces two effects: first, the approximation of the derivatives obtained from \eqref{primersistema} will suffer the Gibbs phenomenon and produce some spurious oscillations near the discontinuity and hence monotonicity will not be preserved; second, the  methods discussed in Section \ref{sec:derivatives2} will not attain their maximum accuracy.

We consider the  function:
\begin{equation}\label{eq:exp2}
g(x)=\left\{
       \begin{array}{ll}
         x^4+\sin(x), & 0\leq x\leq 1, \\
         4+x^4+\cos(x), & 1 < x \le 2,
       \end{array}
     \right.
\end{equation}
and discretize it in $[0,2]$  analogously to previous subsection.This function has a jump discontinuity at $x=1$ that corresponds to the node $x_{2^l}.$

	 In Table \ref{tab:a4} we display the numerical orders obtained in the derivative approximation when we take $\tilde W_3=W_3\setminus\{2^l+l+1\}$ with $W_3$ being as defined in the previous experiment,
	 	 and in the left column of Fig. \ref{fig:exp2} the reconstructions produced by the different methods are shown. The spline $\bS$ produces oscillations due to the violation of monotonicity constraints and produces a poor order of accuracy in the reconstruction. 	 	
	 	
	 The same results are obtained for the $\bO$ methods because the aproximation of the first derivative is no modified in the nodes belonging to  $\tilde{W}_3$.
	 Finally there is an improvement in the order of accuracy if the $\bR$ methods are applied.
	 In the right plots of
	 	Fig. \ref{fig:exp2} the errors obtained in the derivative computation are shown. It can be seen that all methods produce big errors around the discontinuity, being the ones corresponding to the spline reconstruction one order of magnitude bigger than the rest. On the other hand, methods $B$ and $AY$ produce errors
	 	that are smaller by a factor of around $1/2$ than the ones produced by $FB$. Also, we observe that, as expected, the $\bO$ methods suffer an accuracy loss in a bigger neighbourhood around $[x_{i_0}, x_{i_0+1}]$ than the $\bR$ methods.

 \begin{table}[H]
  \begin{center}
   \begin{tabular}{|c|c|c|c|c|c|} \hline
   $h$   & $\bS$ &  $\bR_{FB}$  &  $\bO_{FB}$ & $\bR_{B}=\bR_{AY}$  &  $\bO_{B}=\bO_{AY}$   \\
   \hline
   $3.125e{-}2$  &$ 0.8964$  &     $1.8706$& $ 0.8964$&$ 2.0121$ &$ 0.8964$\\
   $1.562e{-}2$  &$ 0.8982$  &     $1.7824$& $ 0.8982$&$ 1.7898$ &$ 0.8982$\\
   $7.812e{-}3$  &$ 0.8991$  &     $1.8395$& $ 0.8991$&$ 1.8448$ &$ 0.8991$\\
   $3.906e{-}3$  &$ 0.8995$  &     $1.8693$& $ 0.8995$&$ 1.8723$ &$ 0.8995$\\
   \hline
   \end{tabular}
  \end{center}
   \caption{Experiment 2 with $h=2^{-l}$ and estimated orders
     $\log_2(e^{\tilde W_3}_l/e^{\tilde W_3}_{l-1})$,    $5\leq l \leq 8$,
$\tilde W_3=\{i\,:\,1\leq i\leq l_0 \,\,\text{or} \,\, l_1\leq i\leq 2^{l+1}-1\}$, when the grid is uniform.}
      \label{tab:a4}
\end{table}

\begin{figure}[!bpt]
\begin{center}
\begin{tabular}{ccc}
&(a) & (b)
 \\
\begin{rotate}{90}{$\qquad \qquad \quad \,\,\,\,\,\,\,\,\bS$}\end{rotate}
 &\includegraphics[width=6.0cm,height=4cm]{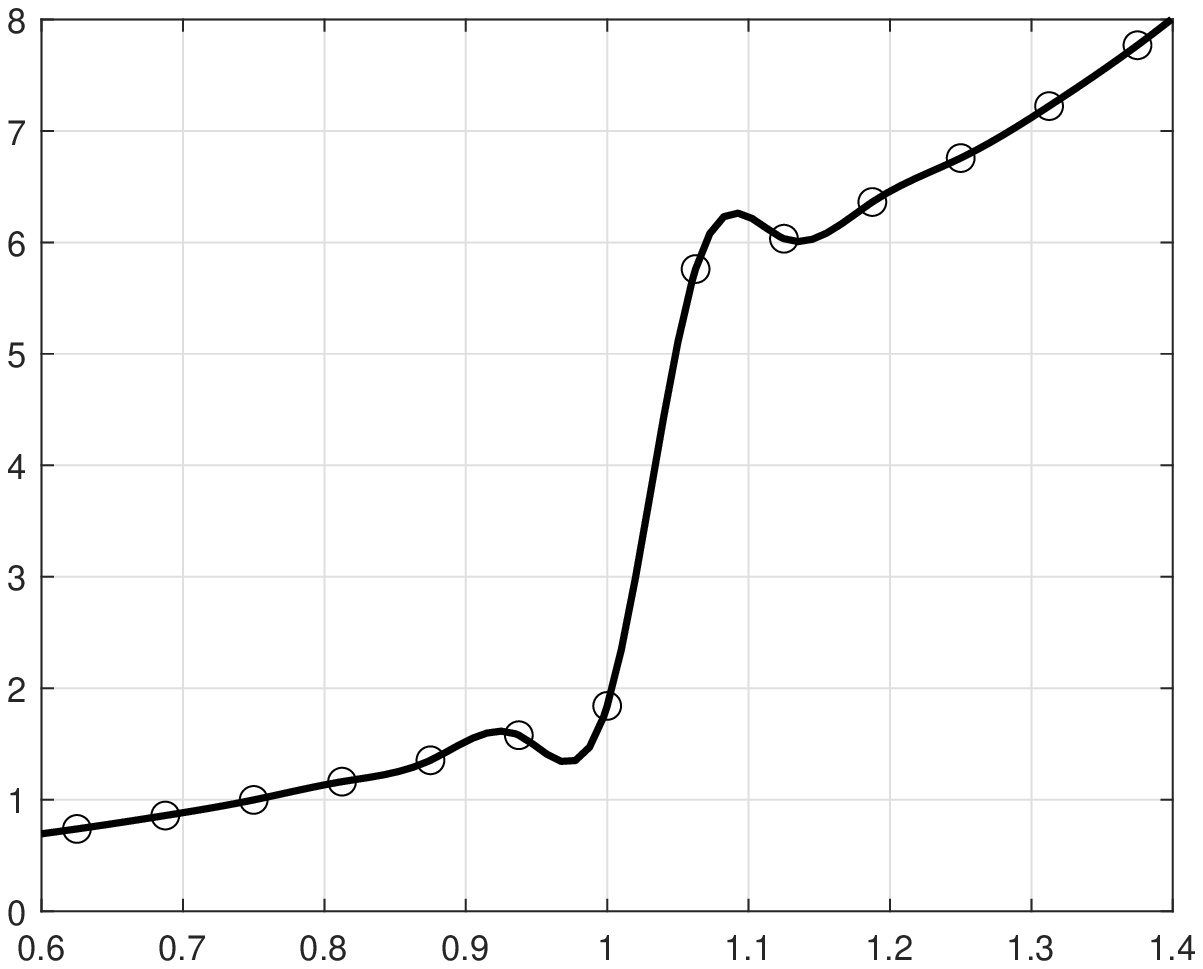} & \includegraphics[width=6.0cm,height=4cm]{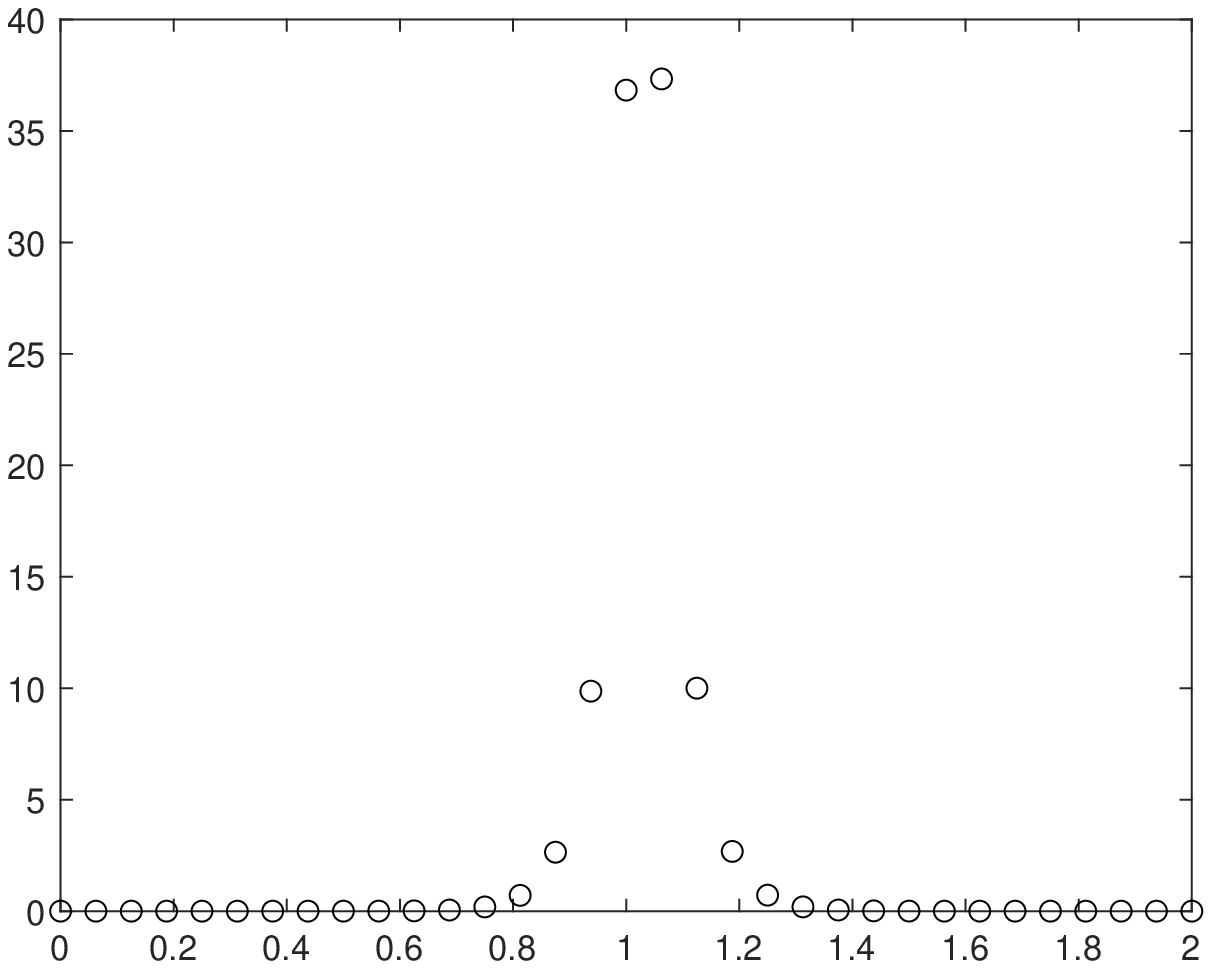}
 \\
\begin{rotate}{90}{$\qquad \qquad \quad \,\,\,\,\,\,\,\,\bR_{FB}$}\end{rotate} & \includegraphics[width=6.0cm,height=4cm]{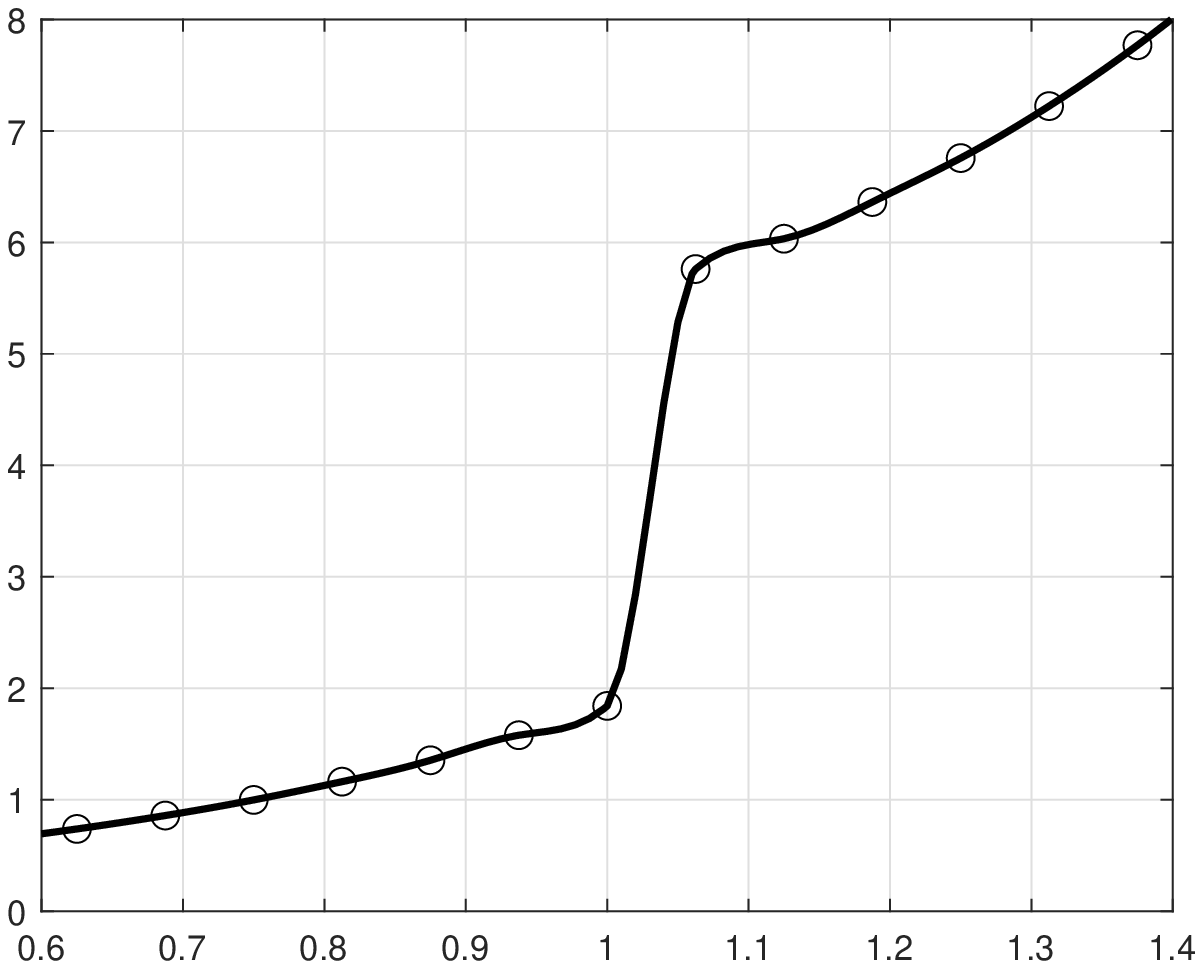}& \includegraphics[width=6.0cm,height=4cm]{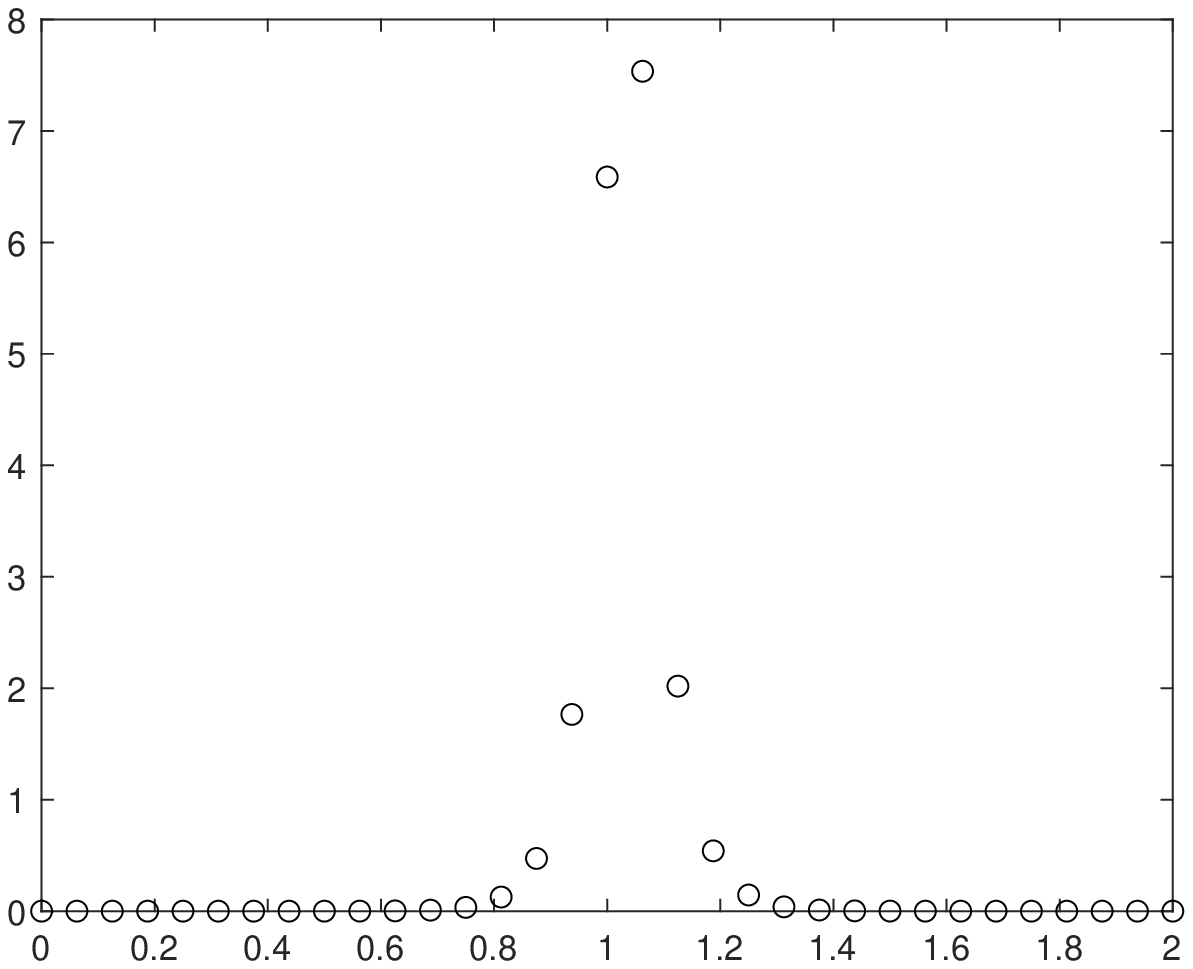}\\
\begin{rotate}{90}{$\qquad \qquad \quad \,\,\,\,\,\,\,\,\bO_{FB}$}\end{rotate} & \includegraphics[width=6.0cm,height=4cm]{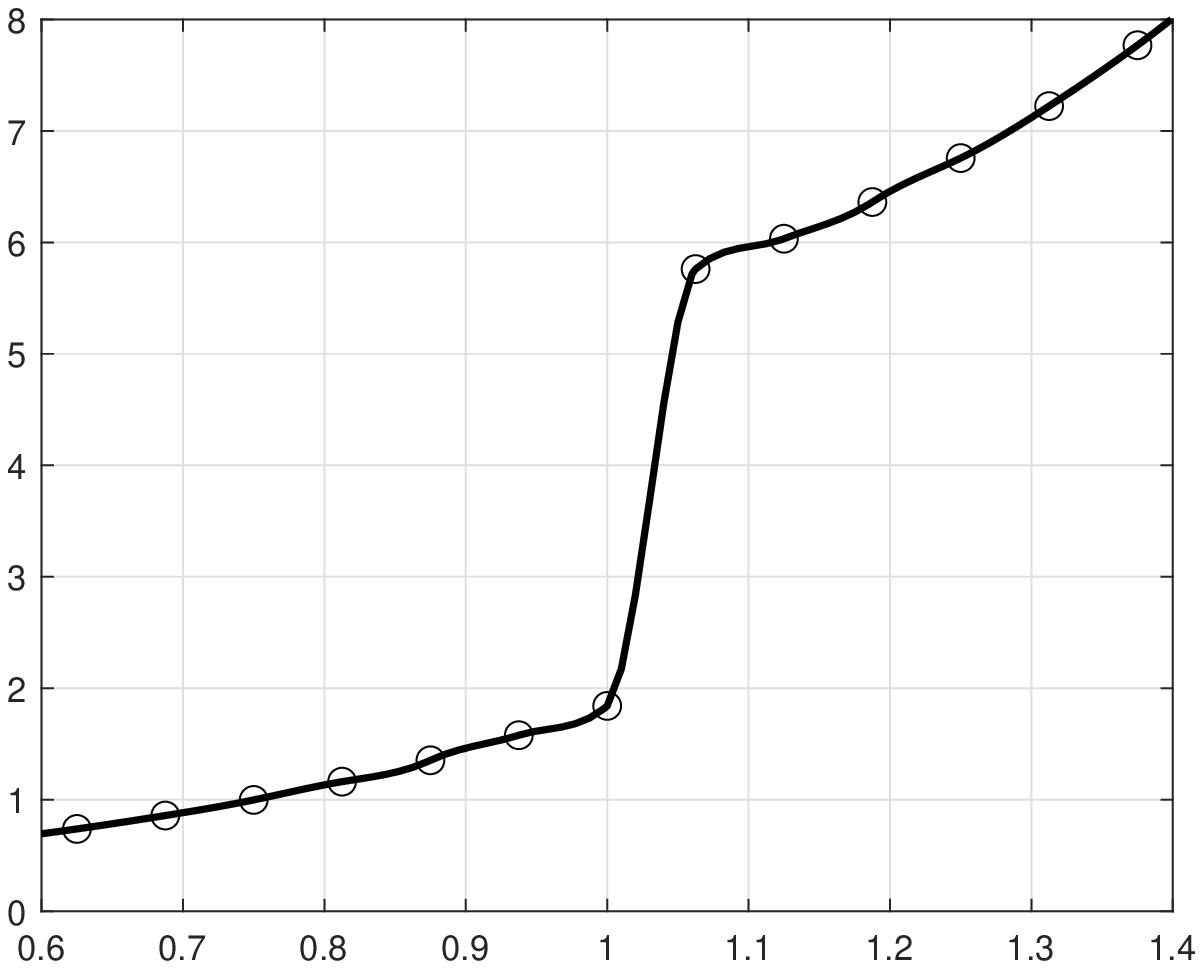} & \includegraphics[width=6.0cm,height=4cm]{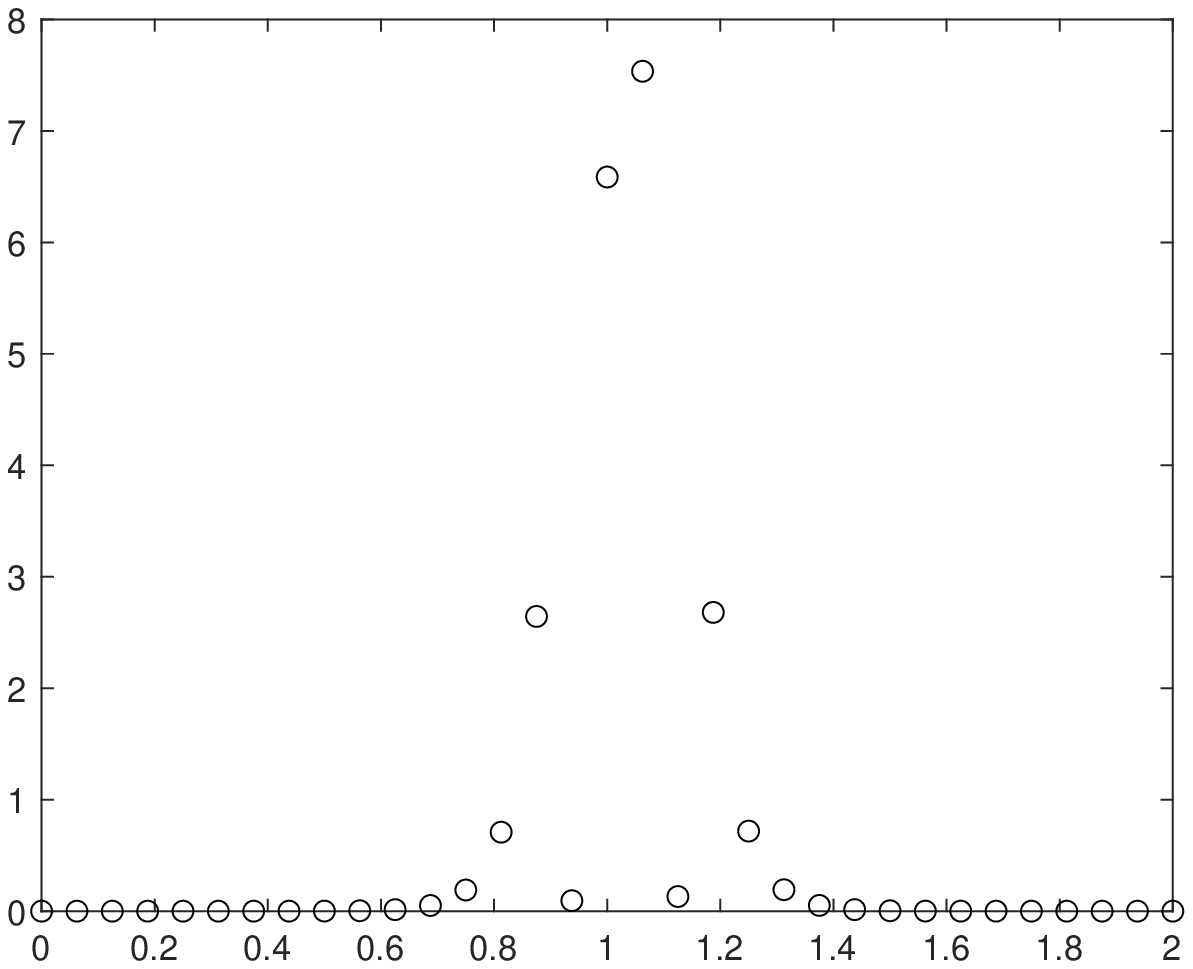}\\
\begin{rotate}{90}{$\quad \,\,\,\,\,\,\,\,\bR_{B}=\bR_{AY}$}\end{rotate}  & \includegraphics[width=6.0cm,height=4cm]{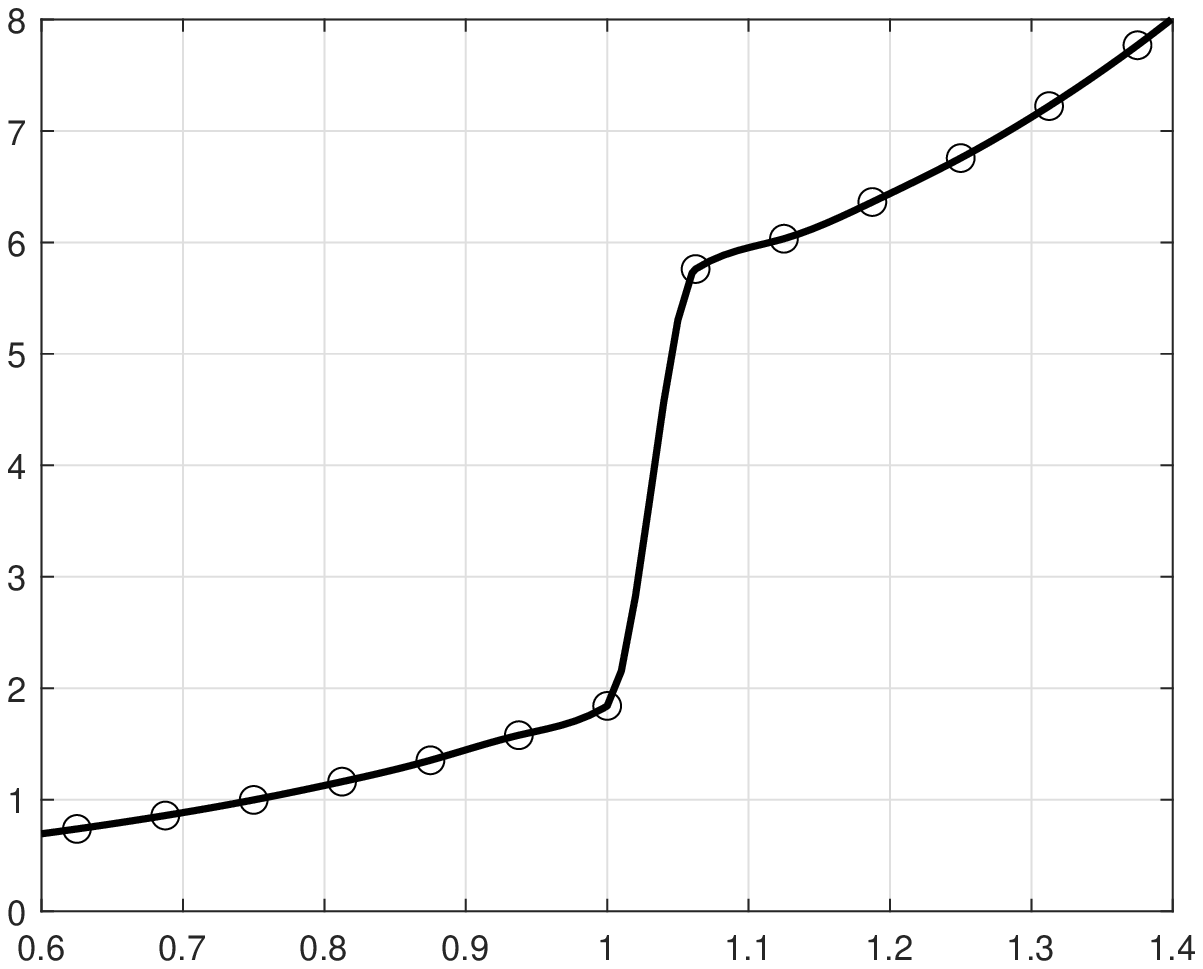}
&  \includegraphics[width=6.0cm,height=4cm]{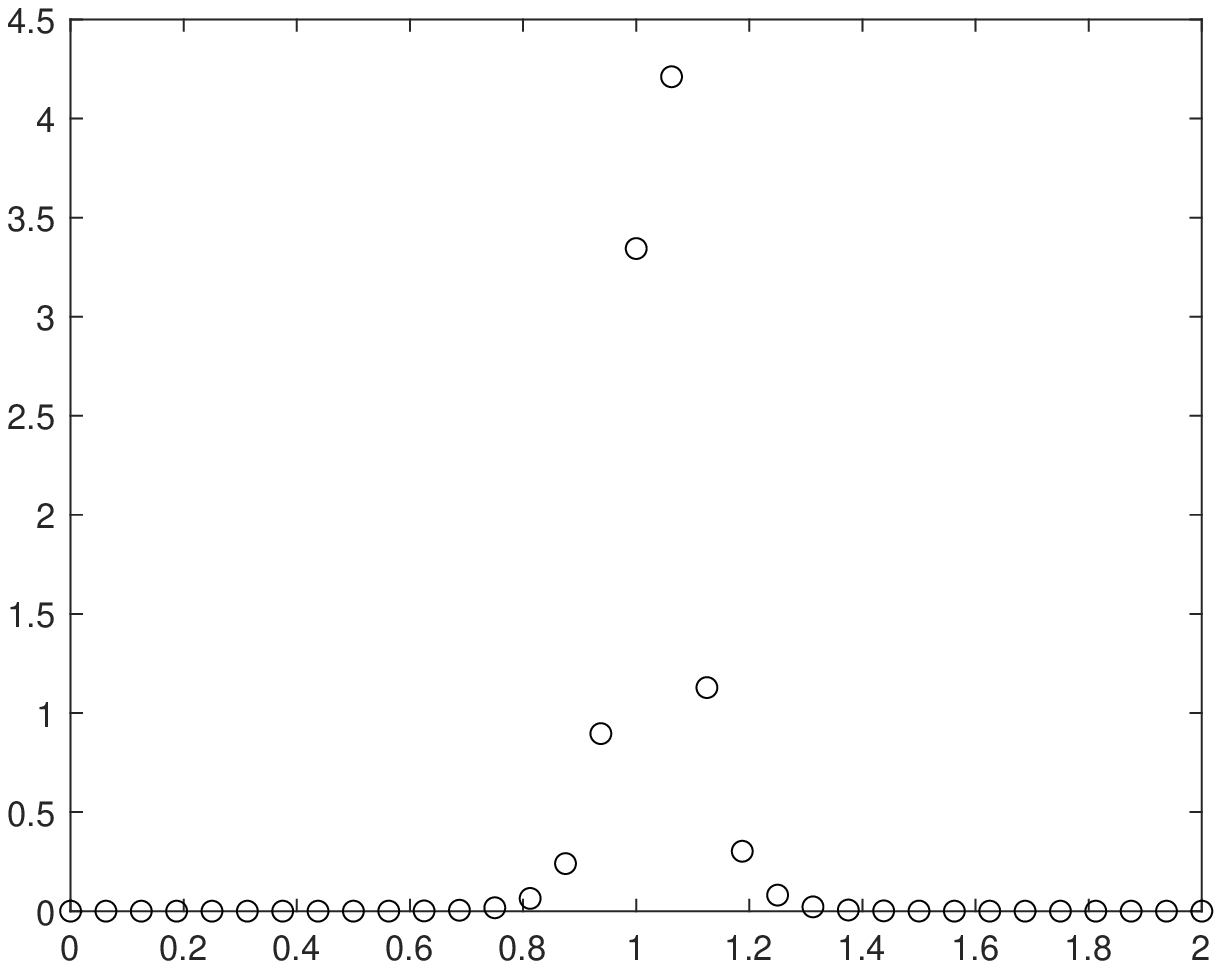}\\
\begin{rotate}{90}{$\quad \,\,\,\,\,\,\,\,\bO_{B}=\bO_{AY}$}\end{rotate}
& \includegraphics[width=6.0cm,height=4cm]{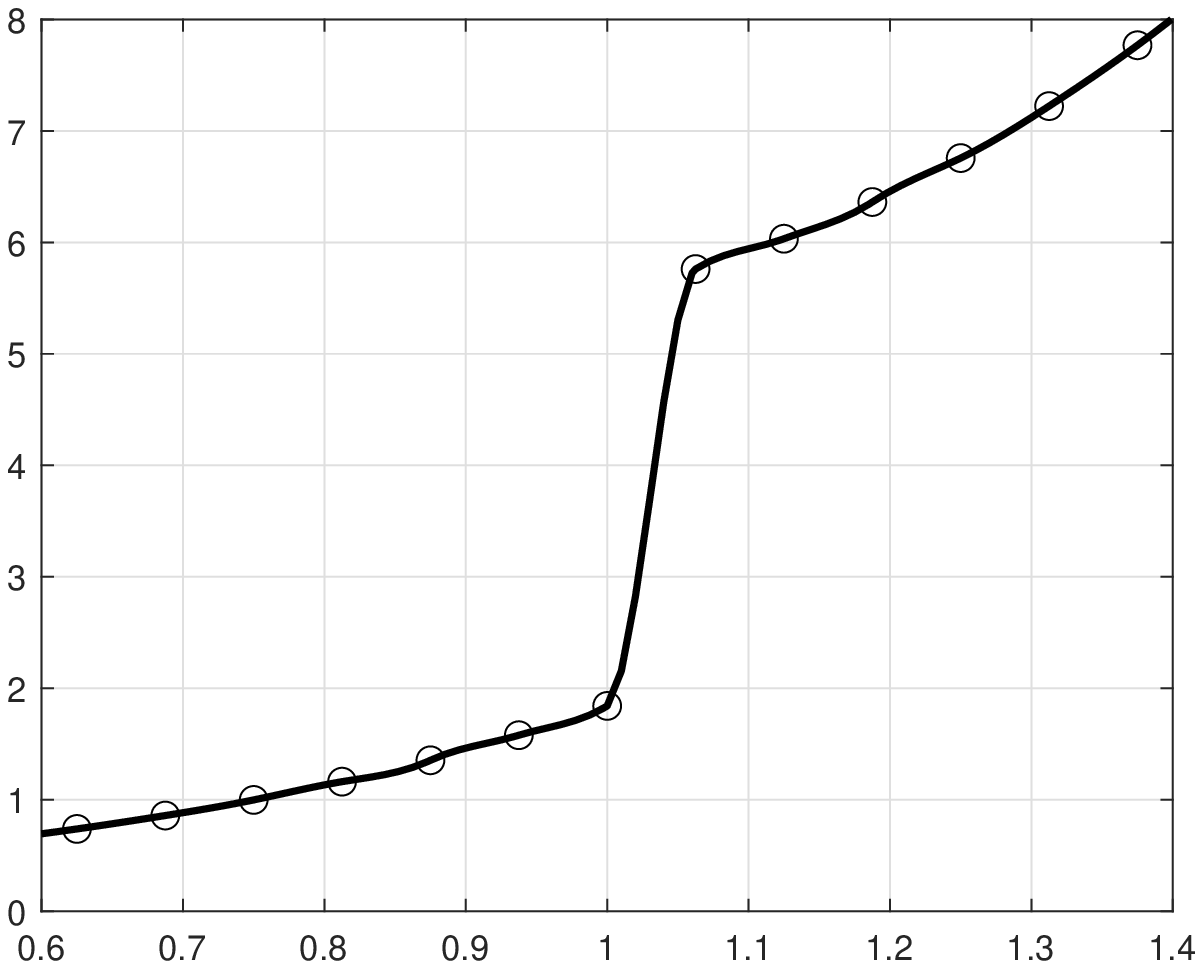} & \includegraphics[width=6.0cm,height=4cm]{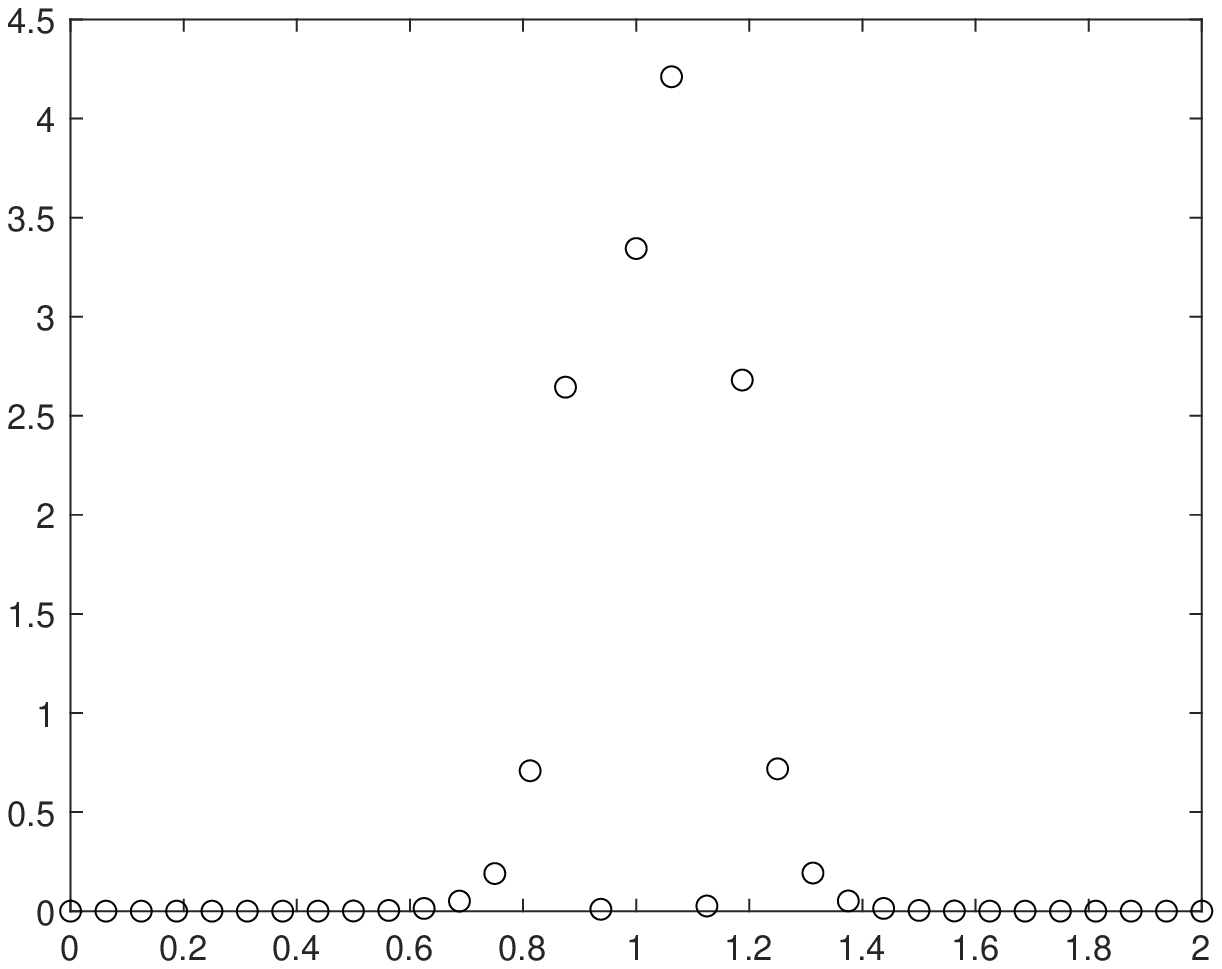} \\
\end{tabular}
%\\
\end{center}
\caption{Experiment 2. $l=4$ (a) Reconstructions using the different methods, (b) $|g'(x_i)-\dot g^k_i|$, $i=0,\hdots,32$.
  }
    \label{fig:exp2}
 \end{figure}

 As expected, the maximum order is not obtained. According to Prop. \ref{prop3}, we consider a window
	$W_4=\{i\,:\,1\leq i\leq \tilde l_0 \,\,\text{or} \,\, \tilde l_1\leq i\leq 2^{l+1}-1\}$ for
\begin{equation*}
\begin{split}
\tilde l_0&=(i_0-1)+r\log_2(\hat{h})=(2^l-1)+r\log_2(2^{-l})=2^l-r\cdot l-1,\\
\tilde l_1&=(i_0+2)-r\log_2(\hat{h})=(2^l+2)-r\log_2(2^{-l})=2^l+r\cdot l+2,\\
\end{split}
\end{equation*}
and $r=2$.  The results corresponding to this window are shown in Table \ref{tab:a5}. The accuracy orders for $\bO$ and $\bS$ coincide, whilst for $\bR$ the order is determined by the method used in the derivative computation.
 \begin{table}[H]
  \begin{center}
   \begin{tabular}{|c|c|c|c|c|c|} \hline
   $h$   & $\bS$ &  $\bR_{FB}$  &  $\bO_{FB}$ & $\bR_{B}=\bR_{AY}$  &  $\bO_{B}=\bO_{AY}$   \\
   \hline
   $3.125e{-}2$  &$2.7961$  &     $ 3.7667$& $2.7961$ &$ 3.9049$ &$2.7961$ \\
   $1.562e{-}2$  &$2.7980$  &     $ 3.6828$& $2.7980$ &$ 3.6906$ &$2.7980$ \\
   $7.812e{-}3$  &$2.7990$  &     $ 3.7397$& $2.7990$ &$ 3.7452$ &$2.7990$ \\
   $3.906e{-}3$  &$2.7995$  &     $ 3.7695$& $2.7995$ &$ 3.7727$ &$2.7995$ \\
   \hline
   \end{tabular}
  \end{center}
   \caption{Experiment 2 with $h=2^{-l}$ (equally-spaced grid) and estimated orders
     $\log_2(e^{W_4}_l/e^{W_4}_{l-1})$,    $5\leq l \leq 8$,
$W_4=\{i\,:\,1\leq i\leq \tilde l_0 \,\,\text{or} \,\, \tilde l_1\leq i\leq 2^{l+1}-1\}$.}
      \label{tab:a5}
\end{table}

\subsubsection{Experiments with  non-uniform grids}\label{expsnonuniform}
In order to check  the theoretical results in non-uniform grids, we consider the same functions as in subsection \ref{expsnonuniform} but using a different discretization.

\emph{Experiment 1 with a non-uniform grid} In this experiment we discretize the function $f(x)$ in \eqref{eq:exp1} at the interval $[0,2]$ using a non-uniform grid constructed according to the following procedure: Let $l$ be a fixed positive integer, we define the points as:
\begin{equation}\label{malladonouniforme}
\left\{
  \begin{array}{ll}
    x^l_{2i}=2^{-l}\cdot i, \\
    x^l_{2i+1}=2^{-l}(i+\frac 14),  \\
  \end{array}
\right.
\end{equation}
with $i=0,\hdots,2^{l+1}-1$ and $x^l_{2^{l+1}}=2$. It is clear that $\hat h=\frac{3}{4} 2^{-l}.$ As in the case of uniform grids, for $\bR$ and $\bO$ methods the value of the derivative computed by \ref{primersistema} at the point $\dot f_{2^{l}}$ is replaced by a new value computed using methods $FB$, $B$ and $AY$. In this case the node immediately at the right of the discontinuity, $x^l_{2^l+1}$, does not coincide with any node in the grid corresponding to $l+1$, but
	with the node $x^{l+2}_{2^{l+2}+2}$, belonging to the grid corresponding to $l+2$. Indeed:
$$
	x^l_{2^l+1} = 2^{-l} \left(2^{l-1}+\frac 14\right) =
	  2^{-l} \left(\frac{2^{l+1}+1}{4}\right)
	  =	  2^{-(l+2)} \left(2^{l+1}+1\right) =
	  x^{l+2}_{2(2^{l+1}+1)} = x^{l+2}_{2^{l+2}+2}.
	 $$	
	Taking it into account, in order to estimate the order of accuracy of the approximation we use the following formula:

$$\tilde o^W=\log_4\left(\frac{e^W_{l}}{e^W_{l-2}}\right).$$
We take the following windows, similar to the ones defined in Experiment 1 for uniform grids, i.e.:
\begin{equation*}
\begin{split}
&W^n_1=\{i\,:\,0\leq i\leq  2^{l+2}\},\\
&W^n_2=W^n_1\setminus\{i_0\},\\
&W^n_3=\{i\,:\,0\leq i\leq l^n_0 \,\,\text{or} \,\, l^n_1\leq i\leq 2^{l+2}\},
\end{split}
\end{equation*}
being
\begin{equation}\label{equationcotas}
\begin{split}
&l^n_0=(i_0-1)+\log_2(\hat{h})=(2^{l+1}-1)+\log_2\left(\frac{3}{4} 2^{-l}\right),\\
&l^n_1=(i_0+1)-\log_2(\hat{h})=(2^{l+1}+1)-\log_2\left(\frac{3}{4} 2^{-l}\right).\\
\end{split}
\end{equation}
It can be seen  in Tables \ref{tab:a6}, \ref{tab:a7} and \ref{tab:a8} that the orders of approximation obtained for $\bR_B$, $\bO_B$ and $\bR_{AY}$, $\bO_{AY}$ are different and are in accordance with Lemma \ref{teo:brod} and  Prop. \ref{teo:arandigayanez}. For the $\bS$ method the expected order, $O(\hat h^3)$ is obtained.

\begin{table}[H]
  \begin{center}
   \begin{tabular}{|c|c|c|c|c|c|c|c|} \hline
   $\hat h$   & $\bS$ &  $\bR_{FB}$  &  $\bO_{FB}$ & $\bR_{B}$ &$\bR_{AY}$   &  $\bO_{B}$ & $\bO_{AY}$    \\
   \hline
   $    9.3750e-02$  &$        2.9903$  & $      1.1996 $ & $    1.1996$&$     1.0888$ &$     1.9675$&$1.0888$&$      1.9675$\\
   $    2.3438e-02$  &$        2.9999$  & $      1.0791 $ & $    1.0791$&$     1.0385$ &$     2.0074$&$ 1.0385$&$     2.0074$\\
   $    5.8594e-03$  &$        3.0000$  & $      1.0220 $ & $    1.0220$&$     1.0107$ &$     2.0027$&$ 1.0107$&$     2.0027$\\
   $    1.4648e-03$  &$        2.9978$  & $      1.0057 $ & $    1.0057$&$     1.0027$ &$     2.0007$&$ 1.0027$&$     2.0007$\\
   \hline
   \end{tabular}
  \end{center}
   \caption{Experiment 1 with $\hat h=\frac 34 2^{-l}$ (non-equally-spaced grid) and estimated orders
     $\log_4(e^{W^n_1}_{l}/e^{W^n_1}_{l-2})$,    $3\leq l \leq 9$,
$W^n_1=\{i\,:\,0\leq i\leq 2^{l+2}\}$.}
      \label{tab:a6}
\end{table}

If the window is changed by $W^n_2$ so as to avoid the point where the derivative was replaced,
 the order increases for $\bO$ methods, in agreement with Prop. \ref{propo:maximoorden}, and these methods achieve the maximum order as they coincide with the $\bS$ method in the points of the window, see Table \ref{tab:a7}. However, the order is not improved using $\bR$ methods because the size of the window is not sufficiently large.
\begin{table}[H]
  \begin{center}
   \begin{tabular}{|c|c|c|c|c|c|c|c|} \hline
   $\hat h$   & $\bS$ &  $\bR_{FB}$  &  $\bO_{FB}$ & $\bR_{B}$ &$\bR_{AY}$  &  $\bO_{B}$ & $\bO_{AY}$    \\
   \hline
   $    9.3750e-02$  &$        2.9903$  & $        1.1623 $ & $   2.9903$&$         1.0653$ &$       1.9190$&$    2.9903$&$      2.9903$\\
   $    2.3438e-02$  &$        2.9999$  & $        1.0760 $ & $   2.9999$&$         1.0368$ &$       1.9964$&$    2.9999$&$     2.9999$\\
   $    5.8594e-03$  &$        3.0000$  & $        1.0218 $ & $   3.0000$&$         1.0106$ &$       2.0000$&$    3.0000$&$     3.0000$\\
   $    1.4648e-03$  &$        2.9978$  & $        1.0056 $ & $   2.9978$&$         1.0027$ &$       2.0000$&$    2.9978$&$     2.9978$\\
   \hline
   \end{tabular}
  \end{center}
   \caption{Experiment 1 with $\hat h=\frac342^{-l}$  (non-equally-spaced grid)  and estimated orders
     $\log_4(e^{W^n_2}_l/e^{W^n_2}_{l-2})$,    $3\leq l \leq 9$,
$W^n_2=W^n_1\setminus\{i_0\}$.}
      \label{tab:a7}
\end{table}

When the window
considered for the estimation of the order is $W^n_3$, i.e., some more  points around $i_0$ are excluded from the order estimation according to Props. \ref{propo1} and \ref{propo2}, the order of accuracy obtained in the  points in the window is optimal using any method. Even when the $FB$ method is used the order obtained is 3, instead of the expected second order (Cor. \ref{coro1})). The reason can be either the regularity of the function used in this example or the fact that   the values $l^n_0$ and $l^n_1$ taking from Eq. \eqref{equationcotas} are too large.

 \begin{table}[H]
  \begin{center}
   \begin{tabular}{|c|c|c|c|c|c|c|c|} \hline
   $\hat h$   & $\bS$ &  $\bR_{FB}$  &  $\bO_{FB}$ & $\bR_{B}$ &$\bR_{AY}$   &  $\bO_{B}$ & $\bO_{AY}$   \\
   \hline
   $    9.3750e-02$  &$        2.9903$  & $         2.3242 $ & $      2.8853$&$     2.0628$ &$      2.4435$&$    2.8853$&$         2.8853$\\
   $    2.3438e-02$  &$        2.9999$  & $         2.9997 $ & $      2.9999$&$     2.9995$ &$      2.9998$&$    2.9999$&$         2.9999$\\
   $    5.8594e-03$  &$        3.0000$  & $         3.0000 $ & $      3.0000$&$     3.0000$ &$      3.0000$&$    3.0000$&$         3.0000$\\
   $    1.4648e-03$  &$        2.9978$  & $         2.9978 $ & $      2.9978$&$     2.9978$ &$      2.9978$&$    2.9978$&$         2.9978$\\
   \hline
   \end{tabular}
  \end{center}
   \caption{Experiment 1 with $\hat h=\frac342^{-l}$ (non-equally-spaced grid) and estimated orders
     $\log_4(e^{W^n_3}_l/e^{W^n_3}_{l-2})$,    $3\leq l \leq 9$,
$W^n_3=\{i\,:\,0\leq i\leq l^n_0 \,\,\text{or} \,\, l^n_1\leq i\leq 2^{l+2}\}$.}
      \label{tab:a8}
\end{table}

\subsubsection{Experiment 2 with a non-uniform grid}

In order to analyze a function with a strong gradient, in this subsection, we discretize $g(x)$, Eq. \eqref{eq:exp2},  at the interval $[0,2]$ using the non-uniform grid defined by Eq. \eqref{malladonouniforme}.
We take the following window:
\begin{equation*}
\tilde W^n_3=W^n_3\setminus\{l^n_1\}.
\end{equation*}

In Table \ref{tab:a9} we can see that
		the order of the $\bO$ methods is one, due to the presence of the discontinuity, as the hypothesis of Prop. \ref{teo:arandigayanez} are not satisfied. For $\bR$ methods we obtain an improvement in the order but not the optimal. In order to get it we take the following window:
\begin{equation*}
W^n_4=\{i\,:\,0\leq i\leq \tilde l^n_0 \,\,\text{or} \,\, \tilde l^n_1\leq i\leq 2^{l+2}\},
\end{equation*}
with
\begin{equation*}
\begin{split}
&\tilde l^n_0=(i_0-1)+2\log_2(\hat{h})=(2^{l+1}-1)+2\log_2\left(\frac342^{-l}\right),\\
&\tilde l^n_1=(i_0+1)-2\log_2(\hat{h})=(2^{l+1}+1)-2\log_2\left(\frac342^{-l}\right).\\
\end{split}
\end{equation*}

\begin{table}[H]
  \begin{center}
   \begin{tabular}{|c|c|c|c|c|c|c|c|} \hline
   $\hat h$   & $\bS$ &  $\bR_{FB}$  &  $\bO_{FB}$ & $\bR_{B}$ &$\bR_{AY}$   &  $\bO_{B}$ & $\bO_{AY}$   \\
   \hline
    $    9.3750e-02$  &$       0.9750$  & $       1.4998 $ & $       0.9750$&$       1.4742$ &$       1.5462$&$        0.9750$&$    0.9750$\\
    $    2.3438e-02$  &$       1.0791$  & $       2.0050 $ & $       1.0791$&$       2.0231$ &$       2.0569$&$        1.0791$&$    1.0791$\\
    $    5.8594e-03$  &$       1.0823$  & $       2.0514 $ & $       1.0823$&$       2.0539$ &$       2.0647$&$        1.0823$&$    1.0823$\\
    $    1.4648e-03$  &$       1.0827$  & $       2.0749 $ & $       1.0827$&$       2.0757$ &$       2.0786$&$        1.0827$&$    1.0827$\\
   \hline
   \end{tabular}
  \end{center}
   \caption{Experiment 2 with $\hat h=\frac 34 2^{-l}$ (non-equally-spaced grid) and estimated orders
     $\log_4(e^{\tilde W^n_3}_l/e^{\tilde W^n_3}_{l-2})$,    $5\leq l \leq 8$,
$\tilde W^n_3=W^n_3\setminus\{l^n_1\}$.}
      \label{tab:a9}
\end{table}

The results corresponding to this setup are shown in  Table \ref{tab:a10}.
Third order of accuracy is obtained for all methods, in agreement with the theoretical results.

As a conclusion, the kind of reconstructions proposed in the paper allow for replacing the approximations of the derivatives in some points, in order to ensure monotonicity preservation while maintaining optimal order at points that are located at a certain distance  from them.

\begin{table}[H]
  \begin{center}
   \begin{tabular}{|c|c|c|c|c|c|c|c|} \hline
   $\hat h$   & $\bS$ &  $\bR_{FB}$  &  $\bO_{FB}$ & $\bR_{B}$ &$\bR_{AY}$  &  $\bO_{B}$ & $\bO_{AY}$   \\
   \hline
    $    4.6875e-02$ &$           3.1187$  & $        3.1856  $   &   $    3.1187 $ & $     3.1210$&$    3.1391$ &$      3.1187$&$      3.1187$\\
    $    1.1719e-02$ &$           3.1291$  & $        3.0872  $   &   $    3.1291 $ & $     3.0615$&$    3.0616$ &$      3.1291$&$      3.1291$\\
    $    2.9297e-03$ &$           3.1248$  & $        3.0209  $   &   $    3.1248 $ & $     3.0143$&$    3.0139$ &$      3.1248$&$      3.1248$\\
    $    7.3242e-04$ &$           3.1166$  & $        2.9547  $   &   $    3.1166 $ & $     2.9530$&$    2.9528$ &$      3.1166$&$      3.1166$\\
   \hline
   \end{tabular}
  \end{center}
   \caption{Experiment 2 with $\hat h=\frac342^{-l}$ (non-equally-spaced grid) and estimated orders
     $\log_4(e^{ W^n_4}_l/e^{ W^n_4}_{l-2})$,    $4\leq l \leq 10$,
$W^n_4=\{i\,:\,0\leq i\leq \tilde l^n_0 \,\,\text{or} \,\, \tilde l^n_1\leq i\leq 2^{l+2}\}$.}
      \label{tab:a10}
\end{table}

\subsection{Monotonicity}\label{sec:monotonicity}
In this section we show an example, indicated as experiment 3 (Table \ref{tab:ex12}), in which we compare the reconstructions obtained with the different methods considered in this paper.
%In this section we perform an example, indicated as experiment 3, to compare the interpolants obtained by the various formulas considered in the paper.
These data have been used in  \cite{FrCar}.
In this example  we only know the values of the function at certain nodes.
%only nodal values of the function to be reconstructed are known.
% The input data is shown in the table \ref{tab:ex12} and corresponds to a test case introduced in \cite{FrCar}.
The discretization is not equally spaced and, therefore,   methods $B$ and
$AY$ are different.
\begin{table}[H]
  \begin{center}
\begin{tabular}{|l||r|r|r|r|r|r|r|r|r|}
  \hline
  % after \\: \hline or \cline{col1-col2} \cline{col3-col4} ...
  $i$   & 1 & 2 & 3 & 4 & 5 & 6 & 7 & 8 & 9 \\\hline
  $x_i$ & $7.99$ & $8.09$ & $8.19$ & $8.7$ & $9.2$ & $10$ & $15$ & $12$ & $20$ \\
  $y_i$ & $0$ & $2.76429e{-}5$ & $4.37498e{-}2$ & $ 0.169183$ & $ 0.469428$ & $ 0.943740$ & $ 0.998636$ & $0.999919$ & $ 0.999994 $ \\
  \hline
\end{tabular}
    \caption{Data for the experiment 3}
    \label{tab:ex12}
  \end{center}
\end{table}

%\begin{figure}[!bpt]
%\begin{center}
%\begin{tabular}{cc}
%  %(a) \hspace{5.8cm} (b) \\
%\includegraphics[width=7.0cm,height=5.5cm]{}&\includegraphics[width=7.0cm,height=5.5cm]{} \\
%\textbf{a)} $\bS$, $\bR_{FB}$, $\bR_{AY}$ and $\bR_{B}$  & \textbf{b)} $\bS$, $\bO_{FB}$, $\bO_{AY}$ and $\bO_{B}$ \\
%\includegraphics[width=7.0cm,height=5.5cm]{}&\includegraphics[width=7.0cm,height=5.5cm]{} \\
%\textbf{c)} $\bS$, $\bR_{AY}$ and $\bO_{AY}$ & \textbf{d)} $\bS$, $\bR_{B}$ and $\bO_{B}$ \\
%\multicolumn{2}{c}{\includegraphics[width=7.0cm,height=5.5cm]{}}\\
%\multicolumn{2}{c}{\textbf{e)} $\bS$, $\bR_{FB}$ and $\bO_{FB}$      }\\
%\end{tabular}
%\end{center}
%\caption{Experiment 3. Reconstructions for Experiment 3
%  using the different methods. {\color{blue} Blue} dashed line: $\bS$; solid lines: $B$ method; dotted lines: $AY$ method; dotdashed lines: $FB$ method. Black lines: $\bR$ methods; Red lines: $\bO$ methods.}
%    \label{fig:exp3}
% \end{figure}

%\toni{{\color{verd} En la figura 2 Creo que podríamos poner $\bS$ en otro color (azul por ejemplo) para diferenciarlo de $\bR$}}{\bf Dioni: Hecho}

The algorithm modifies the values at the nodes
$i=2,6,7,8$. In this case, we obtain monotone reconstruction in the complete interval (Figure \ref{fig:exp4}). The results are very similar in all cases, i.e., both methods produce similar monotone curves because the values which are modified are the most relevant in this example.

\begin{figure}[!bpt]
\begin{center}
\begin{tabular}{cccc}
  %(a) \hspace{5.8cm} (b) \\
\includegraphics[width=4.0cm,height=3.5cm]{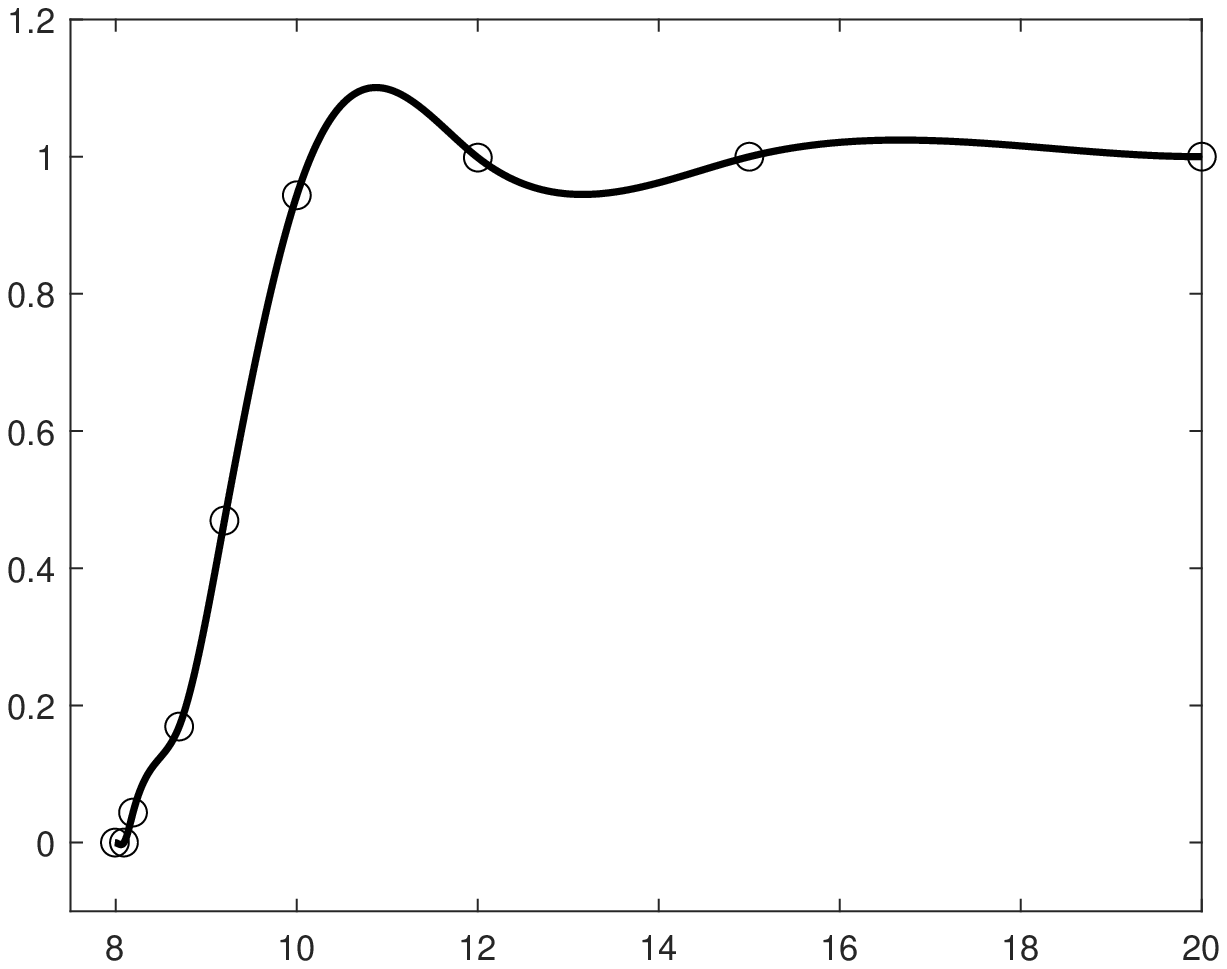}&\includegraphics[width=4.0cm,height=3.5cm]{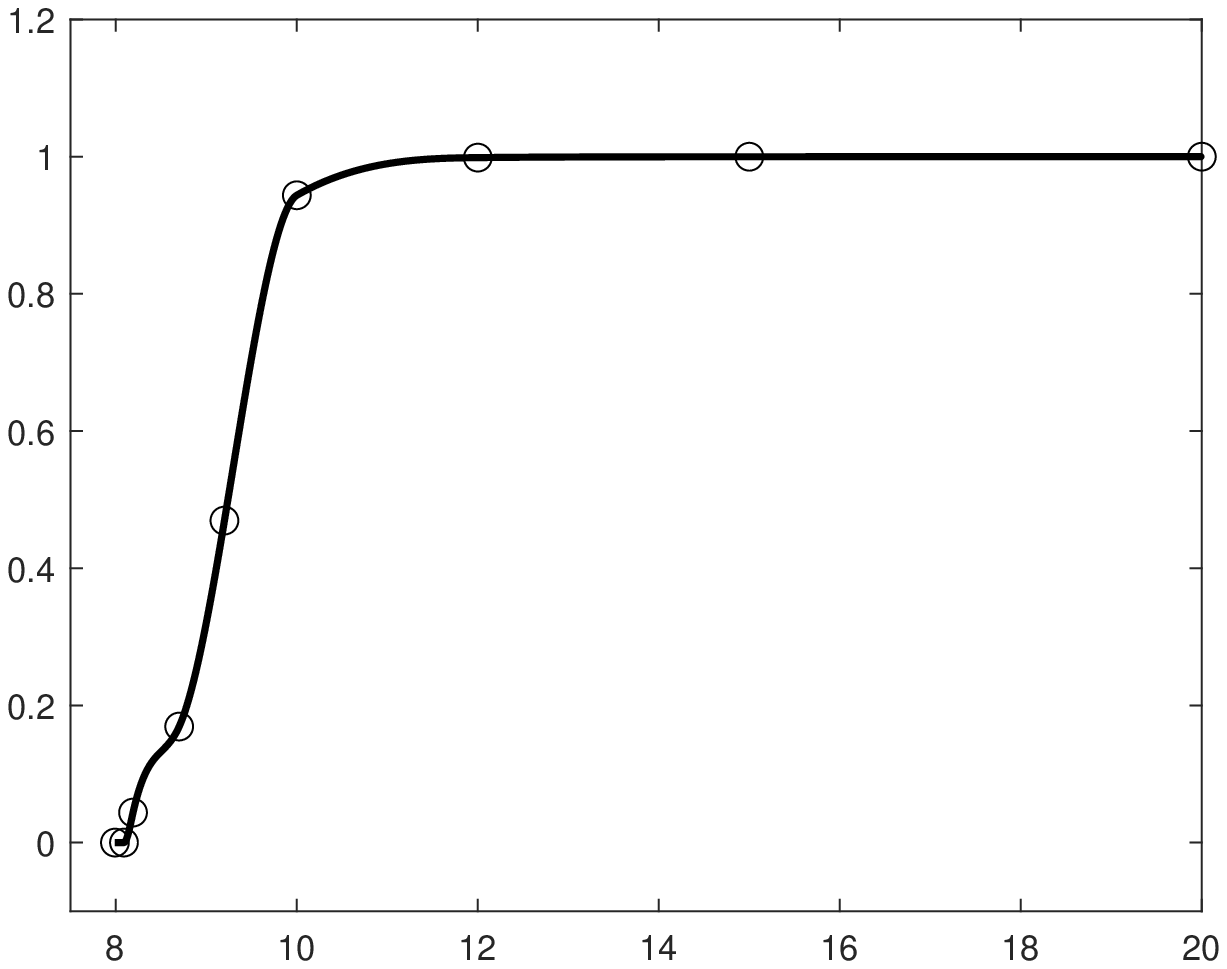} &\includegraphics[width=4.0cm,height=3.5cm]{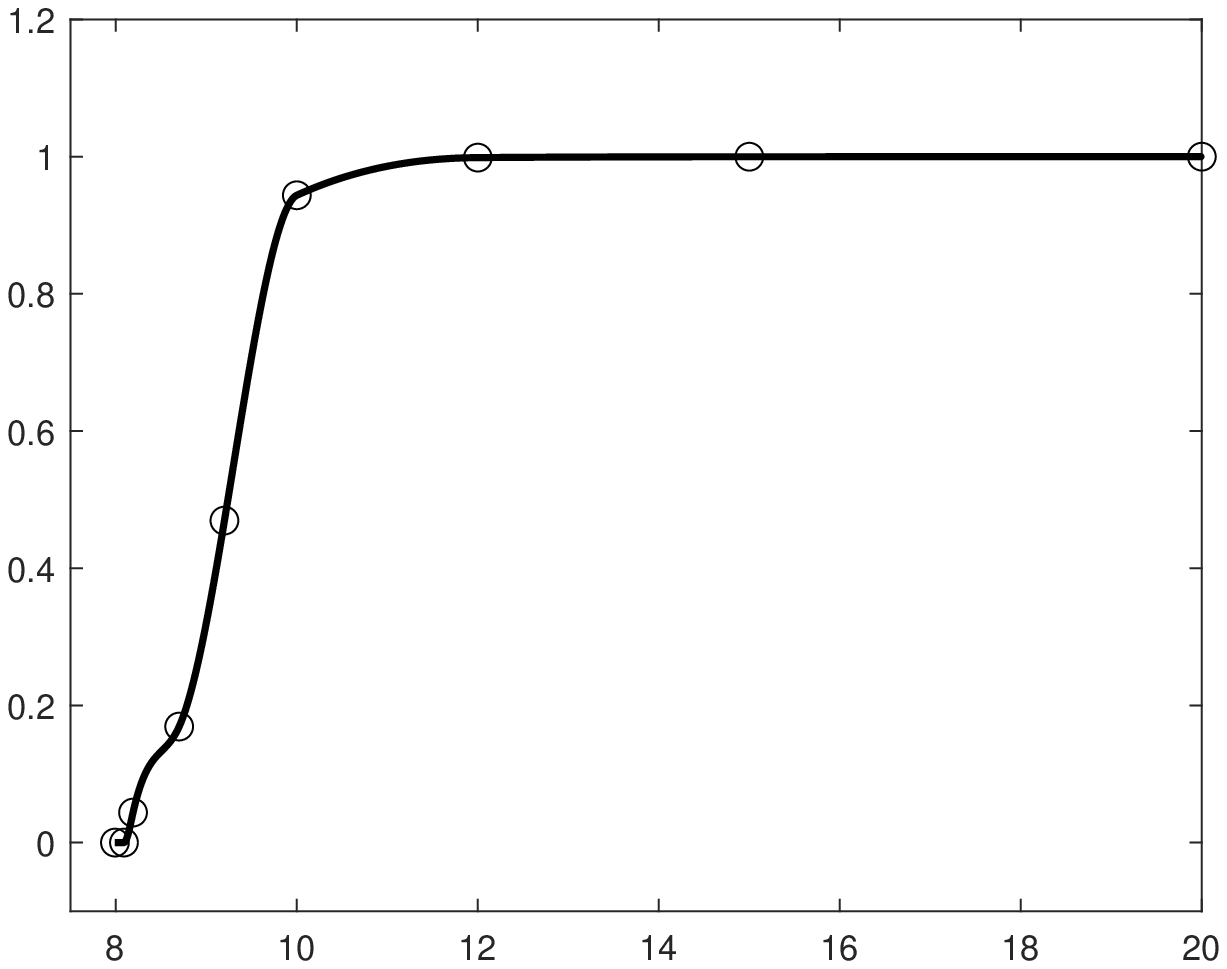} & \includegraphics[width=4.0cm,height=3.5cm]{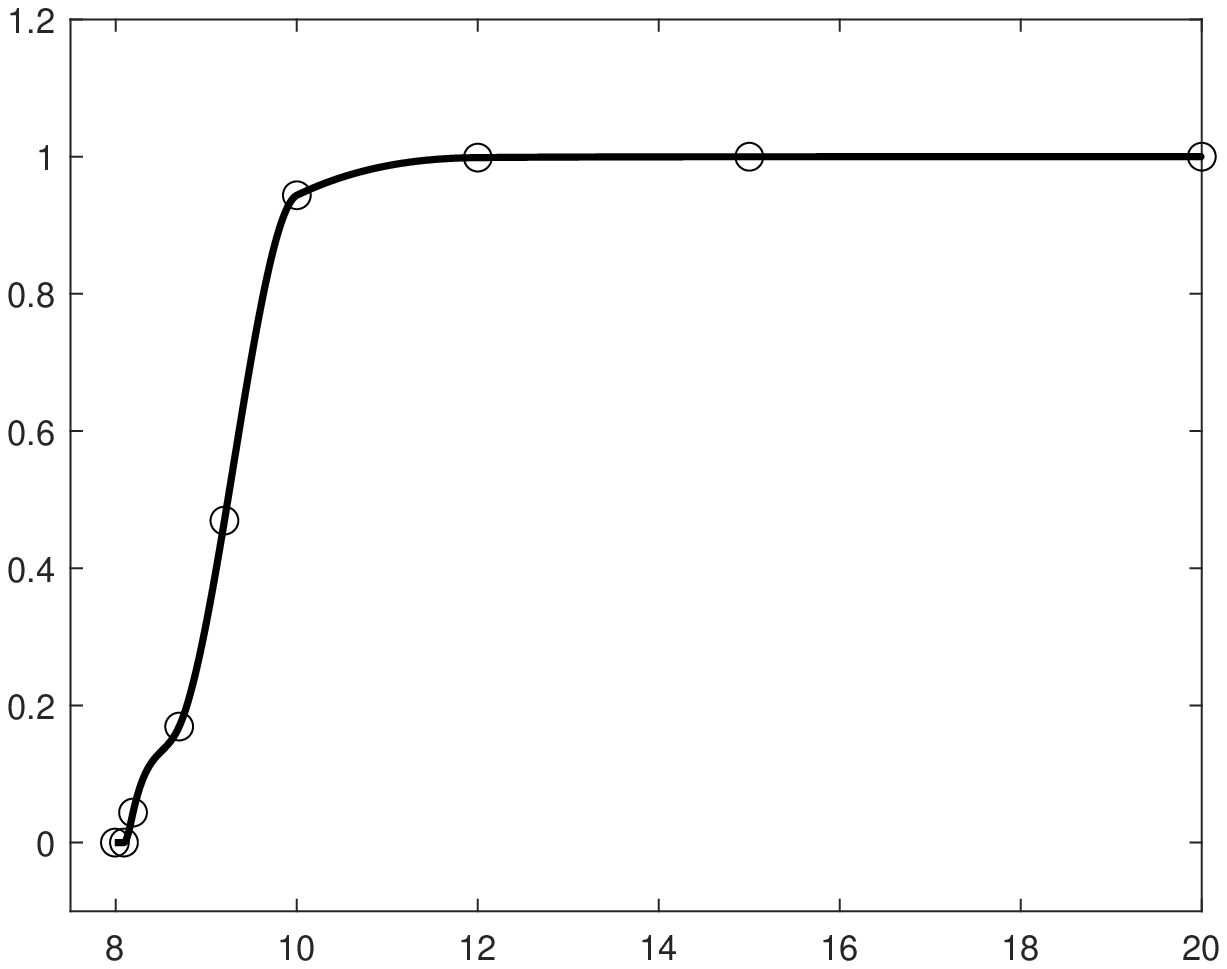} \\
$\bS$& $\bR_{FB}$ & $\bR_{B}$ &$\bR_{AY}$\\
\end{tabular}
\end{center}
\caption{Reconstructions obtained for Experiment 3 using the different methods}
    \label{fig:exp4}
 \end{figure}

\section{Conclusions}\label{sec:conclusion}

In this work, we have introduced two new algorithms to obtain monotone cubic spline
	interpolants. We have considered the case in which there exists a discontinuity or a high gradient in the data, and considered two options, based on modifying the approximation to the derivatives in the points where monotonicity constraints are violated.
	In the first one we rewrite the spline system fixing the modified derivatives and recompute the derivatives using the modified spline system at both sides. Using this algorithm the regularity is $C^2$ in all points except at the ones where the derivative approximation was modified, and the order is reduced in a neighborhood of these, but it is conserved in the rest of the interval. In the second algorithm we again modify the approximations of the derivatives where required but the rest of the values are kept as initially computed. In this case we conserve the order in all points but the regularity is lost in a neighborhood of the points where the derivative was modified. Some numerical tests confirm these results.

\bibliographystyle{these}
%\bibliography{../tesis_bib}
%\end{document}

\end{document}